\title[ ]{Multi-agent system for target tracking on a sphere and its asymptotic behavior}
\author[Sun-Ho Choi]{Sun-Ho Choi}
\address[Sun-Ho Choi]{Department of Applied Mathematics and the Institute of Natural Sciences, Kyung Hee University, 1732 Deogyeong-daero, Giheung-gu, Yongin 17104, Republic of Korea}
\email{sunhochoi@khu.ac.kr}
\author[Dohyun Kwon]{Dohyun Kwon}
\address[Dohyun Kwon]{Department of Mathematics, University of Wisconsin-Madison, 480 Lincoln Dr., Madison, WI 53706, USA}
\email{dkwon7@wisc.edu}
\author[Hyowon Seo]{Hyowon Seo}
\address[Hyowon Seo]{Department of Applied Mathematics and the Institute of Natural Sciences, Kyung Hee University, 1732 Deogyeong-daero, Giheung-gu, Yongin 17104, Republic of Korea}
\email{hyowseo@gmail.com}
\begin{document}

\newtheorem{theorem}{Theorem} [section]
\newtheorem{maintheorem}{Theorem}
\newtheorem{lemma}[theorem]{Lemma}
\newtheorem{proposition}[theorem]{Proposition}
\newtheorem{remark}[theorem]{Remark}
\newtheorem{example}[theorem]{Example}
\newtheorem{exercise}{Exercise}
\newtheorem{definition}{Definition}[section]
\newtheorem{corollary}[theorem]{Corollary}

\def\A{{\mathcal A}}
\newcommand{\noi}{\noindent}
\newcommand{\Z}{\mathbb{Z}}
\newcommand{\R}{\mathbb{R}}
\newcommand{\C}{\mathbb{C}}
\newcommand{\T}{\mathbb{T}}
\newcommand{\bul}{\bullet}
\newcommand{\E}{\mathcal{E}}
\newcommand{\N}{\mathcal{N}}
\newcommand{\RR}{\mathcal{R}}
\newcommand{\D}{\mathcal{D}}
\newcommand{\HH}{\mathcal{H}}

\newcommand{\al}{\alpha}
\newcommand{\dl}{\delta}
\newcommand{\Dl}{\Delta}
\newcommand{\eps}{\varepsilon}
\newcommand{\kk}{\kappa}
\newcommand{\g}{\gamma}
\newcommand{\G}{\Gamma}
\newcommand{\ld}{\lambda}
\newcommand{\lam}{\lambda}
\newcommand{\Ld}{\Lambda}
\newcommand{\s}{\sigma}
\newcommand{\ft}{\widehat}
\newcommand{\wt}{\widetilde}
\newcommand{\cj}{\overline}
\newcommand{\dx}{\partial_x}
\newcommand{\dt}{\partial_t}
\newcommand{\dd}{\partial}
\newcommand{\invft}[1]{\overset{\vee}{#1}}
\newcommand{\lrarrow}{\leftrightarrow}
\newcommand{\embeds}{\hookrightarrow}
\newcommand{\LRA}{\Longrightarrow}
\newcommand{\LLA}{\Longleftarrow}

\newcommand{\wto}{\rightharpoonup}

\newcommand{\jb}[1]
{\langle #1 \rangle}

\newcommand{\dk}[1]{{\color{blue}[#1]}}

\newcommand{\e}{\varepsilon} %
\newcommand{\rotji}{{R_{z_j \rightarrow z_i}}}
\newcommand{\rot}{{R_{z_1 \rightarrow z_2}}}
\newcommand{\rott}{{R_{z_1 \rightarrow z_2}^T}}
\newcommand{\roti}{{R_{z_1 \rightarrow z_2}^{-1}}}
\newcommand{\rotr}{{R_{z_2 \rightarrow z_1}}}

\newcommand{\vii}{{\{v_i\}_{\idn}}}
\newcommand{\ro}{{R_{\cdot \rightarrow \cdot}}}
\newcommand{\rji}{{R_{x_j \rightarrow x_i}}}
\newcommand{\rij}{{R_{x_i \rightarrow x_j}}}
\newcommand{\rjih}{{R_{x_j \rightarrow \frac{x_i+x_j}{|x_i+x_i|}}}}
\newcommand{\rijh}{{R_{x_i \rightarrow \frac{x_i+x_j}{|x_j+x_i|}}}}

\newcommand{\hR}{{P}}
\newcommand{\hro}{{\hR_{\cdot \rightarrow \cdot}}}
\newcommand{\hrji}{{\hR_{x_j \rightarrow x_i}}}
\newcommand{\hrij}{{\hR_{x_i \rightarrow x_j}}}
\newcommand{\hrki}{{\hR_{x_k \rightarrow x_i}}}
\newcommand{\zc}{{z_1\times z_2}}
\newcommand{\xvi}{{\{(x_i, v_i )\}_{\idn}}}
\newcommand{\idn}{{1 \leq i \leq N}}
\newcommand{\xii}{{\{x_i\}_{\idn}}}
\renewcommand{\theequation}{\thesection.\arabic{equation}}
\renewcommand{\thetheorem}{\thesection.\arabic{theorem}}
\renewcommand{\thelemma}{\thesection.\arabic{lemma}}
\newcommand{\bbr}{\mathbb R}
\newcommand{\bbz}{\mathbb Z}
\newcommand{\bbn}{\mathbb N}
\newcommand{\bbs}{\mathbb S}
\newcommand{\bbp}{\mathbb P}
\newcommand{\ddiv}{\textrm{div}}
\newcommand{\bn}{\bf n}
\newcommand{\rr}[1]{\rho_{{#1}}}
\newcommand{\thh}{\theta}
\def\charf {\mbox{{\text 1}\kern-.24em {\text l}}}
\renewcommand{\arraystretch}{1.5}

\thanks{
}

\begin{abstract}
We propose a second-order multi-agent system for target tracking on a sphere. The model contains a centripetal force, a bonding force, a velocity alignment operator to the target, and cooperative control between flocking agents. We propose an appropriate regularized rotation operator instead of Rodrigues' rotation operator to derive the velocity alignment operator for target tracking. By the regularized rotation operator, we can decompose the phase of agents into translational and structural parts. By analyzing the translational part of this reference frame decomposition, we can obtain rendezvous results to the given target. If the multi-agent system can obtain the target's position, velocity, and acceleration vectors, then the complete rendezvous occurs. Even in the absence of the target's acceleration information, if the coefficients are sufficiently large enough, then the practical rendezvous occurs.\end{abstract}

\maketitle


%
%
\section{Introduction}\label{sec1}
\setcounter{equation}{0}
Target tracking refers to designing a dynamical system that agents follow given maneuvering target agents using the information of the targets, such as position, velocity, and acceleration. The target tracking problem is applied in various fields, such as mobile sensor networks, virtual reality, and surveillance systems using unmanned aerial vehicles (UAVs) \cite{S-B, X-Z-S, Y-Y-J}.  Most of the relevant literature focuses on the uncertainty of target motions. From a technical point of view, we can divide the models for this field into measurement models, target motion models, and filtering models.  The measurement model deals with target information in a sensor coordinate containing additive noise such as image sensors and radar sensor networks \cite{B2,B1,S-S-D}. The target motion model is a coupled dynamical system for target tracking. The filtering model is based on the particle filter method and stochastic frameworks estimating the target state such as nonlinear filtering \cite{H-H,L-J} and adaptive filtering \cite{L-Z-C-Z-Z,M-C}.


Depending on the structure of the system, we also divide the models for target tracking into two types of systems:  single integrator model and double integrator model. For the single integrator model, one can control the velocity of the agents directly. For example, in \cite{D-S-A-Y}, the authors proposed a tracking algorithm for a slowly moving target using the target's position and bearing angle.  Many researchers assume agents can obtain only the target's position and bearing angle for targets maneuvering underwater. From the engineering point of view, it is a reasonable assumption. For the double integrator model, one can control the acceleration of agents. After Olfati-Saber’s seminal work \cite{Olfati}, researches for the dynamic tracking system using the double integrator model have been extensively conducted. For this kind of model,  the tracking agents can have the position and velocity information of the target. Moreover, to avoid collisions between agents or make a formation flight of the agents, a flocking algorithm and cooperative control are frequently used.

The domain or manifolds of agents are also one of the main topics in this field \cite{B-C-B-C,S-B} such as the surveillance system for the restricted area or target tracking system on the whole planet. Our goal is to provide a robust navigational feedback system for the target tracking problem on a sphere. Let $\gamma$-agent be a given target governed by the following system:
\begin{align}\begin{aligned}\label{target}
\dot q_\gamma&=p_\gamma,\\
\dot p_\gamma&=-\frac{\|p_\gamma\|^2}{\|q_\gamma\|^2} q_\gamma + U_\gamma(t),
\end{aligned}\end{align}
where  $q_\gamma\in \bbs^2$, $p_\gamma\in T_{q_\gamma}\bbs^2$, and  $U_\gamma$ are  the position, velocity, and control law   of the target agent ($\gamma$-agent) on sphere, respectively. To conserve the modulus of $q_\gamma(t)\in \bbs^2$, we additionally assume that the following condition holds for all $t\geq 0$.
\[q_\gamma(t) \perp U_\gamma(t).\]
Therefore, the control law $U_\gamma(t)$ has the following form: for some $u_\gamma(t)\in \bbr^3$,
\[U_\gamma(t)=\|q_\gamma(t)\|^2 u_\gamma(t)-\langle u_\gamma(t),q_\gamma(t)\rangle q_\gamma(t).\]
For simplicity, we assume that $u_\gamma(t)$ is continuous.

For a given $\gamma$-agent, we propose a novel multi-agent system for the target tracking on a spherical space:
\begin{align}
\begin{aligned}\label{main1}
\dot q_i(t)&=p_i(t),\\
\dot p_i(t)&=-\frac{\|  p_i\|^2}{\|q_i\|^2}  q_i + \sum_{j=1}^N\frac{\sigma_{ij}}{N}(\|  q_i\|^2   q_j - \langle   q_i,  q_j \rangle  q_i)\\
&\quad +c_q(\|  q_i\|^2   q_\gamma - \langle   q_i,  q_\gamma \rangle   q_i)+c_p(\hR_{q_\gamma \rightarrow q_i}(p_\gamma) -  p_i)+U_i,\end{aligned}
\end{align}
where $q_i\in \bbs^2$ and $p_i\in T_{q_i}\bbs^2$ are the position and  velocity of the $i$th agent, respectively. The first term on the right-hand side of the second equation is the centripetal force term to conserve the modulus of $q_i$. The second term $$\sum_{j=1}^N\frac{\sigma_{ij}}{N}(\|  q_i\|^2   q_j - \langle   q_i,  q_j \rangle  q_i)$$ is the cooperative control term between agents where the inter-particle force parameter is given by
\begin{align*}
\sigma_{ij}=\sigma(\|x_i-x_j\|^2).
\end{align*}
The next two terms, $c_q(\|  q_i\|^2   q_\gamma - \langle   q_i,  q_\gamma \rangle   q_i)$ and $c_p(\hR_{q_\gamma \rightarrow q_i}(p_\gamma) -  p_i)$, are the bonding force and a velocity alignment term between the target and the $i$th agent, respectively, where $c_q>0$ and $c_p>0$ are target tracking coefficients for the position and velocity, respectively. The last term $U_i$ is an extra control law based on the target's information, which  will be determined later in \eqref{U1} and \eqref{U2} for each purpose.

%

Throughout this paper, we assume the initial data satisfies the following admissible conditions on $\mathbb{S}^2$:
  \begin{align}\label{initial}\|q_i(0)\|= 1,\quad \langle p_i(0),q_i(0)\rangle=0,\quad \mbox{ for all $i\in \{1,\ldots,N\}$}.\end{align}

\begin{definition}For a given target $(q_\gamma(t),p_\gamma(t))$, let  $\{(q_i(t),p_i(t))\}_{i=1}^N $ be the solution  to \eqref{main1}. We define the two kinds of rendezvouses.
\begin{enumerate}
  \item  An asymptotic complete rendezvous occurs between the agents and the given target, if
\[\lim_{t\to\infty }\max_{1\leq i\leq N}\|q_i(t)-q_\gamma(t)\|=0.\]
  \item  An asymptotic practical rendezvous occurs between the agents and the given target, if
\[\lim_{c_q,c_p\to \infty}\lim_{t\to\infty }\max_{1\leq i\leq N}\|q_i(t)-q_\gamma(t)\|=0.\]
\end{enumerate}
\end{definition}
In what follows, we will show that our model contains  many robust properties, including the complete rendezvous.  Even in the absence of the target acceleration information, the practical rendezvous occurs when the coefficients are large enough.   In particular, we obtain a sharp estimate of the distance between the target and agents. There are many other papers on the dynamics on $\mathbb{S}^2$ as well as $\mathbb{R}^n$, but our asymptotic analysis including exponential convergence and practical rendezvous is new on the target tracking problem, to the best of our knowledge.


The derivation of our model is motivated by the decomposition property of flocking dynamics on a flat space. On a flat space, from  momentum conservation, the dynamics is represented by the composition of frame reference dynamics and local alignment dynamics as in \cite{Olfati}. In contrast to previous results in $\mathbb{R}^n$, it is hard to expect such a decomposition for the flocking model on $\mathbb{S}^2$. See Sections \ref{sec2} and \ref{sec3} for details. In particular, in our previous papers \cite{C-K-S,C-K-S1,C-K-S2}, we used Rodrigues' rotation operator $\ro$ to derive a flocking system on a sphere since Rodrigues' rotation operator $\ro$ is the most natural flocking operator. However, its composition is complex so that it is difficult to analyze. Moreover, it contains an unavoidable singularity at antipodal points due to its geometric characteristics. From this singularity, even though agents are located on $\mathbb{S}^2$, the vanishing point on the communication rate is necessary \cite{C-K-S}. Due to this difficulty, the target tracking problem on $\mathbb{S}^2$ has not been well understood.


We remove the singular term from the natural rotation operator $\ro$ to obtain \textit{a rotation operator in two dimensions}:
\begin{align}
\label{eqn:hrot}
\hR_{z_1 \rightarrow z_2} := \langle z_1, z_2\rangle  I + z_2 z_1^T - z_1 z_2^T,\quad  \hbox{ for $z_1$ and $z_2$ in a unit sphere. }
\end{align}
See also Appendix A for the motivation of the non-singularity rotation  operator $P$ and its properties. We will prove that its dynamics consists of the composition of the rigid motion part on $\mathbb{S}^2$ and the local alignment part. Using this property, we derive an $\mathbb{S}^2$-version of the reference frame decomposition in Proposition  \ref{prop3.2} and provide a sufficient condition to obtain a target tracking estimate between multiple agents $\{(q_i(t),p_i(t))\}_{i=1}^N $ and the given target $(q_\gamma(t),p_\gamma(t)) $. Moreover, by the regularity of the operator $\hR$, we can obtain the following global existence result.



\begin{maintheorem}
\label{thm0} Assume that for a  continuous function $u_\gamma$, a given target $(q_\gamma(t),p_\gamma(t))$   satisfies \eqref{target}. If the initial data  $\{(q_i(0),p_i(0))\}_{i=1}^N $ satisfies \eqref{initial} and $U_i$  is Lipschitz continuous with respect to $\{(q_i,p_i)\}_{i=1}^N$ with $\langle U_i,q_i\rangle=0$, then there exists a unique global-in-time solution $\{(q_i(t),p_i(t))\}_{i=1}^N$ to system \eqref{main1} and  $\{q_i(t)\}_{i=1}^N$ are located on $\mathbb{S}^2$ for all time $t>0$.
\end{maintheorem}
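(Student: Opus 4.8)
The plan is the classical three-step route to global well-posedness for a constrained second-order system: (i) local existence and uniqueness by the Picard--Lindel\"of theorem, (ii) invariance of the admissible set \eqref{initial}, so that the solution stays on $\bbs^2$ and, crucially, away from the origin where the centripetal term is singular, and (iii) an a priori energy bound ruling out finite-time blow-up. For (i), I would first observe that on the open set $\Omega=\{(q_i,p_i)_{i=1}^N:\ q_i\neq 0\ \text{for all }i\}$ the right-hand side of \eqref{main1} is locally Lipschitz: by \eqref{eqn:hrot} the map $(q_i,q_\gamma,p_\gamma)\mapsto\hR_{q_\gamma\rightarrow q_i}(p_\gamma)$ is polynomial, hence smooth; the centripetal, cooperative, and bonding terms are smooth on $\Omega$ (assuming, as is standard, $\sigma$ bounded and Lipschitz); $U_i$ is Lipschitz by hypothesis; and since $u_\gamma$ is continuous the given target curve $t\mapsto(q_\gamma(t),p_\gamma(t))$ is $C^1$ with derivatives bounded on compact time intervals. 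As the admissible data \eqref{initial} lie in $\Omega$, this yields a unique maximal solution on an interval $[0,T^\ast)$.

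For (ii), set $a_i:=\|q_i\|^2$ and $b_i:=\langle q_i,p_i\rangle$ and differentiate along \eqref{main1}, obtaining $\dot a_i=2b_i$ and $\dot b_i=\|p_i\|^2+\langle q_i,\dot p_i\rangle$. I would then verify the cancellations term by term: the cooperative term $\sum_j\frac{\sigma_{ij}}{N}(\|q_i\|^2q_j-\langle q_i,q_j\rangle q_i)$ and the bonding term $c_q(\|q_i\|^2q_\gamma-\langle q_i,q_\gamma\rangle q_i)$ are orthogonal to $q_i$ by construction; from \eqref{eqn:hrot} one computes $\langle q_i,\hR_{q_\gamma\rightarrow q_i}(p_\gamma)\rangle=\|q_i\|^2\langle q_\gamma,p_\gamma\rangle$, which vanishes because $\langle q_\gamma,p_\gamma\rangle\equiv0$ — this last fact coming from \eqref{target} by the same computation applied to the target, using $\langle q_\gamma,U_\gamma\rangle=0$; $\langle q_i,U_i\rangle=0$ by hypothesis; and the centripetal term contributes precisely $-\|p_i\|^2$. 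Hence $\langle q_i,\dot p_i\rangle=-\|p_i\|^2-c_pb_i$, so $(a_i,b_i)$ solves the autonomous linear system $\dot a_i=2b_i$, $\dot b_i=-c_pb_i$ with datum $(1,0)$; therefore $b_i(t)\equiv0$ and $a_i(t)\equiv1$ on $[0,T^\ast)$. In particular $q_i(t)\in\bbs^2$, $p_i(t)\in T_{q_i(t)}\bbs^2$, and the solution stays in a region where the vector field is smooth.

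For (iii), it suffices to bound $E(t):=\sum_{i=1}^N\|p_i(t)\|^2$ on each $[0,T]\subset[0,T^\ast)$. Differentiating $E$ along \eqref{main1} and using $\|q_i\|=1$, $\langle q_i,p_i\rangle=0$, the elementary bound $\|\hR_{q_\gamma\rightarrow q_i}\|_{\mathrm{op}}\le3$ (immediate from \eqref{eqn:hrot}, since $q_\gamma,q_i$ are unit vectors), so that $\|\hR_{q_\gamma\rightarrow q_i}(p_\gamma)\|\le3\|p_\gamma\|$, boundedness of $\sigma$, the estimate $\|U_i\|\le C(1+\sum_j\|p_j\|)$ from the Lipschitz hypothesis together with $\|q_i\|=1$, and boundedness of $\|p_\gamma(t)\|$ on $[0,T]$ (the target curve being continuous), one obtains a differential inequality $\dot E\le C_T(1+E)$ — the alignment term even contributes a favorable $-2c_p\sum_i\|p_i\|^2$. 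Gr\"onwall's inequality then bounds $E$ on $[0,T]$, and since also $\|q_i\|\equiv1$, the solution remains in a compact subset of $\Omega$ up to $T^\ast$; the standard continuation criterion forces $T^\ast=\infty$.

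The routine parts are (i) and (iii); the delicate point is (ii), where one must carefully check that every force term in \eqref{main1} other than the centripetal one is orthogonal to $q_i$ — in particular the identity $\langle q_i,\hR_{q_\gamma\rightarrow q_i}(p_\gamma)\rangle=\|q_i\|^2\langle q_\gamma,p_\gamma\rangle$ and the invariance $\langle q_\gamma,p_\gamma\rangle\equiv0$ for the target — so that $(a_i,b_i)$ decouples into a linear system having $(1,0)$ as an equilibrium, hence $(a_i,b_i)\equiv(1,0)$. Once the constraint is propagated the centripetal singularity is avoided and the energy estimate closes without further obstruction.
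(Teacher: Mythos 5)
Your proposal is correct and follows essentially the same route as the paper: local existence by Picard--Lindel\"of, propagation of the constraints $\|q_i\|=1$ and $\langle q_i,p_i\rangle=0$ via the closed ODE $\frac{d}{dt}\langle q_i,p_i\rangle=-c_p\langle q_i,p_i\rangle$ (the paper runs Gr\"onwall on $\sum_i|\langle q_i,p_i\rangle|^2$, which is equivalent), and then continuation to $T^\ast=\infty$. Your explicit energy bound in step (iii) is a welcome addition: the paper invokes the extensibility criterion of Teschl directly after bounding only the positions, whereas the continuation argument really requires the velocities to stay bounded on compact time intervals as well, which your estimate $\dot E\le C_T(1+E)$ supplies.
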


As in $\mathbb{R}^d$, we notice that the velocity alignment operator between the target and the agents plays an important role in target tracking. In particular, the bonding force between the target and the agents, $c_q(\|  q_i\|^2   q_\gamma - \langle   q_i,  q_\gamma \rangle   q_i)$, alone is not enough to track a target on $\mathbb{S}^2$. The velocity alignment operator $c_p(\hR_{q_\gamma \rightarrow q_i}(p_\gamma) -  p_i)$ is crucial for the target tracking algorithm. See the simulations in Section \ref{sec5}. In the next two theorems, we present a quantitative analysis of the velocity alignment operator with two different $U_i$'s;
\begin{align}\label{U1}
U_i=2\langle w_\gamma,  q_i\rangle (  q_i\times   p_i )+\dot w_\gamma(t)\times   q_i\end{align}
or
\begin{align}\label{U2}
 U_i=0,
\end{align}
where $w_\gamma$ is the angular velocity of the target given by
\begin{align}\label{wgamma}
w_\gamma=q_\gamma\times p_\gamma.
\end{align}

From Theorem \ref{thm1}, if the agents can obtain the exact target information containing acceleration, then the agents can accurately track the target, and the position differences between the target and the agents decay exponentially fast.
\begin{maintheorem}\label{thm1}
 Let   $(q_\gamma(t),p_\gamma(t))$ be a given target   satisfying \eqref{target} with a continuous target control $u_\gamma$ and $\{q_i(t),p_i(t)\}_{i=1}^N $ be the solution  to \eqref{main1}  satisfying  \eqref{initial}. We assume that $\sigma_{ij}= \sigma$ is a positive constant and
 \[U_i=2\langle w_\gamma,  q_i\rangle (  q_i\times   p_i )+\dot w_\gamma(t)\times   q_i,\]
where $w_\gamma$ is the angular velocity   defined in \eqref{wgamma}.

If $c_q>\sigma>0$ or
\begin{align*}
&\frac{1}{N}\sum_{i=1}^N\|p_i(0)-w_\gamma(0)\times q_i(0)\|^2\\
&\quad+\frac{\sigma}{2N^2}\sum_{i,j=1}^N   \|q_i(0)-q_j(0)\|^2+\frac{c_q}{N}\sum_{i=1}^N\|q_\gamma(0)-q_i(0)\|^2
<\sigma\left(1+\frac{c_q}{\sigma}\right)^2,
\end{align*}
  then the asymptotic complete rendezvous occurs and its convergence rate is exponential, i.e.,  there are positive constants $\mathcal{C}$, $\mathcal{D}$ such that
\[\|q_i(t)-q_\gamma(t)\|,~\|p_i(t)-p_\gamma(t)\|\leq \mathcal{C} e^{-\mathcal{D} t}.\]
\end{maintheorem}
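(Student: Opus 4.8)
The strategy is to pass to a reference frame that co-rotates with the target's instantaneous angular velocity $w_\gamma=q_\gamma\times p_\gamma$, where the velocity-alignment operator becomes a genuine linear damping. The first observation is the identity
\[
\hR_{q_\gamma\to q_i}(p_\gamma)=w_\gamma\times q_i,
\]
which is immediate from \eqref{eqn:hrot}, the vector triple product, and $\langle q_\gamma,p_\gamma\rangle=0$. Hence $c_p\bigl(\hR_{q_\gamma\to q_i}(p_\gamma)-p_i\bigr)=-c_p\tilde p_i$ with $\tilde p_i:=p_i-w_\gamma\times q_i$, and one checks $\langle\tilde p_i,q_i\rangle=0$, $p_\gamma=w_\gamma\times q_\gamma$, and $\tilde p_i=p_i-p_\gamma$ on $\{q_i=q_\gamma\}$, so that the rendezvous manifold is $\{\,\tilde p_i\equiv 0,\ q_i\equiv q_\gamma\,\}$. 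Differentiating $\tilde p_i$ along \eqref{main1}, substituting $U_i$ from \eqref{U1} (the term $\dot w_\gamma\times q_i$ cancels exactly), and using $\|q_i\|=1$ together with $\langle q_i,q_j\rangle=1-\tfrac12\|q_i-q_j\|^2$ and $\langle q_i,q_\gamma\rangle=1-\tfrac12\|q_i-q_\gamma\|^2$, I would rewrite the bonding and cooperative forces as the linear restoring force $-y_i$, where $y_i:=(\sigma+c_q)q_i-\sigma\hat q-c_qq_\gamma$ and $\hat q:=\tfrac1N\sum_jq_j$, plus quadratic "curvature corrections" parallel to $q_i$; this is the $\bbs^2$-analogue of the translational/structural splitting of Proposition \ref{prop3.2}. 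After projecting $\dot{\tilde p}_i$ onto $T_{q_i}\bbs^2$, the curvature corrections and most of the rotating-frame remainder drop out, leaving $\langle\tilde p_i,\dot{\tilde p}_i\rangle=-\langle y_i,\tilde p_i\rangle-c_p\|\tilde p_i\|^2+\langle w_\gamma,q_i-q_\gamma\rangle\langle w_\gamma,\tilde p_i\rangle$.

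Next I would introduce
\[
\mathcal{E}(t):=\frac1N\sum_{i=1}^N\|\tilde p_i\|^2+\frac{\sigma}{2N^2}\sum_{i,j=1}^N\|q_i-q_j\|^2+\frac{c_q}{N}\sum_{i=1}^N\|q_\gamma-q_i\|^2=:A+B+C,
\]
which at $t=0$ is exactly the quantity in the hypothesis. Since $v\mapsto w_\gamma\times v$ is skew-symmetric and $\sum_i\langle q_i-\hat q,w_\gamma\times q_i\rangle=0$, a direct computation gives $\dot B+\dot C=\tfrac2N\sum_i\langle y_i,\tilde p_i\rangle$ and, from the displayed expression for $\langle\tilde p_i,\dot{\tilde p}_i\rangle$, $\dot A=-2c_pA-\tfrac2N\sum_i\langle y_i,\tilde p_i\rangle+\tfrac2N\sum_i\langle w_\gamma,q_i-q_\gamma\rangle\langle w_\gamma,\tilde p_i\rangle$. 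The first-order terms telescope, so $\dot{\mathcal E}=-2c_pA+\mathcal R_1$, which alone dissipates only $A$. To recover dissipation of $B$ and $C$ I would add the hypocoercive correction $\mathcal R:=\tfrac1N\sum_i\langle y_i,\tilde p_i\rangle$ and work with $\mathcal E_\eps:=\mathcal E+\eps\mathcal R$; the load-bearing algebraic identity is
\[
\frac1N\sum_{i=1}^N\|y_i\|^2=(\sigma+2c_q)B+c_qC\ \ge\ c_q(B+C),
\]
together with $\langle y_i,q_i\rangle=\tfrac\sigma2\|\hat q-q_i\|^2+\tfrac12B+\tfrac{c_q}2\|q_\gamma-q_i\|^2\ge 0$, which renders $\dot{\mathcal R}$ controllable. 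Since $|\mathcal R|\le\sqrt A\,\bigl((\sigma+2c_q)B+c_qC\bigr)^{1/2}$, for $0<\eps\ll 1$ (depending on $c_p,c_q,\sigma$) the functional $\mathcal E_\eps$ is comparable to $\mathcal E$ and one obtains $\dot{\mathcal E}_\eps\le-2\mathcal D\,\mathcal E_\eps+\mathcal R_2$ with an explicit $\mathcal D=\mathcal D(c_p,c_q,\sigma)>0$, where $\mathcal R_2$ collects the curvature corrections (each carrying an extra factor $\|q_i-q_j\|$ or $\|q_i-q_\gamma\|$, hence cubic in $\mathcal E^{1/2}$) and the $w_\gamma$-remainders, bounded via Cauchy--Schwarz and the a priori control of $\sup_t\|w_\gamma(t)\|$ along the target trajectory.

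The proof then closes with an invariant-region / continuation argument. One shows that on the sublevel set $\{\mathcal E<\sigma(1+c_q/\sigma)^2\}$ the remainder obeys $|\mathcal R_2|\le\mathcal D\,\mathcal E_\eps$, so that $\dot{\mathcal E}_\eps\le-\mathcal D\,\mathcal E_\eps$ there; for $\eps$ small the smallness hypothesis places $\mathcal E_\eps(0)$ below this level, and a standard bootstrap shows the trajectory never leaves the set and $\mathcal E_\eps(t)\le\mathcal E_\eps(0)e^{-\mathcal D t}$. In the alternative regime $c_q>\sigma$ one argues instead that the target-bonding dominance, together with the structure of $\mathcal R_2$, prevents $\mathcal E$ from crossing the critical level along the flow, so the same conclusion applies. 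Either way $\mathcal E(t)\le 2\,\mathcal E(0)e^{-\mathcal D t}$; then $\tfrac{c_q}{N}\sum_i\|q_i-q_\gamma\|^2\le\mathcal E$ gives $\max_i\|q_i-q_\gamma\|\le\mathcal C\,e^{-\mathcal D t/2}$, and $p_i-p_\gamma=\tilde p_i+w_\gamma\times(q_i-q_\gamma)$ with $\|w_\gamma\|$ bounded yields the same exponential bound for $\|p_i-p_\gamma\|$, which is the asserted asymptotic complete rendezvous with exponential rate.

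I expect the main obstacle to be the quantitative invariant-region estimate: one must verify that the sphere's curvature nonlinearities and the residual rotating-frame term $\langle w_\gamma,q_i-q_\gamma\rangle\langle w_\gamma,\tilde p_i\rangle$ are genuinely dominated by the linear dissipation precisely on $\{\mathcal E<\sigma(1+c_q/\sigma)^2\}$, and that this critical level is simultaneously compatible with the choice of $\eps$ in the hypocoercive correction and with reconciling the two hypotheses ($c_q>\sigma$ versus the explicit smallness bound) inside a single bootstrap. This is where the precise threshold $\sigma(1+c_q/\sigma)^2=(\sigma+c_q)^2/\sigma$ is forced, through the identity $\tfrac1N\sum_i\|y_i\|^2=(\sigma+2c_q)B+c_qC$ and the elementary bound $B\le\sigma$; getting all the constants to line up, rather than the qualitative convergence itself, is the delicate part.
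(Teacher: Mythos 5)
Your frame change, your identity $\hR_{q_\gamma\to q_i}(p_\gamma)=w_\gamma\times q_i$, and your energy functional $\mathcal{E}=A+B+C$ all match the paper (which implements the frame change via the rigid motion $S_\gamma^t$ of Proposition \ref{prop3.2}, so that the exact identity $\dot{\mathcal{E}}=-2c_pA$ holds with \emph{no} residual $w_\gamma$-term). But there is a genuine gap in the mechanism you assign to the threshold $\sigma(1+c_q/\sigma)^2$. That number is not a smallness scale below which curvature nonlinearities are dominated by linear dissipation; it is exactly the energy of the spurious equilibria of the structural system, namely the configurations with $v_i\equiv 0$ and $\frac{\sigma}{N}\sum_j x_j+c_qx_\gamma=0$ (a short computation with $\frac{1}{N^2}\sum_{i,j}\|x_i-x_j\|^2=2-2\|\bar x\|^2$ and $\bar x=-\frac{c_q}{\sigma}x_\gamma$ gives energy exactly $(\sigma+c_q)^2/\sigma$ in your normalization). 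These equilibria lie on the boundary of your sublevel set $\{\mathcal{E}<\sigma(1+c_q/\sigma)^2\}$, and at points just inside that set near them one has $A\approx 0$ and tangential force $\approx 0$, so both $\dot{\mathcal{E}}$ and $\dot{\mathcal{R}}$ vanish to leading order while $\mathcal{E}_\eps$ stays of size $(\sigma+c_q)^2/\sigma>0$. Hence no uniform inequality $\dot{\mathcal{E}}_\eps\le-\mathcal{D}\,\mathcal{E}_\eps$ can hold on that sublevel set, and your invariant-region bootstrap cannot close. The alternative hypothesis $c_q>\sigma$ makes this even clearer: it carries no smallness assumption whatsoever (agents may start antipodal to the target), so a perturbative domination argument is impossible there; its role is only to make the spurious equilibria nonexistent, since they would require $\|\bar x\|=c_q/\sigma>1$.

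The paper resolves this by splitting the proof into a qualitative step and a quantitative step. First, since the structural system is autonomous ($x_\gamma$ is a constant vector in the co-rotating frame) and $\dot{\mathcal{E}}=-2c_pA$, LaSalle's invariance principle applies; the hypothesis (either form) serves solely to exclude the spurious equilibria from the $\omega$-limit set, yielding $x_i\to x_\gamma$, $v_i\to 0$ with no rate. Only \emph{then}, once $\mathcal{D}_x(t)+\mathcal{D}_v(t)$ is as small as desired, does the paper invoke the six-dimensional linear system $\dot X=MX+F$ of Proposition \ref{prop 4.1}, whose matrix $M$ is Hurwitz, together with the quadratic remainder bound $\|F\|\le 8(\sigma+c_q)[\mathcal{D}_x+\mathcal{D}_v]X_\gamma^1$ of Proposition \ref{prop 6.2}, and closes with Duhamel and Gronwall to get the exponential rate. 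If you want to keep a single-functional hypocoercivity argument, you would have to restrict it to a genuinely small neighborhood of the rendezvous state and still supply the LaSalle (or equivalent compactness) step to show the trajectory eventually enters that neighborhood; as written, your proposal conflates these two regimes.
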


\begin{remark}\begin{enumerate}
                \item If the above sufficient condition in Theorem \ref{thm1} does not hold, then we can find a steady-state solution. This means that the  sufficient condition is almost optimal to lead the convergence result in Theorem \ref{thm1}. See Section \ref{sec5}.
                \item
                The author in \cite{Olfati} does not deal with the estimate of the distance between the target and agents. Our model is inspired by \cite{Olfati}, but the target tracking estimate and practical rendezvous are novel.  

                \item The derivation of $U_i$ in the above theorem is technical, but from the frame decomposition in Proposition 3.2, it is a very natural choice to obtain the complete rendezvous.
              \end{enumerate}

\end{remark}

The former one in \eqref{U1} corresponds to the case with the target acceleration, while it is unknown in the latter case \eqref{U2}. These choices with the different amounts of the target information induce the different accuracies of the target tracking. Since the target information obtained by the agents through observation is usually incomplete,  there have been many studies to overcome this incompleteness. For example, many researchers proposed target tracking systems including restricted target information \cite{D-S-A-Y,S-D-A}, communication-induced delays \cite{H,O-S-S}, and additive noise from measurement \cite{D-B,Y-S-C}. The result in Theorem \ref{thm2} below means that the large coefficients of the system allow the agents to get close enough to the target as needed without acceleration information of the target. In other words, the practical rendezvous occurs.

\begin{maintheorem}\label{thm2}

For  $(q_\gamma(t),p_\gamma(t))$  satisfying \eqref{target} with a continuous target control $u_\gamma$, let  $\{q_i(t),p_i(t)\}_{i=1}^N $ be the solution to \eqref{main1}  subject to the initial data satisfying  \eqref{initial} and
\[U_i=0.\]
Assume that $\sigma_{ij}= \sigma$ is a positive constant and  the angular velocity of the target and its time derivative are bounded\[\|w_\gamma\|,\|\dot w_\gamma\|<C_\gamma.\]
If $\|p_i(0)-p_\gamma(0)\|\ne 2$ for all $i\in \{1,\ldots,N\}$, then the asymptotic practical rendezvous occurs and
\[\|q_i(t)-q_\gamma(t)\|\leq
 \mathcal{C}e^{-\frac{\mathcal{D}}{4} t}+\frac{ \mathcal{C}}{\mathcal{D}}
,\]
where $\mathcal{C}$ is a positive constant depending on the initial data, $\sigma$, and $C_\gamma$.  The constant $\mathcal{D}$ is given by
\begin{align*}
  \mathcal{D}:=\left\{ \begin{array}{ll}
               \displaystyle ~c_p-\sqrt{-4c_q+c_p^2},&\quad \mbox{if}\quad c_p^2\geq -4 c_q,   \\
               \displaystyle   ~c_p,&\quad \mbox{if}\quad c_p^2< -4 c_q.
              \end{array}\right.
\end{align*}

\end{maintheorem}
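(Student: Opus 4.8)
The plan is a Lyapunov--Gr\"onwall argument on (a mixed-term modification of) the functional whose smallness is the hypothesis of Theorem~\ref{thm1}. I would begin with the algebraic identity
\[
\hR_{q_\gamma \rightarrow q_i}(p_\gamma) = w_\gamma \times q_i,
\]
valid whenever $\|q_\gamma\|=\|q_i\|=1$ and $p_\gamma\perp q_\gamma$; it follows by expanding the definition \eqref{eqn:hrot} and using $w_\gamma=q_\gamma\times p_\gamma$. Writing $P_i:=p_i-w_\gamma\times q_i$, this turns the velocity-alignment term into $c_p(\hR_{q_\gamma\rightarrow q_i}(p_\gamma)-p_i)=-c_pP_i$, so the velocity error $P_i$ is directly damped at rate $c_p$. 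The working functional would be
\[
\mathcal{L}(t):=\frac1N\sum_{i=1}^N\|P_i\|^2+\frac{\sigma}{2N^2}\sum_{i,j=1}^N\|q_i-q_j\|^2+\frac{c_q}{N}\sum_{i=1}^N\|q_\gamma-q_i\|^2+\frac{\mu}{N}\sum_{i=1}^N\langle P_i,q_i-q_\gamma\rangle,
\]
where $\mu>0$ is the small ``mixed'' coefficient one adds to the energy of a damped second-order system so that the position variables also dissipate; its admissible size and the resulting contraction rate are dictated by the damped harmonic oscillator (damping $c_p$, stiffness $c_q$) obeyed by the translational part of the phase in the reference-frame decomposition of Proposition~\ref{prop3.2} --- that oscillator is where $\mathcal{D}$ comes from.

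Next I would differentiate $\mathcal{L}$ along \eqref{main1} with $U_i=0$. Because $\sigma_{ij}\equiv\sigma$, the cooperative-control and bonding terms cancel against the time derivatives of the corresponding pieces of $\mathcal{L}$ (the bonding cancellation leaving the manifestly nonpositive remainder $-\tfrac{c_q}{2N}\sum\|q_i-q_\gamma\|^2(1+\langle q_i,q_\gamma\rangle)$, and the cooperative leftover vanishing by antisymmetry of the scalar triple product); the centripetal term drops because $\langle q_i,P_i\rangle=0$; the alignment term contributes $-\tfrac{2c_p}{N}\sum\|P_i\|^2$; and the mixed term transfers dissipation to the positions. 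For a suitable $\mu$ these assemble into $-\tfrac{\mathcal{D}}{2}\mathcal{L}$. What survives is an error
\[
\mathcal{R}=-\frac2N\sum_i\bigl\langle \dot w_\gamma\times q_i+w_\gamma\times p_i,\ P_i\bigr\rangle+(\text{mixed-term counterpart}),
\]
i.e.\ exactly the $w_\gamma,\dot w_\gamma$-dependent terms that the control $U_i$ of Theorem~\ref{thm1} is included to handle. Using $\langle w_\gamma\times p_i,P_i\rangle=\langle w_\gamma,q_i\rangle\langle w_\gamma,P_i\rangle$, the bounds $\|w_\gamma\|,\|\dot w_\gamma\|\le C_\gamma$, and $\|q_i\|=1$, $\langle q_i,p_i\rangle=0$, every term of $\mathcal{R}$ becomes linear in $P_i$ and in $q_i-q_\gamma$ with coefficients controlled by $C_\gamma$; in particular the orthogonality $q_i\times P_i\perp P_i$ removes the would-be quadratic-in-$p_i$ contributions, so no a priori bound on $\|p_i\|$ is required. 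Cauchy--Schwarz --- plus a Young's inequality absorbing the $\langle w_\gamma\times P_i,q_i-q_\gamma\rangle$ piece into the $\|P_i\|^2$ and $\|q_i-q_\gamma\|^2$ dissipation --- then gives $|\mathcal{R}|\le\mathcal{C}\sqrt{\mathcal{L}}$ with $\mathcal{C}=\mathcal{C}(C_\gamma,\sigma,N)$ \emph{independent} of $c_q,c_p$, so that $\dot{\mathcal{L}}\le-\tfrac{\mathcal{D}}{2}\mathcal{L}+\mathcal{C}\sqrt{\mathcal{L}}$.

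The rest is routine. With $y=\sqrt{\mathcal{L}}$ this reads $\dot y\le-\tfrac{\mathcal{D}}{4}y+\tfrac{\mathcal{C}}{2}$, so Gr\"onwall gives $\sqrt{\mathcal{L}(t)}\le\sqrt{\mathcal{L}(0)}\,e^{-\mathcal{D}t/4}+\tfrac{2\mathcal{C}}{\mathcal{D}}$. For $\mu$ small, $\mathcal{L}$ dominates $\tfrac{c_q}{N}\sum\|q_\gamma-q_i\|^2$ up to an absolute constant, hence $\max_i\|q_i(t)-q_\gamma(t)\|\lesssim\sqrt{\tfrac{N}{c_q}\mathcal{L}(t)}$; moreover $\sqrt{N/c_q}\,\sqrt{\mathcal{L}(0)}$ is bounded independently of $c_q$, since the factor $c_q$ inside $\tfrac{c_q}{N}\sum\|q_\gamma(0)-q_i(0)\|^2\subset\mathcal{L}(0)$ cancels it. Renaming constants yields $\max_i\|q_i(t)-q_\gamma(t)\|\le\mathcal{C}e^{-\mathcal{D}t/4}+\mathcal{C}/\mathcal{D}$, the asserted bound. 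Finally, letting $c_q,c_p\to\infty$ along a path on which $\mathcal{D}\to\infty$ (for instance $c_p\to\infty$ with $c_q$ chosen large compared to $c_p^2$, so that $\mathcal{D}=c_p$), the offset $\mathcal{C}/\mathcal{D}\to0$, which is the asymptotic practical rendezvous.

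The main obstacle is the bookkeeping behind $\dot{\mathcal{L}}\le-\tfrac{\mathcal{D}}{2}\mathcal{L}+\mathcal{C}\sqrt{\mathcal{L}}$: verifying that the combined dissipation of the bonding, alignment, centripetal, inter-agent, and mixed terms really collapses to $-\tfrac{\mathcal{D}}{2}\mathcal{L}$ for an admissible $\mu$ --- on $\mathbb{S}^2$ the momentum-conservation cancellations available on $\mathbb{R}^n$ fail, so one must carry along the geometric factors $1+\langle q_i,q_\gamma\rangle$ throughout --- and checking that $\mathcal{C}$ is genuinely free of $c_q,c_p$ so that the practical-rendezvous limit goes through. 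The hypothesis $\|p_i(0)-p_\gamma(0)\|\neq2$ enters as the non-degeneracy ensuring that the velocity component of the decomposition of Proposition~\ref{prop3.2} stays well defined (away from the antipodal configuration) along the flow, so that the estimate on $\mathcal{L}$ transports back to the original phase variables.
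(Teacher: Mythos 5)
Your route is genuinely different from the paper's: you work with a hypocoercivity-type Lyapunov functional $\mathcal{L}$ (energy plus a small mixed term $\mu\langle P_i,q_i-q_\gamma\rangle$) in the original frame, whereas the paper passes to the structural variables $x_i=S_\gamma^{-t}q_i$ and then builds the \emph{weighted} per-agent functionals $X_i^k$ of \eqref{fnl}, with denominators $(4-\|x_i-x_\gamma\|^2)^k$, precisely so that the bonding force contributes the exact linear term $-c_qX_i^1$ with no $c_q$-weighted quartic remainder. Several of your ingredients check out: $\hR_{q_\gamma\to q_i}(p_\gamma)=w_\gamma\times q_i$ is correct, $P_i=p_i-w_\gamma\times q_i$ coincides with $S_\gamma^tv_i$, the bonding and cooperative terms do cancel exactly against $\frac{d}{dt}\E_c$ (by $\langle q_\gamma,w_\gamma\times q_i\rangle=-\langle p_\gamma,q_i\rangle$ and antisymmetry of the triple product), and the $w_\gamma,\dot w_\gamma$ errors paired with $P_i$ are indeed $O(C_\gamma+C_\gamma^2)\|P_i\|$ uniformly in $c_q,c_p$.

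The gap is exactly at the step you defer to ``bookkeeping'': the claimed inequality $\dot{\mathcal{L}}\le-\tfrac{\mathcal{D}}{2}\mathcal{L}+\mathcal{C}\sqrt{\mathcal{L}}$ does not hold as stated. The only source of dissipation for the position block $\tfrac{c_q}{N}\sum\|q_\gamma-q_i\|^2$ of $\mathcal{L}$ is the mixed term, and it produces $-\tfrac{\mu c_q}{2N}\sum\|q_i-q_\gamma\|^2\bigl(1+\langle q_i,q_\gamma\rangle\bigr)$; this is nonpositive, but it degenerates as $q_i$ approaches $-q_\gamma$, so it does not dominate $-\tfrac{\mathcal{D}}{2}\cdot\tfrac{c_q}{N}\sum\|q_i-q_\gamma\|^2$ without an a priori bound $\max_i\|q_i(t)-q_\gamma(t)\|\le 2-\delta$ propagated in time. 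Establishing that bound is not a formality: it is the bootstrap on $[0,T)$ that the paper runs after Proposition \ref{prop 4.3}, and it itself uses the largeness of $c_q,c_p$; nonpositivity of the remainder alone yields no rate. Relatedly, the admissible size of $\mu$ is squeezed between positivity of $\mathcal{L}$ (forcing $\mu\le\sqrt{c_q}$ up to constants), the requirement $\mu(1+\langle q_i,q_\gamma\rangle)\ge\mathcal{D}$ for the position rate, and the fact that the mixed-term errors $\mu\langle\dot w_\gamma\times q_i+w_\gamma\times p_i,\,q_i-q_\gamma\rangle$ carry the factor $\mu$, which threatens your claim that $\mathcal{C}$ is free of $c_q,c_p$; none of these compatibilities is verified. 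Finally, the hypothesis $\|p_i(0)-p_\gamma(0)\|\ne 2$ is not about well-definedness of the decomposition of Proposition \ref{prop3.2} (the rigid motion $S_\gamma^t$ has no singularity); in the paper it is the non-antipodality condition that makes the weighted functionals finite at $t=0$ and seeds the bootstrap. Your scheme may well be completable, but the missing bootstrap and the uniform-in-$(c_q,c_p)$ constant are the substance of the theorem, not an afterthought.
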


There are technical issues in the proofs of Theorems \ref{thm1} and \ref{thm2}. We can obtain the complete rendezvous result in Theorem \ref{thm1} through Lasalle’s invariance principle with an energy functional. However, Lasalle’s invariance principle does not give a convergence rate.  An appropriate Lyapunov functional will be used to obtain the exponential convergence result. In particular, in this case, we derive a closed differential inequality by using six functionals including information on the distance between the target and agents and the distance between agents. The practical rendezvous in Theorem \ref{thm2} has a more subtle issue. It is necessary to control the distance between the target and agents through the size of the coefficients. However, it is impossible if the coefficients appear in the nonlinear higher-order terms except for the linear terms.  If we use a standard functional, the coefficients necessarily occur in the nonlinear terms due to the geometrical characteristics of $\mathbb{S}^2$. This problem will be solved by using new functionals inspired by hyperbolic geometry.

The rest of the paper is organized as follows.
In Section \ref{sec2}, we present the global-in-time existence and uniqueness of the solution to \eqref{main1} and  target tracking results for $\mathbb{R}^3$.  Section \ref{sec3} is devoted to a reference frame decomposition for the main system. From this decomposition, the solution to the main system is represented by the composition of operators for the translational part and the structural part.  Next, we reduce the  system for the structural  part to a linearized system in Section \ref{sec4}. Using this, we prove  the complete and practical rendezvouses of Theorems \ref{thm1} and \ref{thm2} in Section \ref{sec5}. In Section \ref{sec6}, we verify our analytic results using numerical simulations. Section \ref{sec7} is devoted to the summary of our results.

\section{Preliminary: Global well-posedness and   Motivations}\label{sec2}
\setcounter{equation}{0}

\subsection{The global existence and uniqueness}
In this section, we provide the proof of Theorem \ref{thm0}: there is a unique global-in-time solution  to \eqref{main1} and this solution is located on the sphere when the initial data satisfies the admissible conditions in \eqref{initial}.

For the local existence and uniqueness, we use the same argument in \cite{C-K-S,C-K-S1}.  For given $C^1$ functions $q_\gamma$, $p_\gamma$, and $w_\gamma=q_\gamma\times p_\gamma$,  we consider the following system of ODEs:
\begin{align}
\begin{aligned}\label{eqn:wel11}
\dot q_i(t)&=p_i(t),\\
\dot p_i(t)&=-\frac{\|  p_i\|^2}{\|q_i\|^2}  q_i + \sum_{j=1}^N\frac{\sigma(\|x_i-x_j\|^2)}{N}(\|  q_i\|^2   q_j - \langle   q_i,  q_j \rangle  q_i)\\
&\quad +c_q(\|  q_i\|^2   q_\gamma - \langle   q_i,  q_\gamma \rangle   q_i)+c_p( \langle q_\gamma, q_i\rangle  p_\gamma -
 \langle q_i,p_\gamma\rangle q_\gamma -  p_i)+U_i.\end{aligned}
\end{align}
 Here, we will choose $U_i=2\langle w_\gamma,  q_i\rangle (  q_i\times   p_i )+\dot w_\gamma(t)\times   q_i$ for the complete rendezvous and
$U_i=0$ for the practical rendezvous.

We assume that the initial data  $\{(q_i(0),p_i(0))\}_{i=1}^N$ satisfies  the admissible condition in \eqref{initial}. Then the right-hand side of \eqref{eqn:wel11} is Lipschitz continuous with respect to $\{(q_i,p_i)\}_{i=1}^N$ in a small neighborhood of $\{(q_i(0),p_i(0))\}_{i=1}^N$ in $\R^{6N}$. By the Picard-Lindel\"{o}f Theorem, there is the maximum time interval $[0,T_M)$ in which a solution of \eqref{eqn:wel11} exists and it is unique.

We next follow the same argument in \cite{C-K-S,C-K-S1}. On the maximum time interval $[0,T_M)$,  we  take the inner product between  the second equation of \eqref{eqn:wel11} and $x_i$  to obtain that
\begin{align}\label{eq 2.7}
\langle \dot{p}_i, q_i\rangle &=-\|  p_i\|^2
-c_p \langle   p_i,q_i\rangle.
\end{align}
By \eqref{eq 2.7} and the first equation of \eqref{eqn:wel11}, we obtain that
\begin{align*}
\frac{d}{dt}\sum_{i=1}^N|\langle p_i,q_i\rangle|^2&=2\sum_{i=1}^N ( \langle \dot{p}_i,q_i\rangle+\langle p_i,\dot{q}_i\rangle)\langle p_i,q_i\rangle  \\
&=  2\sum_{i=1}^N(\langle \dot{p}_i, q_i\rangle +\|p_i\|^2)~\langle p_i,q_i\rangle
\\
&= -2c_p\sum_{i=1}^N |\langle p_i,q_i\rangle|^2
.
\end{align*}
Note that the initial data satisfies $\sum_{i=1}^N |\langle v_i(0),x_i(0)\rangle|^2=0$. Therefore, the Gronwall inequality implies that
\begin{align*}\sum_{i=1}^N |\langle v_i(t),x_i(t)\rangle|\equiv0,\quad \mbox{for}~ t>0,\end{align*}
and this implies  that
\begin{align*}\langle v_i(t),x_i(t)\rangle\equiv 0.\end{align*}
We take the inner product between $\dot{q}_i$ and $q_i$.  By the first equation of \eqref{eqn:wel11},
\begin{align*}
\frac{d}{dt}\| q_i\|^2=2\langle \dot{q}_i,q_i\rangle =2\langle p_i,q_i\rangle=0.
\end{align*}
Since  initial conditions satisfy $\|x_i(0)\|=1$ and $\langle v_i(0),x_i(0)\rangle=0$ for all $i\in \{1,\ldots,N\}$, we have
\begin{align*}\|x_i(t)\|\equiv 1, \quad \mbox{for}~t>0,~ i\in\{1,\ldots N\} . \end{align*}

In conclusion, we can apply the extensibility of solutions in \cite[Corollary 2.2]{Tes12} to obtain that\[T_{M} =  \infty.\] Moreover, we can easily check that $\{(q_i(t),p_i(t))\}_{i=1}^N $ is the unique solution to \eqref{main1} by a standard argument.  Therefore, we can obtain the following proposition.

\begin{proposition}
Let $\{(q_i(t),p_i(t))\}_{i=1}^N $ be a solution to \eqref{main1} with \eqref{initial}. Then  for all $i\in \{1,\ldots,N\}$ and  $t>0$,
\begin{align*}\langle q_i(t),p_i(t)\rangle=0 \quad\hbox{ and } \quad \|q_i(t)\|= 1.\end{align*}
\end{proposition}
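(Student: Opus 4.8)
The plan is to read off both invariants from the algebraic structure of \eqref{main1}: the scalar quantity $\sum_{i=1}^N|\langle p_i,q_i\rangle|^2$ should satisfy a linear homogeneous ODE, so that it stays identically zero because it does so at $t=0$, and then conservation of $\|q_i\|$ follows for free. This is essentially the computation already carried out in establishing Theorem \ref{thm0}, now isolated as a statement.

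First I would take the inner product of the second equation of \eqref{main1} with $q_i$ and check that every term on the right-hand side is tangential to $q_i$ except the centripetal term and the linear part of the velocity alignment term. Indeed, $\|q_i\|^2 q_j-\langle q_i,q_j\rangle q_i$ and $\|q_i\|^2 q_\gamma-\langle q_i,q_\gamma\rangle q_i$ are orthogonal to $q_i$ by inspection; $\hR_{q_\gamma\to q_i}(p_\gamma)=\langle q_\gamma,q_i\rangle p_\gamma+\langle q_\gamma,p_\gamma\rangle q_i-\langle q_i,p_\gamma\rangle q_\gamma$ is orthogonal to $q_i$ because $\langle q_\gamma,p_\gamma\rangle=0$ (the target lies on $\bbs^2$); and $\langle U_i,q_i\rangle=0$ either by hypothesis in Theorem \ref{thm0} or, for the choice \eqref{U1}, because $q_i\times p_i$ and $\dot w_\gamma\times q_i$ are both orthogonal to $q_i$ (and trivially for \eqref{U2}). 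The centripetal term contributes $-\|p_i\|^2$, giving $\langle\dot p_i,q_i\rangle=-\|p_i\|^2-c_p\langle p_i,q_i\rangle$, which is exactly \eqref{eq 2.7}.

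Next I would differentiate $\sum_{i=1}^N|\langle p_i,q_i\rangle|^2$. Using $\dot q_i=p_i$ and the identity above, $\frac{d}{dt}\langle p_i,q_i\rangle=\langle\dot p_i,q_i\rangle+\|p_i\|^2=-c_p\langle p_i,q_i\rangle$, hence $\frac{d}{dt}\sum_{i=1}^N|\langle p_i,q_i\rangle|^2=-2c_p\sum_{i=1}^N|\langle p_i,q_i\rangle|^2$. Since \eqref{initial} forces $\sum_{i=1}^N|\langle p_i(0),q_i(0)\rangle|^2=0$, Gronwall's inequality yields $\langle p_i(t),q_i(t)\rangle\equiv0$ for every $i$ and all $t>0$. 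Then $\frac{d}{dt}\|q_i\|^2=2\langle\dot q_i,q_i\rangle=2\langle p_i,q_i\rangle=0$, so $\|q_i(t)\|=\|q_i(0)\|=1$. There is no genuine obstacle here: the argument is a routine energy estimate, and the only point needing a moment's care is the bookkeeping that the coupling and control terms are tangential to $q_i$, which is built into the design of the model and of $U_i$. (Strictly, one runs this computation on the maximal existence interval $[0,T_M)$ and then invokes the extensibility criterion to conclude $T_M=\infty$, but since the statement already presupposes a solution to \eqref{main1}, this is subsumed in Theorem \ref{thm0}.)
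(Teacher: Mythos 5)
Your proof is correct and follows essentially the same route as the paper: take the inner product of the momentum equation with $q_i$ to get $\langle \dot p_i,q_i\rangle=-\|p_i\|^2-c_p\langle p_i,q_i\rangle$, deduce that $\sum_i|\langle p_i,q_i\rangle|^2$ satisfies a linear homogeneous ODE and hence vanishes identically by Gronwall, and then conclude $\|q_i\|\equiv 1$. The only difference is that you spell out the term-by-term orthogonality checks (including $\langle \hR_{q_\gamma\to q_i}(p_\gamma),q_i\rangle=0$ via $\langle q_\gamma,p_\gamma\rangle=0$) more explicitly than the paper does, which is a harmless refinement.
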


\subsection{Target tracking problem in $\mathbb{R}^3$}\label{sec 2.3}
%
%
%
%

In this section, we estimate the distance between the target and agents for the following model in $\mathbb{R}^3$:
\begin{align*}
\dot q_i&=p_i,\\
\dot p_i&=\sum_{j=1}^N \frac{\psi_{ij}}{N}(p_j-p_i)+\sum_{j=1}^N\frac{\sigma_{ij}}{N}(q_j-q_i) +c_q(q_{\gamma}-q_i)+c_p(p_{\gamma}-p_i) + u_i,
\end{align*}
where $q_i\in \mathbb{R}^3$ and $p_i\in \mathbb{R}^3$ are the position and velocity of the $i$th agent, respectively. Here, $q_\gamma, p_\gamma$, and $u_\gamma$ are the position, velocity, and acceleration of a given target ($\gamma$-agent) satisfying
\begin{align*}
\dot q_\gamma&=p_\gamma,\\
\dot p_\gamma&=u_\gamma.
\end{align*}
A new input parameter $u_i$ will be determined later. Depending on the information of the target, we choose two different $u_i$'s and analyze the corresponding asymptotic behaviors. The argument is straightforward, and thus the reader familiar with target tracking problems in $\mathbb{R}^3$ may skip this section.


If $u_i = 0$, then the above model corresponds to the one in Olfati-Saber's seminal paper \cite{Olfati}. As studied in \cite{Olfati}, the system of equations can be decomposed as two second-order systems for the structural dynamics and translational dynamics. For simplicity, we assume that $\psi_{ij}=0$ and $\sigma_{ij}=\sigma_{ji}$ for all indices $i$ and $j$ in $\{1,\ldots,N\}$. We note that the effect of the flocking term $ \sum_{j=1}^N \frac{\psi_{ij}}{N}(p_j-p_i)$ is negligible, when $\max_{1\leq i,j\leq N}\|p_j-p_i\|\ll 1$. See the numerical simulations in Figures \ref{fig5} and \ref{fig6}.

Let
\[q_c=\frac{1}{N}\sum_{i=1}^N q_i,\quad p_c=\frac{1}{N}\sum_{i=1}^N p_i,\]
and
\begin{align}\label{eq 2.25}
x_i=q_i-q_c,\quad v_i=p_i-p_c.
\end{align}
Then, the above dynamics can be decomposed into the translational dynamics \eqref{eq 2.105} and the structural dynamics \eqref{eq 2.106}:
\begin{align}\begin{aligned}\label{eq 2.105}
\dot q_c&=p_c,\\
\dot p_c&= c_q(q_{\gamma}-q_c)+c_p(p_{\gamma}-p_c)+u_i,
\end{aligned}
\end{align}
and
\begin{align}\begin{aligned}\label{eq 2.106}
\dot x_i&=x_i,\\
\dot v_i&=\sum_{j=1}^N\frac{\sigma_{ij}}{N}(x_j-x_i) -c_q x_i-c_p v_i.
\end{aligned}\end{align}The structural dynamics part in \eqref{eq 2.106} has been analyzed in \cite{Olfati}.

We focus on the translational dynamics part in \eqref{eq 2.105} for two different cases of $u_i$. We first suppose that all of the position $p_\gamma$, velocity $q_\gamma$, and acceleration $u_\gamma$ of the target are given. In this case, it is natural to choose $u_i:=u_\gamma$. Let
\[q_d=q_c-q_{\gamma},\quad p_d=p_c-p_{\gamma}.\]
Then the translational dynamics in \eqref{eq 2.105} can be rewritten as
\begin{align*}
\dot q_d&=p_d,\\
\dot p_d&= -c_q q_d-c_p p_d.
\end{align*}
This is a simple linear system of ODEs  and it has the following solution;
\begin{align*}
q_d(t)&=\frac{1}{2 \sqrt{c_p^2-4 c_q}}
\bigg[-c_p q_d(0) e^{\frac{1}{2} t \left(-\sqrt{c_p^2-4 c_q}-c_p\right)}+q_d(0) \sqrt{c_p^2-4 c_q} e^{\frac{1}{2} t \left(-\sqrt{c_p^2-4 c_q}-c_p\right)}
\\&\quad\qquad \qquad \qquad\qquad  +c_p q_d(0) e^{\frac{1}{2} t \left(\sqrt{c_p^2-4 c_q}-c_p\right)}+q_d(0) \sqrt{c_p^2-4 c_q} e^{\frac{1}{2} t \left(\sqrt{c_p^2-4 c_q}-c_p\right)}\\
&\quad\qquad\qquad \qquad \qquad  \qquad -2 p_d(0) e^{\frac{1}{2} t \left(-\sqrt{c_p^2-4 c_q}-c_p\right)}+2 p_d(0) e^{\frac{1}{2} t \left(\sqrt{c_p^2-4 c_q}-c_p\right)}\bigg].
\end{align*}
Therefore, we can easily check that $q_d$ and $p_d$ converge  to zero exponentially. This means that the complete rendezvous with an exponential decay rate occurs for any positive $c_q$ and $c_p$.

If we only know the position and velocity of the target, we cannot expect a complete rendezvous. On the other hand, we can control the maximum position difference between the target and agents if the tracking coefficients for the target are sufficiently large. We refer to \cite{C-C-H,C-C-H2} for related issues.

For $u_i=0$, the translational dynamics is given by
\begin{align*}
\dot q_d&=p_d,\\
\dot p_d&= -c_q q_d-c_p p_d-u_\gamma.
\end{align*}
As we mentioned above, we cannot expect the complete rendezvous for this case. Alternatively, to obtain the practical rendezvous estimate, we additionally assume that the acceleration of the target is bounded:
 \begin{align}\label{assump_gamma}
 \limsup \|u_\gamma\|\leq C_\gamma,
 \end{align}
 for some $C_\gamma>0$. Then we define auxiliary variables as follows.
\[X^1_d=\langle q_d,q_d\rangle,\quad X^2_d=\langle q_d,p_d\rangle,\quad X^3_d=\langle p_d,p_d\rangle. \]

By the system of the translational dynamics, we can obtain
\begin{align*}
\dot X^1_d&=2 X^2_d,\\
\dot X^2_d&=X^3_d -c_qX^1_d-c_pX^2_d-\langle q_d, u_\gamma \rangle,\\
\dot X^3_d&= -2c_qX^2_d-2c_pX^3_d-2\langle p_d, u_\gamma \rangle.
\end{align*}
We rewrite the above system of equations as the following  inhomogeneous linear system of ODEs:
\begin{align*}
\dot X_d=A_d X_d+F_d,
\end{align*}
where  $X_d=(X^1_d,X^2_d,X^3_d)^T$ and  $F_d=(0,-\langle q_d, u_\gamma \rangle,-2\langle p_d, u_\gamma \rangle)^T$, and  the coefficient matrix is given by
\begin{align*}
M_d=
\begin{bmatrix}
0 &2 &0  \\
 -c_q & -c_p  &1 \\
0 &-2c_q & -2c_p
\end{bmatrix}.
\end{align*}
Note that $M_d$ has the following eigenvalues.
\[\left\{- c_p , ~- c_p -\sqrt{c_p^2-4c_q},~- c_p +\sqrt{c_p^2-4c_q}\right\}.\]

Let $D_d<0$ be the greatest real part in the above eigenvalues and let
\[-\mu_d=D_d.\]
Then, we have
\begin{align*}
\frac{d}{dt}\|X_d\|^2&= 2\langle X_d,M_d X_d\rangle +2\langle X_d,F_d\rangle\\&\leq -2\mu_d  \|X_d\|^2+ 2\|X_d\|\|F_d\|,
\end{align*}
this implies that
\begin{align*}
\frac{d}{dt}\|X_d\| \leq -\mu_d  \|X_d\|+ \|F_d\|.
\end{align*}
From  elementary calculations, it follows that for any $\epsilon>0$,
\begin{align*}
\|F_d\|&\leq \|q_d\|\|u_\gamma\|+2\|p_d\|\|u_\gamma\|\\
&\leq \frac{\epsilon\|q_d\|^2}{2}+\frac{1}{2\epsilon}\|u_\gamma\|^2+\frac{\epsilon\|p_d\|^2}{2}+\frac{2}{\epsilon}\|u_\gamma\|^2\\
& \leq \epsilon\|X_d\|+\frac{5}{2\epsilon}\|u_\gamma\|^2.
\end{align*}
We choose $\displaystyle \epsilon=\mu_d/2$ and use the Gronwall inequality and \eqref{assump_gamma} to obtain  that
\begin{align*}
\|X_d\|&\leq  e^{-(\mu_d-\epsilon)t}\|X_d(0)\|+\frac{5}{2\epsilon}e^{(\mu_d-\epsilon)t}\int_0^t \|u_\gamma(s)\|^2e^{(\mu_d-\epsilon)s}ds\\
&\leq  e^{-(\mu_d-\epsilon)t}\|X_d(0)\|+C_\gamma^2\frac{5}{2\epsilon}e^{-(\mu_d-\epsilon)t}
\frac{e^{(\mu_d-\epsilon)t}-1}{\mu_d-\epsilon}.
\end{align*}
This implies that
\[\limsup \|X_d\|\leq \frac{10C_\gamma^2}{\mu_d^2}.\]
Thus, if we choose a sufficiently large tracking coefficients $c_q,c_p>0$, then we obtain that
\[\limsup_{t\to \infty} \|q_i(t)-q_\gamma(t)\|,~\limsup_{t\to \infty} \|p_i(t)-p_\gamma(t)\|\ll 1.\]

\section{Generalized rotation operator on sphere and reference frame decomposition}\label{sec3}
\setcounter{equation}{0}

In this section, we decompose our model \eqref{main1} on $\mathbb{S}^2$ into structural dynamics and translational dynamics. Due to the complexity of \eqref{main1}, the decomposition of agents' positions into a sum of two vectors as the model in $\bbr^3$ is not suitable for our case. Instead, we observe that a rigid body motion on $\mathbb{S}^2$ can be used as a reference frame. Choosing an appropriate rigid body motion, our model can be represented as the composition of a rigid body motion and local alignment dynamics. The rigid body motion can be derived based on the angular velocity tensor $W_\gamma(t)$ of the  $\gamma$-agent and a generalized rotation operator $S_\gamma$ along the given target described below. Recall the given $\gamma$-agent trajectory on $\mathbb{S}^2$:
\[\dot q_\gamma= p_\gamma,\]
where $q_\gamma\in \bbs^2$ and $p_\gamma\in T_x\bbs^2$ are the position and velocity of the given $\gamma$-agent, respectively.

 Let \[w_\gamma=q_\gamma\times p_\gamma.\]
By elementary calculation, we have $q_\gamma\times w_\gamma=-p_\gamma$ and
\begin{align*}
\dot q_\gamma=w_\gamma \times q_\gamma.
\end{align*}
For the angular velocity vector $w_\gamma=(w_\gamma^{1},w_\gamma^{2},w_\gamma^{3})^T$, we define  the angular velocity tensor $W_\gamma(t)$ of the  $\gamma$-agent  by
\begin{align*}
W_\gamma^t=
\begin{bmatrix}
0 & -w_\gamma^{3}(t) &w_\gamma^{2}(t)  \\
 w_\gamma^{3}(t) & 0  &-w_\gamma^{1}(t) \\
-w_\gamma^{2}(t) &w_\gamma^{1}(t) & 0
\end{bmatrix}.
\end{align*}
From the above notation, the equation for the $\gamma$-agent is written by
\begin{align*}
\dot q_\gamma=p_\gamma=W_\gamma^t q_\gamma.
\end{align*}

Now, we consider the following system of ODEs:
\begin{align}\label{eq W}
\dot x(t)=W_\gamma^tx(t).
\end{align}
We can define  the corresponding solution operator $S_\gamma(x_0,t)=S_\gamma^tx_0: \bbs^2\times[0,\infty) \mapsto \bbs^2$ such that
\begin{align}\label{eq W2}S_\gamma^tx_0=x(t;x_0),\end{align}
where $x(t;x_0)$  is the solution to \eqref{eq W} subject to
\begin{align}\label{eq W3}x(0;x_0)=x_0\in \bbs^2.\end{align}
One can easily check that $S_\gamma^t$ is a rigid body motion on $\bbs^2$.

\begin{lemma}\label{lemma 3.1}Let  $x_\gamma(t)\in \mathbb{S}^2$  be the position of a $\gamma$-agent which is a $C^2$ function with respect to $t\geq 0$.
For the given $\gamma$-agent, the solution operator $S_\gamma^t$ defined above is represented by a matrix and the matrix product. Moreover, for any $x,y\in \mathbb{R}^3$,
\[\|x\|^2=\|S_\gamma^tx\|^2,\quad \langle x,y \rangle=\langle S_\gamma^tx,S_\gamma^ty \rangle.\]

\end{lemma}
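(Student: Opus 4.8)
The statement has two parts: (i) $S_\gamma^t$ is given by a matrix acting by multiplication, and (ii) this matrix is an isometry, preserving norms and inner products on $\mathbb{R}^3$. The plan is to exploit the linearity of the ODE \eqref{eq W} and the skew-symmetry of $W_\gamma^t$.

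For part (i), I would argue as follows. The system \eqref{eq W} is a linear ODE in $x(t)$ with time-dependent coefficient matrix $W_\gamma^t$. Since $x_\gamma$ is $C^2$, the vector $w_\gamma = q_\gamma \times p_\gamma$ is $C^1$, hence $W_\gamma^t$ is continuous (indeed $C^1$) in $t$. By the standard theory of linear systems of ODEs, the solution map $x_0 \mapsto x(t;x_0)$ is linear for each fixed $t$, so there is a matrix $\Phi(t) \in \mathbb{R}^{3\times 3}$ — the fundamental (state-transition) matrix, solving $\dot\Phi(t) = W_\gamma^t \Phi(t)$, $\Phi(0) = I$ — such that $S_\gamma^t x_0 = \Phi(t) x_0$. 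Note that although \eqref{eq W2}–\eqref{eq W3} only declare $S_\gamma^t$ on $\mathbb{S}^2$, the ODE makes sense for any $x_0 \in \mathbb{R}^3$, and this is what lets us speak of $\langle x,y\rangle$ for arbitrary $x,y \in \mathbb{R}^3$ in the conclusion.

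For part (ii), the key point is that $W_\gamma^t$ is skew-symmetric, $(W_\gamma^t)^T = -W_\gamma^t$. Given $x,y \in \mathbb{R}^3$, let $x(t) = \Phi(t)x$ and $y(t) = \Phi(t)y$ be the corresponding solutions. Then
\begin{align*}
\frac{d}{dt}\langle x(t),y(t)\rangle &= \langle W_\gamma^t x(t), y(t)\rangle + \langle x(t), W_\gamma^t y(t)\rangle\\
&= \langle x(t), ((W_\gamma^t)^T + W_\gamma^t) y(t)\rangle = 0.
\end{align*}
Hence $\langle x(t),y(t)\rangle = \langle x(0),y(0)\rangle = \langle x,y\rangle$ for all $t\geq 0$, which gives $\langle S_\gamma^t x, S_\gamma^t y\rangle = \langle x,y\rangle$; taking $y = x$ yields $\|S_\gamma^t x\|^2 = \|x\|^2$. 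Equivalently, this says $\Phi(t)^T\Phi(t) = I$, i.e. $\Phi(t) \in O(3)$, and since $\Phi(0) = I$ and $\Phi$ is continuous with nonzero determinant, in fact $\Phi(t) \in SO(3)$ — consistent with the earlier remark that $S_\gamma^t$ is a rigid body motion on $\mathbb{S}^2$.

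I do not anticipate a genuine obstacle here: the result is essentially the observation that the flow of a linear ODE with skew-symmetric generator is orthogonal. The only points requiring a little care are (a) justifying the existence of the state-transition matrix $\Phi(t)$ for all $t \geq 0$ — which follows from global existence for linear systems with continuous coefficients, itself already implicit in the well-posedness discussion of Section \ref{sec2} — and (b) being explicit that the extension from $\mathbb{S}^2$ to all of $\mathbb{R}^3$ is harmless, since the isometry property on the unit sphere together with linearity (or the direct differential computation above applied to arbitrary $x_0$) immediately propagates to all of $\mathbb{R}^3$ by homogeneity.
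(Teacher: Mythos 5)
Your proof is correct, and for the first assertion it takes a genuinely different (and more direct) route than the paper. The paper first shows, for two arbitrary points $x_1^0,x_2^0\in\mathbb{S}^2$, that $\tfrac{d}{dt}\|x_1(t)-x_2(t)\|^2=0$ by the skew-symmetry of $W_\gamma$, concludes that $S_\gamma^t$ is a rigid body motion of $\mathbb{S}^2$, and from that infers it is ``represented by a matrix'' --- a step that implicitly relies on the classification of distance-preserving maps of the sphere as orthogonal linear maps. You instead obtain the matrix representation for free from linear ODE theory: since \eqref{eq W} is linear with continuous coefficients, the solution map is the state-transition matrix $\Phi(t)$ with $\dot\Phi=W_\gamma^t\Phi$, $\Phi(0)=I$, which also makes the extension of $S_\gamma^t$ from $\mathbb{S}^2$ to all of $\mathbb{R}^3$ (needed to even state the inner-product identity for arbitrary $x,y$) completely transparent. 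For the isometry part the two arguments are essentially the same computation --- skew-symmetry kills the derivative of a bilinear quantity --- applied to $\langle x(t),y(t)\rangle$ in your case and to $\|x_1(t)-x_2(t)\|^2$ (whence the inner-product identity by polarization) in the paper's. Your version is the cleaner of the two; the paper's buys the geometric interpretation (rigid body motion of $\mathbb{S}^2$) explicitly along the way, which it uses as motivation in the surrounding discussion.
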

\begin{proof}
Let $x_\gamma(t)$ be a given $C^2$ function with $\|x_\gamma(t)\|=1$. We define the solution operator
$S_\gamma^t$ by \eqref{eq W}-\eqref{eq W3}. Take any two vectors $x_1^0$ and $x_2^0$ on $\mathbb{S}^2$. Let\phantom{\eqref{eq W2}}
\[x_1(t)=S_\gamma^tx_1^0,\quad x_2(t)=S_\gamma^tx_2^0.\]
Equivalently,
\[\dot x_1(t)=W_\gamma^t x_1(t),\quad \dot x_2(t)=W_\gamma^t x_2(t),\]
subject to
\[x_1(0)=x_1^0,\quad x_2(0)=x_2^0.\]

Then we have
\[\dot x_1(t)-\dot x_2(t)=W_\gamma^t (x_1(t)-x_2(t)).\]
This implies that

\begin{align*}
\frac{1}{2}\frac{d}{dt}\| x_1(t)- x_2(t)\|^2&=\langle x_1(t)-x_2(t), W_\gamma^t (x_1(t)-x_2(t))\rangle
.\end{align*}
We note that $W_\gamma$ is a skew symmetric matrix and this implies that
\begin{align*}\langle x_1(t)-x_2(t), W_\gamma^t (x_1(t)-x_2(t))\rangle&=
\langle W_\gamma^T (t)(x_1(t)-x_2(t)), x_1(t)-x_2(t)\rangle
\\&=-\langle W_\gamma (t)(x_1(t)-x_2(t)), x_1(t)-x_2(t)\rangle
\\&=-\langle  x_1(t)-x_2(t), W_\gamma (t)(x_1(t)-x_2(t))\rangle.
  \end{align*}
Therefore, we can obtain that
\[\langle x_1(t)-x_2(t), W_\gamma^t (x_1(t)-x_2(t))\rangle=0\]
 and
\begin{align*}
\frac{d}{dt}\| x_1(t)- x_2(t)\|^2=0.\end{align*}

Since we choose $x_1^0$ and $x_2^0$ arbitrary, $S_\gamma^t: \bbs^2 \mapsto \bbs^2$ is a rigid body motion of $\bbs^2$. This implies that $S_\gamma^t$ is represented by a matrix and the matrix product. Moreover, the following holds.
\[\|x\|^2=\|S_\gamma^tx\|^2,\quad \langle x,y \rangle=\langle S_\gamma^tx,S_\gamma^ty \rangle,\]
for any $x,y\in \mathbb{R}^3$.
\end{proof}

In $\bbr^3$, the agent's position can be decomposed into a sum of two vectors as described in \eqref{eq 2.25}-\eqref{eq 2.106}. Similarly, the agent's position on $\bbs^2$ is expressed as the composition of the translational operator $S_\gamma^t$ and the structural vector $x_i$:
\begin{align}\label{eq 3.101}
q_i(t)=S_\gamma^tx_i(t).
\end{align}
Notice that  $x_\gamma(t):=q_\gamma(0)$ is a time-independent fixed point on $\mathbb{S}^2$ and satisfies
\begin{align}
\label{eq 3.102}
q_\gamma(t)=S_\gamma^tx_\gamma(t).
\end{align}
In the proposition below, we derive a second-order system of $x_i$ in the moving frame.

%
%
%

\begin{proposition}\label{prop3.2}Let $(q_\gamma(t),p_\gamma(t))$ be a given $\gamma$-agent satisfying
\[\dot q_\gamma= p_\gamma,\]
where $q_\gamma\in \bbs^2$ and $p_\gamma\in T_x\bbs^2$. Let $S_\gamma^t$ be the solution operator defined by \eqref{eq W}-\eqref{eq W3}.
If \eqref{eq 3.101} and \eqref{eq 3.102} hold, then the followings are equivalent.
\begin{enumerate}
  \item  $\{(x_i(t),v_i(t))\}_{i=1}^N$ satisfies  the following structural system of ODEs:
\begin{align}\label{eq structure}
\begin{aligned}
\dot{x}_i&=v_i,\\
\dot{v}_i&=-\frac{\|v_i\|^2}{\|x_i\|^2}x_i + \sum_{j=1}^N\frac{\sigma_{ij}}{N}(\|x_i\|^2 x_j - \langle x_i,x_j \rangle  x_i)\\
&\qquad\qquad\qquad\qquad+c_q(\|x_i\|^2 x_\gamma - \langle x_i,x_\gamma \rangle  x_i)
-c_pv_i
+ A_i,
\end{aligned}
\end{align}
subject to initial data $x_i(0)\in \bbs^2$, $v_i(0)\in T_{x_i(0)}\bbs^2$ for all $i\in\{1,\ldots,N\}$.
  \item  $\{(q_i(t),p_i(t))\}_{i=1}^N$ is the solution to main system \eqref{main1} subject to \eqref{initial} with
\begin{align}\label{eq 3.65}
U_i=2\langle w_\gamma, q_i\rangle ( q_i\times  p_i )+\dot w_\gamma(t)\times  q_i+
S_\gamma^tA_i.
\end{align}

\end{enumerate}
\end{proposition}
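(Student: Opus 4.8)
The plan is to exploit Lemma \ref{lemma 3.1}: there $S_\gamma^t$ is realized as a time-dependent orthogonal matrix, so $q_i(t)=S_\gamma^t x_i(t)$ (with $q_\gamma(t)=S_\gamma^t x_\gamma(t)$ and $x_\gamma(t)\equiv q_\gamma(0)$) is an invertible, inner-product-preserving change of variables between $\{(x_i,v_i)\}$ and $\{(q_i,p_i)\}$ under which $v_i=\dot x_i$ corresponds to $p_i=\dot q_i$. Both (1) and (2) are ODE systems with locally Lipschitz right-hand sides and the same admissible initial data (see Theorem \ref{thm0}), so once this change of variables is shown to carry one system onto the other, the equivalence follows from uniqueness of solutions. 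I will therefore verify only the implication (1)$\Rightarrow$(2); the converse is the same computation read backwards, since under \eqref{eq 3.101} one has $x_i=(S_\gamma^t)^{-1}q_i$.

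First I would record the ingredients: the matrix identity $\frac{d}{dt}S_\gamma^t=W_\gamma^t S_\gamma^t$, immediate from \eqref{eq W}--\eqref{eq W3} and linearity of the ODE; the elementary formulas $W_\gamma^t y=w_\gamma\times y$, $(W_\gamma^t)^2 y=\langle w_\gamma,y\rangle w_\gamma-\|w_\gamma\|^2 y$ and $\dot W_\gamma^t y=\dot w_\gamma\times y$; the invariances $\|S_\gamma^t y\|=\|y\|$, $\langle S_\gamma^t y,S_\gamma^t z\rangle=\langle y,z\rangle$ and $S_\gamma^t x_\gamma=q_\gamma$ from Lemma \ref{lemma 3.1}, which give $\|x_i\|=\|q_i\|=1$, $\langle x_i,x_j\rangle=\langle q_i,q_j\rangle$ and $\sigma_{ij}$ unchanged since $\|x_i-x_j\|=\|q_i-q_j\|$; the fact, proved exactly as in Theorem \ref{thm0}, that \eqref{eq structure} preserves $\|x_i\|=1$ and $\langle x_i,v_i\rangle=0$; and the triple-product identity
\[
(q_\gamma\times p_\gamma)\times q_i=\langle q_\gamma,q_i\rangle p_\gamma-\langle p_\gamma,q_i\rangle q_\gamma=\hR_{q_\gamma\rightarrow q_i}(p_\gamma),
\]
in which the second equality uses $\langle q_\gamma,p_\gamma\rangle=0$ and the definition \eqref{eqn:hrot} of $\hR$.

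Assuming (1), differentiating $q_i=S_\gamma^t x_i$ gives $p_i=\dot q_i=W_\gamma^t q_i+S_\gamma^t v_i$, hence $S_\gamma^t v_i=p_i-w_\gamma\times q_i$; this vector is orthogonal to $q_i$ (because $\langle x_i,v_i\rangle=0$), so $\langle q_i,p_i\rangle=0$ and $\|v_i\|^2=\|p_i-w_\gamma\times q_i\|^2$. Differentiating once more,
\[
\dot p_i=\dot w_\gamma\times q_i+2\,(w_\gamma\times p_i)-w_\gamma\times(w_\gamma\times q_i)+S_\gamma^t\dot v_i,
\]
and I then substitute \eqref{eq structure} for $\dot v_i$ and transport $S_\gamma^t$ through term by term with the invariances above: the centripetal term becomes $-\|p_i-w_\gamma\times q_i\|^2 q_i$, the cooperative and bonding terms reproduce those of \eqref{main1} verbatim, $-c_pv_i$ becomes $-c_p(p_i-w_\gamma\times q_i)$, and $A_i$ becomes $S_\gamma^t A_i$. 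By the triple-product identity, $c_p(w_\gamma\times q_i)$ recombines with $-c_p(p_i-w_\gamma\times q_i)$ into the alignment term $c_p(\hR_{q_\gamma\rightarrow q_i}(p_\gamma)-p_i)$ of \eqref{main1}. Once all the \eqref{main1}-terms are accounted for, what is left over is $\dot w_\gamma\times q_i+2(w_\gamma\times p_i)-w_\gamma\times(w_\gamma\times q_i)+S_\gamma^t A_i$ plus the centripetal discrepancy $(\|p_i\|^2-\|p_i-w_\gamma\times q_i\|^2)\,q_i$, and I expect these to collapse --- via the triple-product expansions of $w_\gamma=q_\gamma\times p_\gamma$ together with $\|q_i\|=1$ and $\langle q_i,p_i\rangle=0$ --- exactly onto $U_i=2\langle w_\gamma,q_i\rangle(q_i\times p_i)+\dot w_\gamma\times q_i+S_\gamma^t A_i$, which is \eqref{eq 3.65}.

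The hard part is precisely this last reduction: one has to check that the three ``frame'' terms $\dot W_\gamma^t q_i$, $2W_\gamma^t p_i$ and $-(W_\gamma^t)^2 q_i$ produced by differentiating the rotation twice, together with the mismatch between the centripetal coefficients $\|v_i\|^2/\|x_i\|^2$ and $\|p_i\|^2/\|q_i\|^2$ of the two systems, reassemble into the prescribed $U_i$ and leave nothing over. That single computation is what forces the particular algebraic form of $U_i$ in \eqref{U1}, and it is the only place where $\langle q_\gamma,p_\gamma\rangle=0$, the unit-norm and tangency constraints, and the vector triple-product rule must all be used together; the rest of the argument --- the existence-uniqueness framework, the inner-product invariance, and the term-by-term transport of \eqref{eq structure} --- is routine given Lemma \ref{lemma 3.1} and Theorem \ref{thm0}.
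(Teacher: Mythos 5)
Your strategy is the same as the paper's: realize $S_\gamma^t$ as an orthogonal moving frame via Lemma \ref{lemma 3.1}, differentiate $q_i=S_\gamma^t x_i$ twice, transport the structural equation term by term, and identify the leftover frame terms with $U_i$, the converse being the same computation read backwards (or uniqueness). The preparatory steps you do verify --- the invariances, the tangency $\langle q_i,p_i\rangle=0$, the identity $w_\gamma\times q_i=\hR_{q_\gamma\to q_i}(p_\gamma)$, and the formula $\dot p_i=\dot w_\gamma\times q_i+2(w_\gamma\times p_i)-w_\gamma\times(w_\gamma\times q_i)+S_\gamma^t\dot v_i$ --- are correct but routine. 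However, you stop at exactly the step that constitutes the proposition: the claim that the frame terms together with the centripetal discrepancy ``collapse exactly onto $U_i$'' is asserted as an expectation rather than proved, and since the specific form of $U_i$ in \eqref{eq 3.65} is the entire content of the statement, the proposal as written does not establish it.

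Moreover, the deferred computation is not a formality. After matching the terms of \eqref{main1}, what you must show is
\begin{align*}
\bigl(2\langle p_i,\,w_\gamma\times q_i\rangle-\|w_\gamma\times q_i\|^2\bigr)\,q_i+2\,w_\gamma\times p_i-w_\gamma\times(w_\gamma\times q_i)
=2\langle w_\gamma,q_i\rangle\,(q_i\times p_i).
\end{align*}
Decompose $w_\gamma=\alpha q_i+\beta p_i+\gamma\,(q_i\times p_i)$, which is legitimate since $\|q_i\|=1$ and $q_i\perp p_i$. The pair $2\,w_\gamma\times p_i+2\langle p_i,w_\gamma\times q_i\rangle q_i$ does reduce to $2\alpha\,(q_i\times p_i)$, but the remaining pair gives $-\|w_\gamma\times q_i\|^2q_i-w_\gamma\times(w_\gamma\times q_i)=\langle w_\gamma,q_i\rangle^2q_i-\langle w_\gamma,q_i\rangle\,w_\gamma=-\alpha\beta\,p_i-\alpha\gamma\,(q_i\times p_i)$, so the left-hand side equals $2\langle w_\gamma,q_i\rangle(q_i\times p_i)-\langle w_\gamma,q_i\rangle\bigl(w_\gamma-\langle w_\gamma,q_i\rangle q_i\bigr)$. (Concretely, $q_i=e_1$, $p_i=e_2$, $w_\gamma=e_1+e_2$ yields $2e_3-e_2$ on the left but $2e_3$ on the right.) The residual $-\langle w_\gamma,q_i\rangle\bigl(w_\gamma-\langle w_\gamma,q_i\rangle q_i\bigr)$ vanishes only when $w_\gamma\perp q_i$ or $w_\gamma$ is parallel to $q_i$, neither of which is guaranteed along the flow. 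So you must either supply an argument that this term is absent --- the paper asserts the same identity without showing the intermediate algebra --- or accept that $U_i$ has to absorb it; in either case the one step you left to ``expectation'' is precisely where the proof can fail, and it needs to be written out.
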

\begin{proof}
For any $x_0\in \bbs^2$, we consider $x(t)=S_\gamma^tx_0$. Then
 \begin{align}\label{eq 3.0}\dot S_\gamma^tx_0=\frac{d}{dt}(S_\gamma^tx_0)=\dot x(t)= W_\gamma^tx(t)= W_\gamma^tS_\gamma^tx_0.\end{align}
Since $x_0$ is arbitrary and $S_\gamma^t$ is a $3\times3$ matrix by Lemma \ref{lemma 3.1}, we have
\begin{align}\label{eq 3.2}\dot S_\gamma^t=W_\gamma^tS_\gamma^t.\end{align}
We note that for any $x\in \mathbb{R}^3$,
\begin{align}\label{eq cross}
W_\gamma^t x=w_\gamma\times x.
\end{align}

We first prove that if $\{(x_i(t),v_i(t))\}_{i=1}^N$ satisfies \eqref{eq structure}, then $\{(q_i(t),\dot q_i(t))\}_{i=1}^N$ is the solution to the main system with \eqref{eq 3.65}, where  $ q_i(t)=S_\gamma^tx_i(t)$.
By the definition,
\[\frac{d}{dt}  q_i=\dot S_\gamma^tx_i+ S_\gamma^t\dot x_i.\]
Motivated by the above, we naturally define the corresponding velocity as follows.
\begin{align}\label{eq 3.10}
  p_i=\dot S_\gamma^tx_i+ S_\gamma^t\dot x_i.
\end{align}
Thus, we have
\begin{align*}
\frac{d}{dt}  p_i=\ddot S_\gamma^tx_i+ 2\dot S_\gamma^t\dot x_i+ S_\gamma^t\ddot x_i.
\end{align*}
By \eqref{eq structure} and Lemma \ref{lemma 3.1},
\begin{align}\label{eq 3.1}
\begin{aligned}
S_\gamma^t\ddot x_i
=&-\frac{\|v_i\|^2}{\|x_i\|^2}S_\gamma^t x_i + \sum_{j=1}^N\frac{\sigma_{ij}}{N}\left[\|x_i\|^2 S_\gamma^t x_j - \langle x_i,x_j \rangle  S_\gamma^tx_i\right]\\
&\quad+c_q\left[\|x_i\|^2 S_\gamma^tx_\gamma - \langle x_i,x_\gamma \rangle S_\gamma^t x_i\right]-c_p S_\gamma^tv_i+
S_\gamma^tA_i.
\end{aligned}
\end{align}
From the property of $S_\gamma^t$ in  Lemma \ref{lemma 3.1}, it follows that
\[\|x_i\|^2=\|S_\gamma^tx_i\|^2,\quad \langle x_i,x_j \rangle=\langle S_\gamma^tx_i,S_\gamma^tx_j \rangle.\]

As \cite{C-K-S,C-K-S1,C-K-S2}, we can easily prove that
\begin{align}\label{perp}
x_i(t)\in \bbs^2,\quad v_i(t)\in T_{x_i(t)}\bbs^2,\quad \mbox{ for all}\quad t\geq0,~  i\in\{1,\ldots,N\}.\end{align}
By this modulus conservation and \eqref{eq 3.1},
\begin{align}\begin{aligned}
\label{eq s1}
S_\gamma^t\ddot x_i
=&-\|v_i\|^2  q_i + \sum_{j=1}^N\frac{\sigma_{ij}}{N}\left[\|  q_i\|^2   q_j - \langle   q_i,  q_j \rangle    q_i\right]\\
&\quad+c_q\left[\|  q_i\|^2   q_\gamma - \langle   q_i,  q_\gamma \rangle   q_i\right]+c_p\left[W_\gamma^t  q_i -  p_i\right]+
S_\gamma^tA_i.
\end{aligned}\end{align}
Here, we used \eqref{eq 3.2} and \eqref{eq 3.10} to obtain
\begin{align}\label{eq 3.3}- S_\gamma^tv_i=W_\gamma^t  q_i -  p_i.\end{align}

By \eqref{eq 3.0}, \eqref{eq 3.2}, \eqref{eq 3.3} and the definition of $  q_i$ and $  p_i$,
\begin{align}\begin{aligned}\label{eq 3.161}
\ddot S_\gamma^tx_i+ 2\dot S_\gamma^t\dot x_i&=\dot W_\gamma^tS_\gamma^tx_i +W_\gamma^t\dot S_\gamma^tx_i+ 2\dot S_\gamma^t\dot x_i\\
&=\dot W_\gamma^t  q_i +W_\gamma^tW_\gamma^t   q_i+ 2W_\gamma^t S_\gamma^t\dot x_i\\
&=\dot W_\gamma^t  q_i +W_\gamma^tW_\gamma^t   q_i+ 2W_\gamma^t (  p_i-W_\gamma^t)  q_i
)
\\
&=\dot W_\gamma^t  q_i -W_\gamma^tW_\gamma^t   q_i+ 2W_\gamma^t  p_i
.
\end{aligned}
\end{align}
Clearly, by the skew symmetric property of $W_\gamma$,
\begin{align*}
\|  p_i\|^2&=\|W_\gamma^tS_\gamma^tx_i\|^2+2\langle W_\gamma^tS_\gamma^tx_i,S_\gamma^t\dot x_i\rangle + \|S_\gamma^t\dot x_i\|^2\\
&=\|W_\gamma^t   q_i\|^2+2\langle W_\gamma^t   q_i,  p_i -W_\gamma   q_i\rangle + \|v_i\|^2
\\
&=\langle    q_i,W_\gamma^tW_\gamma^t   q_i\rangle -2\langle    q_i,W_\gamma^t  p_i\rangle + \|v_i\|^2.
\end{align*}
This implies that
\begin{align}\label{eq 3.162}
-\|v_i\|^2
=-\|  p_i\|^2+\langle    q_i,W_\gamma^tW_\gamma^t   q_i\rangle -2\langle    q_i,W_\gamma^t  p_i\rangle .
\end{align}
By \eqref{eq 3.161} and \eqref{eq 3.162}, we have
\begin{align}\begin{aligned}\label{eq s2}
\ddot S_\gamma^tx_i+ 2\dot S_\gamma^t\dot x_i-\|v_i\|^2  q_i
&=-\|  p_i\|^2  q_i
+\langle    q_i,W_\gamma^tW_\gamma^t   q_i\rangle  q_i
-W_\gamma^tW_\gamma^t   q_i
\\
&\quad-2\langle   q_i,W_\gamma^t   p_i\rangle  q_i
+ 2W_\gamma^t  p_i
+\dot W_\gamma^t  q_i
.\end{aligned}
\end{align}

Thus, by  \eqref{eq s1} and  \eqref{eq s2},
\begin{align}\begin{aligned}\label{eq 3.21}
\dot  p&=\ddot S_\gamma^tx_i+ 2\dot S_\gamma^t\dot x_i+S_\gamma^t\ddot x_i
\\
&=-\|  p_i\|^2  q_i + \sum_{j=1}^N\frac{\sigma_{ij}}{N}(\|  q_i\|^2   q_j - \langle   q_i,  q_j \rangle    q_i)+c_q(\|  q_i\|^2   q_\gamma - \langle   q_i,  q_\gamma \rangle   q_i)\\&\quad+c_p(W_\gamma^t  q_i -  p_i)+\langle    q_i,W_\gamma^tW_\gamma^t   q_i\rangle  q_i
-W_\gamma^tW_\gamma^t   q_i\\
&\quad
-2\langle   q_i,W_\gamma^t   p_i\rangle  q_i
+ 2W_\gamma^t  p_i
+\dot W_\gamma^t  q_i
+
S_\gamma^tA_i
.
\end{aligned}
\end{align}
We note that for any $x\in \mathbb{R}^3$,
\begin{align}\label{eq cross}
W_\gamma^t x=w_\gamma\times x.
\end{align}
From \eqref{eq 3.21}-\eqref{eq cross} and the modulus conservation property of $S_\gamma^t$ with $x_i(t)\in \bbs^2$, it follows that
\begin{align*}
\dot  p
&=-\frac{\|  p_i\|^2}{\|q_i\|^2}  q_i + \sum_{j=1}^N\frac{\sigma_{ij}}{N}(\|  q_i\|^2   q_j - \langle   q_i,  q_j \rangle    q_i)+c_q(\|  q_i\|^2   q_\gamma - \langle   q_i,  q_\gamma \rangle   q_i)\\
&\quad+c_p(w_\gamma\times  q_i -  p_i)+2\langle w_\gamma,  q_i\rangle (  q_i\times   p_i )+\dot w_\gamma\times   q_i+
S_\gamma^tA_i.
\end{align*}

Now, if we choose $A_i$ such as
\[2\langle w_\gamma,  q_i\rangle (  q_i\times   p_i )+\dot w_\gamma(t)\times   q_i+
S_\gamma^tA_i=0,
\]
then our model corresponds to $u_i=0$ case in the flat space case, and if we choose $A_i=0$ then our model corresponds to $u_i=u_\gamma$ case in the flat space case.
From the uniqueness of the solution to the main system, we obtain the desired result.
\\

We next prove that if $\{(q_i(t),p_i(t))\}_{i=1}^N$ is the solution to the main system  with \eqref{eq 3.65}, then $\{(x_i(t),\dot x_i(t))\}_{i=1}^N$ satisfies \eqref{eq structure}, where $ x_i(t)=S_\gamma^{-1}(t)q_i(t)$. By the first equation of \eqref{main1}, we have
\begin{align}\label{eq 3.11}
p_i=\dot q_i=\dot S_\gamma^tx_i+ S_\gamma^t\dot x_i=W_\gamma^t S_\gamma^tx_i+ S_\gamma^t\dot x_i.
\end{align}
This implies that
\begin{align}\begin{aligned}\label{eq 3.120}
\ddot q_i&=\ddot S_\gamma^tx_i+2\dot S_\gamma^t\dot x_i+ S_\gamma^t\ddot
x_i\\
&=\dot W_\gamma^tS_\gamma^tx_i+W_\gamma^tW_\gamma^tS_\gamma^tx_i+2W_\gamma^tS_\gamma^t\dot x_i+ S_\gamma^t\ddot
x_i.\end{aligned}
\end{align}
By \eqref{eq 3.11}, we have
\begin{align}\begin{aligned}
\label{eq 3.121}
\|  p_i\|^2&=\|W_\gamma^tS_\gamma^tx_i\|^2+2\langle W_\gamma^tS_\gamma^tx_i,S_\gamma^t\dot x_i\rangle + \|S_\gamma^t\dot x_i\|^2\\
&=\|W_\gamma^t   q_i\|^2+2\langle W_\gamma^t   q_i,  p_i -W_\gamma   q_i\rangle + \|v_i\|^2
\\
&=\langle    q_i,W_\gamma^tW_\gamma^t   q_i\rangle -2\langle    q_i,W_\gamma^t  p_i\rangle + \|v_i\|^2.
\end{aligned}\end{align}
The second equation in \eqref{main1} and $q_i(t)\in \bbs^2$ imply that
\begin{align*}
\begin{aligned}
\ddot q_i
&=-\|  p_i\|^2  q_i + \sum_{j=1}^N\frac{\sigma_{ij}}{N}(\|  q_i\|^2   q_j - \langle   q_i,  q_j \rangle  q_i) +c_q(\|  q_i\|^2   q_\gamma - \langle   q_i,  q_\gamma \rangle   q_i)\\
&\quad+c_p(\hR_{q_\gamma \rightarrow q_i}(p_\gamma) -  p_i)+U_i
,\\
&=-\|  p_i\|^2  S_\gamma^t x_i
 + \sum_{j=1}^N\frac{\sigma_{ij}}{N}(\|   S_\gamma^t x_i\|^2   S_\gamma^t x_j
 - \langle   S_\gamma^t x_i
,  S_\gamma^t x_j
 \rangle  S_\gamma^t x_i
)\\
&\quad +c_q(\|  S_\gamma^t x_i
\|^2    S_\gamma^t x_\gamma - \langle   S_\gamma^t x_i
,   S_\gamma^t x_\gamma\rangle   S_\gamma^t x_i
)+c_p(\hR_{q_\gamma \rightarrow q_i}(p_\gamma) -  p_i)+U_i.
\end{aligned}
\end{align*}
From the property of $S_\gamma^t$ in Lemma \ref{lemma 3.1}, it follows that
\begin{align}
\begin{aligned}\label{eq 3.119}
\ddot q_i
&=-\|  p_i\|^2  S_\gamma^t x_i
 + \sum_{j=1}^N\frac{\sigma_{ij}}{N}(\|    x_i\|^2   S_\gamma^t x_j
 - \langle   x_i
,  x_j
 \rangle  S_\gamma^t x_i
)\\
&\quad +c_q\left(\|   x_i
\|^2    S_\gamma^t x_\gamma - \langle   x_i
,    x_\gamma\rangle   S_\gamma^t x_i
\right)+c_p(\hR_{q_\gamma \rightarrow q_i}(p_\gamma) -  p_i)+U_i
.
\end{aligned}
\end{align}

By \eqref{eq 3.120}--\eqref{eq 3.119}, \phantom{\eqref{eq 3.121}}
\begin{align*}
\begin{aligned} S_\gamma^t\ddot
x_i
&=-\left[
\dot W_\gamma^tS_\gamma^tx_i+W_\gamma^tW_\gamma^tS_\gamma^tx_i+2W_\gamma^tS_\gamma^t\dot x_i\right] \\&\quad-\|  p_i\|^2  S_\gamma^t x_i+ \sum_{j=1}^N\frac{\sigma_{ij}}{N}(\|    x_i\|^2   S_\gamma^t x_j
 - \langle   x_i
,  x_j
 \rangle  S_\gamma^t x_i
)\\
&\quad +c_q\left(\|   x_i
\|^2    S_\gamma^t x_\gamma - \langle   x_i
,    x_\gamma\rangle   S_\gamma^t x_i
\right)+c_p(\hR_{q_\gamma \rightarrow q_i}(p_\gamma) -  p_i)+U_i
\\&=-\left[
\dot W_\gamma^tS_\gamma^tx_i+W_\gamma^tW_\gamma^tS_\gamma^tx_i+2W_\gamma^tS_\gamma^t\dot x_i\right]\\
&\quad-\left(\langle    q_i,W_\gamma^tW_\gamma^t   q_i\rangle -2\langle    q_i,W_\gamma^t  p_i\rangle + \|v_i\|^2\right) S_\gamma^t x_i
+ \sum_{j=1}^N\frac{\sigma_{ij}}{N}(\|    x_i\|^2   S_\gamma^t x_j
 - \langle   x_i
,  x_j
 \rangle  S_\gamma^t x_i
)\\
&\quad +c_q\Big(\|   x_i
\|^2    S_\gamma^t x_\gamma - \langle   x_i
,    x_\gamma\rangle   S_\gamma^t x_i
\Big)+c_p(\hR_{q_\gamma \rightarrow q_i}(p_\gamma) -  p_i)+U_i
.
\end{aligned}
\end{align*}
Note that
\begin{align*}
&-\left[
\dot W_\gamma^tS_\gamma^tx_i+W_\gamma^tW_\gamma^tS_\gamma^tx_i+2W_\gamma^tS_\gamma^t\dot x_i\right]-\Big(\langle    q_i,W_\gamma^tW_\gamma^t   q_i\rangle -2\langle    q_i,W_\gamma^t  p_i\rangle \Big) S_\gamma^t x_i
\\&
\qquad\qquad=-\left[
\dot W_\gamma^tq_i+W_\gamma^tW_\gamma^tq_i+2W_\gamma^t(p_i-W_\gamma q_i)\right]
-\Big(\langle    q_i,W_\gamma^tW_\gamma^t   q_i\rangle -2\langle    q_i,W_\gamma^t  p_i\rangle \Big) q_i
\\
&\qquad\qquad=-\langle    q_i,W_\gamma^tW_\gamma^t   q_i\rangle  q_i
+W_\gamma^tW_\gamma^t   q_i
+2\langle   q_i,W_\gamma^t   p_i\rangle  q_i
- 2W_\gamma^t  p_i
-\dot W_\gamma^t  q_i\\
&\qquad\qquad=-2\langle w_\gamma,  q_i\rangle (  q_i\times   p_i )-\dot w_\gamma\times   q_i.
\end{align*}
Therefore, by the property of $S_\gamma$ and the above two equalities,  we obtain that $\{(x_i(t),v_i(t))\}_{i=1}^N$ satisfies \eqref{eq structure} with \eqref{eq 3.65}.

\end{proof}

%

\section{Reduction to a linearized system with a negative definite coefficient matrix}\label{sec4}
\setcounter{equation}{0}
In this section, we derive a linearized system from the structural system in  \eqref{eq structure}. We define auxiliary variables motivated by the flat case in Section \ref{sec2} and we extract leading order terms using $\|q_i(t)\|=1$ and $\langle q_i(t),p_i(t)\rangle =0$ for all $t\geq 0$ and $i\in \{1,\ldots,N\}$. In the system with respect to auxiliary variables,  leading order terms form an inhomogeneous linear system of ODEs with a negative definite coefficient matrix.

We consider the following system of ODEs with $\sigma_{ij}=\sigma>0$ and  $c_q,c_p>0$.
\begin{align}
\begin{aligned}\label{TR_eq}
\dot{x}_i&=v_i,\\
\dot{v}_i&=-\frac{\|v_i\|^2}{\|x_i\|^2}x_i + \sum_{j=1}^N\frac{\sigma}{N}(\|x_i\|^2 x_j - \langle x_i,x_j \rangle  x_i)\\
&\qquad\qquad\qquad\qquad\qquad+c_q(\|x_i\|^2 x_\gamma - \langle x_i,x_\gamma \rangle  x_i)
-c_pv_i+A_i.
\end{aligned}
\end{align}
For consistency, we additionally assume that for all $t\geq 0$,
\[\langle A_i(t),x_i(t)\rangle =0,\quad \mbox{for all} ~i\in \{1,\ldots,N\},\]
and the initial data satisfies
\[\|x_i(0)\|=1\quad \mbox{and}\quad  \langle v_i(0),x_i(0)\rangle =0,\quad \mbox{for all} ~i\in \{1,\ldots,N\}.\]

We now define the auxiliary variables as follows.
\begin{align*}
X_\gamma^1=\frac{1}{N}\sum_{i=1}^N\|x_i-x_\gamma\|^2,
\quad
 X_\gamma^2=\frac{1}{N}\sum_{i=1}^N\langle x_i-x_\gamma,v_i\rangle,
  \quad
  X_\gamma^3=\frac{1}{N}\sum_{i=1}^N\langle v_i,v_i\rangle,
\end{align*}
and
\begin{gather*}
X^1= \frac{1}{N^2}\sum_{i,k=1}^N\langle x_i-x_k,x_i-x_k\rangle, \quad
X^2= \frac{1}{N^2}\sum_{i,k=1}^N\langle v_i-v_k,x_i-x_k\rangle,\\
X^3= \frac{1}{N^2}\sum_{i,k=1}^N\langle v_i-v_k,v_i-v_k\rangle.
\end{gather*}
We also define the corresponding inhomogeneous terms as follows.
\begin{align*}
F_\gamma^1&=0,\\
 F_\gamma^2&=-\frac{1}{N}\sum_{i=1}^N\frac{\|v_i\|^2 }{2}\| x_i-x_\gamma\|^2
+\frac{\sigma}{4N^2}\sum_{i,j=1}^N\| x_i-x_j\|^2 \|x_i-x_\gamma\|^2
\\&\quad +\frac{c_q}{4N}\sum_{i=1}^N\|x_i-x_\gamma\|^4
+\frac{1}{N}\sum_{i=1}^N\langle x_i-x_\gamma,A_i\rangle,\\
  F_\gamma^3&=\frac{2}{N}\sum_{i=1}^N\langle v_i,A_i\rangle,
\end{align*}
and
\begin{align*}
F^1&= 0,\\
F^2&= -\frac{1}{N^2}\sum_{i,k=1}^N\frac{\|v_i\|^2+\|v_k\|^2}{2}\| x_i-x_k\|^2
+\frac{\sigma}{2N^3}\sum_{i,j,k=1}^N\|x_i-x_j\|^2\|x_i-x_k\|^2
\\&\quad +\frac{c_q}{2N^2}\sum_{i,k=1}^N  \|x_\gamma -  x_i\|^2\|x_i-x_k\|^2
+\frac{1}{N^2}\sum_{i,k=1}^N\langle A_i-A_k,x_i-x_k\rangle
,\\
F^3&= \frac{2}{N^2}\sum_{i,k=1}^N\left(\|v_i\|^2\langle x_i,v_k\rangle +\|v_k\|^2\langle x_x,v_i\rangle\right)+\frac{2\sigma}{N^3}\sum_{i,j,k=1}^N\|x_i-x_j\|^2\langle x_i,v_k\rangle
\\
&\quad +\frac{c_q}{N^2}\sum_{i,k=1}^N \|x_\gamma -  x_i\|^2\langle x_i,v_k\rangle
+\frac{2}{N^2}\sum_{i,k=1}^N\langle A_i-A_k,v_i-v_k\rangle.
\end{align*}

Let
\begin{gather}\label{F}
X=(X_\gamma^1,X_\gamma^2,X_\gamma^3,X^1,X^2,X^3)^T,\quad  F=(F_\gamma^1,F_\gamma^2,F_\gamma^3,F^1,F^2,F^3)^T.
\end{gather}
\begin{proposition}\label{prop 4.1}
For the auxiliary variable  $X$ and the inhomogeneous term $F$, the following holds.
\[\dot X=M X+F,\]
where the coefficient matrix $M$ is given by
\begin{align*}
M=
\begin{bmatrix}
0 &2 &0&0&0&0  \\
 -c_q & -c_p  &1&-\sigma/2&0&0 \\
0 &-2c_q & -2c_p&0&\sigma&0\\
0&0&0&0 &2 &0 \\
 0&0&0& -(c_q+\sigma) & -c_p  &1 \\
0&0&0&0 &-2(c_q+\sigma) & -2c_p\\
\end{bmatrix}.
\end{align*}
\end{proposition}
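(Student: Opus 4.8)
The plan is to differentiate each of the six auxiliary variables directly along the structural dynamics \eqref{TR_eq}, using the two conserved quantities $\|x_i(t)\|=1$ and $\langle x_i(t),v_i(t)\rangle=0$ to separate the genuinely linear contributions (which build the matrix $M$) from the remaining higher-order contributions (which are collected into $F$). First I would handle the three ``$\gamma$'' variables. From $\dot x_i=v_i$ we get $\dot X_\gamma^1=\frac{2}{N}\sum_i\langle x_i-x_\gamma,v_i\rangle=2X_\gamma^2$, which is the first row. For $\dot X_\gamma^2$ I would expand $\frac{d}{dt}\langle x_i-x_\gamma,v_i\rangle=\langle v_i-v_\gamma,v_i\rangle+\langle x_i-x_\gamma,\dot v_i\rangle$; the first piece needs $v_\gamma$ rewritten in terms of $x_i,v_i$ — here I would use $\langle x_i,v_i\rangle=0$ and $\|x_i\|=1$ so that $\langle x_i-x_\gamma,v_\gamma\rangle$ combines with the centripetal term. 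The key algebraic identity is that, since $\langle x_i,v_i\rangle=0$, one has $\langle x_i-x_\gamma,-\|v_i\|^2x_i\rangle=-\|v_i\|^2(1-\langle x_i,x_\gamma\rangle)=-\tfrac{\|v_i\|^2}{2}\|x_i-x_\gamma\|^2$, which is exactly the first term of $F_\gamma^2$; similarly the flocking and bonding terms, when paired with $x_i-x_\gamma$, produce a linear part ($-\sigma/2\cdot X^1$ and $-c_qX_\gamma^1$ after using $\|x_i\|=\|x_\gamma\|=1$ and the polarization identity $\langle x_i,x_j\rangle=1-\tfrac12\|x_i-x_j\|^2$) plus the quartic remainders appearing in $F_\gamma^2$. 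The $A_i$ term gives the last piece of $F_\gamma^2$. This yields the second row $(-c_q,-c_p,1,-\sigma/2,0,0)$.

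Next I would compute $\dot X_\gamma^3=\frac{2}{N}\sum_i\langle v_i,\dot v_i\rangle$. Because $\langle v_i,x_i\rangle=0$, the centripetal term $-\|v_i\|^2x_i$ contributes nothing; the term $-c_pv_i$ contributes $-2c_pX_\gamma^3$; the flocking term $\sigma(\|x_i\|^2x_j-\langle x_i,x_j\rangle x_i)=\sigma(x_j-\langle x_i,x_j\rangle x_i)$ paired with $v_i$ reduces (using $\langle x_i,v_i\rangle=0$) to $\sigma\langle x_j,v_i\rangle$, and summing symmetrically over $i,j$ and rewriting $x_j=x_\gamma+(x_j-x_i)+(x_i-x_\gamma)$ isolates the linear part $-2c_qX_\gamma^2$ coming jointly from flocking and bonding — the bonding term $c_q(x_\gamma-\langle x_i,x_\gamma\rangle x_i)$ against $v_i$ gives $c_q\langle x_\gamma,v_i\rangle=-c_q\langle x_i-x_\gamma,v_i\rangle$ after adding $0=c_q\langle x_i,v_i\rangle$ — with the rest going into $F_\gamma^3$, and the $A_i$ term giving $\frac{2}{N}\sum_i\langle v_i,A_i\rangle$. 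Here one must be careful to check that no $X^2$ or $X^3$ dependence leaks into the third row; the reason it does not is that the flocking contribution $\frac{\sigma}{N^2}\sum_{i,j}\langle x_j,v_i\rangle$ is symmetric and telescopes against the diagonal-like structure, leaving only $X_\gamma^2$-type linear terms. This gives the third row $(0,-2c_q,-2c_p,0,\sigma,0)$.

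The lower-right $3\times3$ block is obtained in the same way but starting from the differenced dynamics $\dot x_i-\dot x_k=v_i-v_k$ and $\dot v_i-\dot v_k$; the crucial point is that the bonding force $c_q(x_\gamma-\langle x_i,x_\gamma\rangle x_i)$ and the flocking force together, when differenced and paired with $x_i-x_k$ or $v_i-v_k$, combine into a single linear coefficient $-(c_q+\sigma)$ rather than $-c_q$ and $-\sigma$ separately — this is why the $(5,4)$ and $(6,5)$ entries are $-(c_q+\sigma)$ and $-2(c_q+\sigma)$. Concretely, $\langle x_i,x_\gamma\rangle x_i-\langle x_k,x_\gamma\rangle x_k$ expands, and after adding and subtracting $x_\gamma$ and using $\|x_i\|=\|x_k\|=1$, the leading linear part is $-\tfrac12(x_i-x_k)+\text{(quadratic)}$, matching the flocking leading part $x_j$ summed over $j$ which produces $x_c-\tfrac12(x_i-x_k)+\dots$; the $x_c$ pieces cancel in the difference. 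The off-diagonal zero blocks reflect that the structural ``difference'' variables $X^\ell$ do not feed back into the ``target-distance'' variables $X_\gamma^\ell$ at linear order, which follows from the same cancellations. I expect the main obstacle to be precisely this bookkeeping: correctly tracking which quadratic/quartic cross terms belong in $F$ versus which linear pieces survive after invoking $\|x_i\|=1$ and $\langle x_i,v_i\rangle=0$, and in particular verifying that the flocking and bonding contributions merge into the single coefficient $c_q+\sigma$ in the lower block while staying separate ($-\sigma/2$ and $-c_q$) in the upper block — a sign or index error there would corrupt the eigenvalue structure that Section \ref{sec5} relies on. Once every term is sorted, assembling $\dot X=MX+F$ with $X,F$ as in \eqref{F} is immediate.
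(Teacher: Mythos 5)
Your overall strategy --- differentiate each of the six functionals along \eqref{TR_eq}, use $\|x_i\|=1$, $\langle x_i,v_i\rangle=0$ and $\langle x_i,x_j\rangle=1-\tfrac12\|x_i-x_j\|^2$ to split off the linear part, and collect the remainder into $F$ --- is exactly the paper's, and your handling of the $(2,4)$ entry and of the lower-right block is essentially correct. However, two of your steps as described would fail.

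First, you treat $x_\gamma$ as a moving point, introduce $v_\gamma=\dot x_\gamma$, and plan to ``rewrite $v_\gamma$ in terms of $x_i,v_i$'' so that it ``combines with the centripetal term.'' In the structural frame $x_\gamma(t):=q_\gamma(0)$ is a \emph{constant} vector (see \eqref{eq 3.102}); making the target stationary is the entire point of the reference frame decomposition in Proposition \ref{prop3.2}. Hence $v_\gamma\equiv 0$, and $\dot X_\gamma^1=2X_\gamma^2$ together with $\tfrac{d}{dt}\langle x_i-x_\gamma,v_i\rangle=\|v_i\|^2+\langle x_i-x_\gamma,\dot v_i\rangle$ hold exactly, with nothing to rewrite. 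If $v_\gamma$ were genuinely nonzero, your first two rows would acquire extra terms present neither in $M$ nor in the stated $F$, so the proposition could not be recovered along the route you describe.

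Second, your derivation of the third row is self-contradictory: you assert that ``no $X^2$ or $X^3$ dependence leaks into the third row'' because the flocking contribution ``telescopes,'' yet you then write the row as $(0,-2c_q,-2c_p,0,\sigma,0)$, whose fifth entry is precisely a coefficient of $X^2$. The flocking contribution does not telescope away. Using $\langle v_i,x_i\rangle=0$ one has
\[
X^2=\frac{1}{N^2}\sum_{i,k}\langle v_i-v_k,x_i-x_k\rangle=-\frac{2}{N^2}\sum_{i,k}\langle v_i,x_k\rangle ,
\]
so the flocking term contributes $\tfrac{2\sigma}{N^2}\sum_{i,j}\langle v_i,x_j\rangle=-\sigma X^2$ to $\dot X_\gamma^3$; this identity is the source of the $(3,5)$ coupling, and carrying it out also lets you check the sign of that entry against \eqref{eq 5.3}. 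Relatedly, your closing remark that the zero off-diagonal blocks mean the difference variables $X^\ell$ do not feed back into the target-distance variables $X_\gamma^\ell$ is backwards: the upper-right block of $M$ is nonzero (entries $-\sigma/2$ and $\sigma$), so $X^1$ and $X^2$ do enter the equations for $X_\gamma^2$ and $X_\gamma^3$; it is the lower-left block that vanishes, because the differenced dynamics for $x_i-x_k$ produce only $X^1,X^2,X^3$ at linear order. With these two points corrected, the rest of your computation goes through as in the paper.
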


\begin{proof}
Clearly,
\begin{align*}
\frac{d}{dt}X_\gamma^1=2 X_\gamma^2.
\end{align*}
For $X_\gamma^2$, we have
\begin{align*}
\frac{d}{dt}X_\gamma^2&= X_\gamma^3+\frac{1}{N}\sum_{i=1}^N\langle x_i-x_\gamma,\dot v_i\rangle\\
&= X_\gamma^3+\frac{1}{N}\sum_{i=1}^N\bigg\langle x_i-x_\gamma,~     -\|v_i\|^2x_i + \sum_{j=1}^N\frac{\sigma}{N}(\|x_i\|^2 x_j - \langle x_i,x_j \rangle  x_i)\\
&\qquad\qquad\qquad\qquad\qquad\qquad+c_q(\|x_i\|^2 x_\gamma - \langle x_i,x_\gamma \rangle  x_i)-c_pv_i
+A_i\bigg\rangle
\\&=
 X_\gamma^3
-\frac{1}{N}\sum_{i=1}^N\frac{\|v_i\|^2 }{2}\| x_i-x_\gamma\|^2
+\frac{\sigma}{N^2}
\sum_{i,j=1}^N \langle x_i-x_\gamma,    x_j - \langle x_i,x_j \rangle  x_i\rangle
\\&\quad
+
\frac{c_q}{N}\sum_{i=1}^N \langle x_i-x_\gamma,   x_\gamma - \langle x_i,x_\gamma \rangle  x_i\rangle
-\frac{c_p}{N}\sum_{i=1}^N
 \langle x_i-x_\gamma,
 v_i
\rangle+\frac{1}{N}\sum_{i=1}^N\langle x_i-x_\gamma,A_i\rangle.
\end{align*}
Note that by $x_i\in \bbs^2$ and changing the indices,
\begin{align}\begin{aligned}\label{eq 5.1}
\sum_{i,j=1}^N \langle x_i-x_\gamma,    x_j - \langle x_i,x_j \rangle  x_i\rangle&=-\sum_{i,j=1}^N \langle x_\gamma,    x_j - \langle x_i,x_j \rangle  x_i\rangle
\\
&=-\sum_{i,j=1}^N \langle x_\gamma,    x_i - \langle x_i,x_j \rangle  x_i\rangle
\\
&=-\sum_{i,j=1}^N\frac{\|x_i-x_j\|^2}{2} \langle x_\gamma,      x_i\rangle
\\
&=-\frac{1}{2}\sum_{i,j=1}^N\| x_i-x_j\|^2 +\frac{1}{4}\sum_{i,j=1}^N\| x_i-x_j\|^2 \|x_i-x_\gamma\|^2.
\end{aligned}
\end{align}

By \eqref{eq 5.1},
we have
\begin{align*}
\frac{d}{dt}X_\gamma^2
&=
 X_\gamma^3
-\frac{1}{N}\sum_{i=1}^N\frac{\|v_i\|^2 }{2}\| x_i-x_\gamma\|^2-\frac{\sigma}{2}X^1+
\frac{\sigma}{4N^2}\sum_{i,j=1}^N\| x_i-x_j\|^2 \|x_i-x_\gamma\|^2
\\&\quad
-\frac{c_q}{N}\sum_{i=1}^N\|x_i-x_\gamma\|^2
+\frac{c_q}{4N}\sum_{i=1}^N\|x_i-x_\gamma\|^4-c_p
X_\gamma^2+\frac{1}{N}\sum_{i=1}^N\langle x_i-x_\gamma,A_i\rangle
\\
&=-c_qX_\gamma^1-c_p
X_\gamma^2
+
 X_\gamma^3-\frac{\sigma}{2}X^1+F_\gamma^2.\end{align*}
Similarly, we have
\begin{align*}
\frac{1}{2}\frac{d}{dt}X_\gamma^3&=\frac{1}{N}\sum_{i=1}^N\langle v_i,\dot v_i\rangle\\
&=\frac{1}{N}\sum_{i=1}^N \bigg\langle v_i,~     -\|v_i\|^2x_i + \sum_{j=1}^N\frac{\sigma}{N}(\|x_i\|^2 x_j - \langle x_i,x_j \rangle  x_i)\\
&\qquad \qquad \qquad \qquad +c_q(\|x_i\|^2 x_\gamma - \langle x_i,x_\gamma \rangle  x_i)-c_pv_i+A_i\bigg\rangle
\\&=
\frac{\sigma}{N^2}\sum_{i,j=1}^N \langle v_i,    x_j \rangle
-
\frac{1}{N}\sum_{i=1}^Nc_q \langle v_i,  x_i- x_\gamma\rangle
-\frac{1}{N}\sum_{i=1}^Nc_p
 \langle v_i,
 v_i
\rangle+\frac{1}{N}\sum_{i=1}^N\langle v_i,A_i\rangle
.\end{align*}
Thus, we have
\begin{align}\label{eq 5.3}
\frac{d}{dt}X_\gamma^3=-2c_qX^2_\gamma-2c_pX^3_\gamma-\sigma X^2+F_\gamma^3.
\end{align}

For $X^1$,
\[\frac{d}{dt}X^1=2X^2.\]
Similar to the previous cases, we use the second equation in \eqref{TR_eq} to obtain
\begin{align*}
\frac{d}{dt}X^2&=X^3+\frac{1}{N^2}\sum_{i,k=1}^N\langle \dot v_i-\dot v_k,x_i-x_k\rangle\\
&=X^3+\frac{1}{N^2}\sum_{i,k=1}^N\bigg\langle -\|v_i\|^2x_i+\|v_k\|^2x_k + \sum_{j=1}^N\frac{\sigma}{N}[ - \langle x_i,x_j \rangle  x_i+ \langle x_k,x_j \rangle  x_k]\\&\qquad\qquad\qquad\qquad\qquad+c_q\left[ - \langle x_i,x_\gamma \rangle  x_i+\langle x_k,~x_\gamma \rangle  x_k\right]
-c_pv_i+c_pv_k
,x_i-x_k\bigg\rangle\\
&\qquad +\frac{1}{N^2}\sum_{i,k=1}^N\langle A_i-A_k,x_i-x_k\rangle.
\end{align*}
By $x_i\in \bbs^2$, we have
\begin{align*}
\frac{d}{dt}X^2
&=X^3-\frac{1}{N^2}\sum_{i,k=1}^N\frac{\|v_i\|^2+\|v_k\|^2}{2}\| x_i-x_k\|^2
\\&\qquad-\sigma X^1
+\frac{\sigma}{4N^3}\sum_{i,j,k=1}^N\|x_i-x_j\|^2\|x_i-x_k\|^2
+\frac{\sigma}{4N^3}\sum_{i,j,k=1}^N\|x_k-x_j\|^2\|x_i-x_k\|^2
\\&\qquad-c_qX^1+\frac{c_q}{4N^2}\sum_{i,k=1}^N \|x_\gamma -  x_i\|^2\|x_i-x_k\|^2
+\frac{c_q}{4N^2}\sum_{i,k=1}^N   \|x_\gamma -  x_k\|^2\|x_i-x_k\|^2
\\&\qquad-c_p X^2+\frac{1}{N^2}\sum_{i,k=1}^N\langle A_i-A_k,x_i-x_k\rangle.
\end{align*}
Changing the indices implies that
\[\frac{d}{dt}X^2=-\sigma X^1-c_qX^1-c_p X^2+X^3+F^2.\]

Finally, for $X^3$, we obtain
\begin{align*}
\frac{1}{2}\frac{d}{dt}X^3&=\frac{1}{N^2}\sum_{i,k=1}^N\bigg\langle -\|v_i\|^2x_i+\|v_k\|^2x_k + \sum_{j=1}^N\frac{\sigma}{N}( - \langle x_i,x_j \rangle  x_i+ \langle x_k,x_j \rangle  x_k)\\&\qquad\qquad\qquad\qquad\qquad+c_q\left[ - \langle x_i,x_\gamma \rangle  x_i+\langle x_k,x_\gamma \rangle  x_k \right]
-c_pv_i+c_pv_k
,~v_i-v_k\bigg\rangle\\
&\qquad +\frac{1}{N^2}\sum_{i,k=1}^N\langle A_i-A_k,v_i-v_k\rangle\\
&=\frac{1}{N^2}\sum_{i,k=1}^N\left(\|v_i\|^2\langle x_i,v_k\rangle +\|v_k\|^2\langle x_x,v_i\rangle\right)
\\&\quad-\sigma X^2
+\sum_{i,j,k=1}^N\frac{\sigma}{2N^3}\|x_i-x_j\|^2\langle x_i,v_k\rangle
+\sum_{i,j,k=1}^N\frac{\sigma}{2N^3}\|x_k-x_j\|^2\langle x_k,v_i\rangle
\\&\quad-c_qX^2+\frac{c_q}{4N^2}\sum_{i,k=1}^N \|x_\gamma -  x_i\|^2\langle x_i,v_k\rangle
+\frac{c_q}{4N^2}\sum_{i,k=1}^N  \|x_\gamma -  x_k\|^2\langle x_k,v_i\rangle
-c_p X^3\\
&\quad +\frac{1}{N^2}\sum_{i,k=1}^N\langle A_i-A_k,v_i-v_k\rangle
.\end{align*}
Thus, we conclude that
\[\frac{d}{dt}X^3=-2\sigma X^2-2c_qX^2
-2c_p X^3+F^3.\]

\end{proof}

Note that the eigenvalues of the $6\times 6$ coefficient matrix $M$  have the only negative real part. The above result will be used for the complete rendezvous case.
\begin{remark}
In \cite{C-K-S2}, we use $l^\infty$-framework  to obtain a uniform decay estimate which is independent of $N$.
However, due to $X^2$ term on the right-hand side of \eqref{eq 5.3}, we cannot use this $l^\infty$-framework. We  obtain only the convergence result depending on $N$ by using the $6\times6$ system with $l^2$-framework.
\end{remark}

For the practical rendezvous result, we use a different framework, weighted $l^\infty$-framework. To obtain $l^\infty$-estimate, we define the following functionals:
\begin{align}\label{fnl}
X_i^1=\frac{4\|x_i-x_\gamma\|^2}{4-\|x_i-x_\gamma\|^2},
\quad
 X_i^2=\frac{16\langle x_i-x_\gamma,v_i\rangle}{\left(4-\|x_i-x_\gamma\|^2\right)^2},
  \quad
  X_i^3=\frac{16\langle v_i,v_i\rangle}{\left(4-\|x_i-x_\gamma\|^2\right)^2},
\end{align}
and
\begin{align}\begin{aligned}
\label{fnlf}
F_i^1&=0,\\
 F_i^2&=
-\frac{\|v_i\|^2 }{2}\frac{16\| x_i-x_\gamma\|^2}{\left(4-\|x_i-x_\gamma\|^2\right)^2}+\frac{16\sigma}{N\left(4-\|x_i-x_\gamma\|^2\right)^2}
\sum_{j=1}^N \langle x_i-x_\gamma,    x_j - \langle x_i,x_j \rangle  x_i\rangle
\\&\quad
+\frac{16\langle x_i-x_\gamma,A_i\rangle}{\left(4-\|x_i-x_\gamma\|^2\right)^2}
+\frac{64\langle x_i-x_\gamma, v_i\rangle^2}{\left(4-\|x_i-x_\gamma\|^2\right)^3}
   \\
  F_i^3&=\frac{32\sigma}{N\left(4-\|x_i-x_\gamma\|^2\right)^2}\sum_{j=1}^N \langle v_i,    x_j \rangle
+\frac{32\langle v_i,A_i\rangle}{\left(4-\|x_i-x_\gamma\|^2\right)^2}
+\frac{64\langle v_i, v_i\rangle\langle x_i-x_\gamma, v_i\rangle}{\left(4-\|x_i-x_\gamma\|^2\right)^3}.
\end{aligned}\end{align}
We note that due to the geometric structure of $\mathbb{S}^2$,  the quartic terms with the coefficient $c_q$  in $F_\gamma^2$ and $F^2$ appear. Thus, the standard functional $X(t)$ in the previous argument and Section \ref{sec2} does not work for this practical rendezvous case. For the complete rendezvous case, we will use the energy functional method and Lasalle’s invariance principle to control the quartic terms. However, for the practical rendezvous case, we cannot use the same methodology since the system is not autonomous. Thus, if an extra term with the coefficient $c_q$ appears in $F$, then it is hard to obtain the desired result.  Alternatively,  using the functionals in \eqref{fnl}, we can remove the quartic term with the coefficient $c_q$ as in \eqref{fnlf}.

By the same argument in Proposition \ref{eq 5.1}, we have
\begin{align*}
\frac{d}{dt}X_i^1=2 X_i^2.
\end{align*}
Using the second equation for the structural system, we obtain the following for $X_\gamma^2$.
\begin{align*}
\frac{d}{dt}X_i^2&= X_i^3+\frac{16\langle x_i-x_\gamma,\dot v_i\rangle}{\left(4-\|x_i-x_\gamma\|^2\right)^2}+\frac{64\langle x_i-x_\gamma, v_i\rangle^2}{\left(4-\|x_i-x_\gamma\|^2\right)^3}\\
&= X_i^3+16\bigg\langle x_i-x_\gamma,~     -\|v_i\|^2x_i + \sum_{j=1}^N\frac{\sigma}{N}(\|x_i\|^2 x_j - \langle x_i,x_j \rangle  x_i)\\
&\qquad\qquad\qquad\qquad\qquad\qquad+c_q(\|x_i\|^2 x_\gamma - \langle x_i,x_\gamma \rangle  x_i)-c_pv_i
+A_i\bigg\rangle/\left(4-\|x_i-x_\gamma\|^2\right)^2
\\&\quad+\frac{64\langle x_i-x_\gamma, v_i\rangle^2}{\left(4-\|x_i-x_\gamma\|^2\right)^3}\\
&=
 X_i^3
-\frac{\|v_i\|^2 }{2}\frac{16\| x_i-x_\gamma\|^2}{\left(4-\|x_i-x_\gamma\|^2\right)^2}
+\frac{16\sigma}{N\left(4-\|x_i-x_\gamma\|^2\right)^2}
\sum_{j=1}^N \langle x_i-x_\gamma,    x_j - \langle x_i,x_j \rangle  x_i\rangle
\\&\quad
+
\frac{16c_q}{\left(4-\|x_i-x_\gamma\|^2\right)^2}\langle x_i-x_\gamma,   x_\gamma - \langle x_i,x_\gamma \rangle  x_i\rangle
-\frac{16c_p}{\left(4-\|x_i-x_\gamma\|^2\right)^2}
 \langle x_i-x_\gamma,
 v_i
\rangle
\\&\quad+\frac{16\langle x_i-x_\gamma,A_i\rangle}{\left(4-\|x_i-x_\gamma\|^2\right)^2}+\frac{64\langle x_i-x_\gamma, v_i\rangle^2}{\left(4-\|x_i-x_\gamma\|^2\right)^3}.
\end{align*}
Note that
\begin{align*}
  \langle x_i-x_\gamma,   x_\gamma - \langle x_i,x_\gamma \rangle  x_i\rangle&= \langle x_i-x_\gamma,   x_\gamma \rangle- \langle x_i,x_\gamma \rangle \langle x_i-x_\gamma,   x_i\rangle\\
&= -\|x_i-x_\gamma\|^2- \langle x_i-x_\gamma,x_\gamma \rangle \langle x_i-x_\gamma,   x_i\rangle\\
&= -\|x_i-x_\gamma\|^2+\frac{\|x_i-x_\gamma\|^4}{4}.
\end{align*}
This implies that
\begin{align*}\begin{aligned}
\frac{d}{dt}X_i^2
&=
 X_i^3
-\frac{\|v_i\|^2 }{2}\frac{16\| x_i-x_\gamma\|^2}{\left(4-\|x_i-x_\gamma\|^2\right)^2}+\frac{16\sigma}{N\left(4-\|x_i-x_\gamma\|^2\right)^2}
\sum_{j=1}^N \langle x_i-x_\gamma,    x_j - \langle x_i,x_j \rangle  x_i\rangle
\\&\quad
-c_qX_i^1
-c_p
X_i^2+\frac{16\langle x_i-x_\gamma,A_i\rangle}{\left(4-\|x_i-x_\gamma\|^2\right)^2}
+\frac{64\langle x_i-x_\gamma, v_i\rangle^2}{\left(4-\|x_i-x_\gamma\|^2\right)^3}
.\end{aligned}\end{align*}

For $X_i^3$, we have
\begin{align*}
\frac{d}{dt}X_\gamma^3&=\frac{32\langle v_i,\dot v_i\rangle}{\left(4-\|x_i-x_\gamma\|^2\right)^2}+\frac{64\langle v_i, v_i\rangle\langle x_i-x_\gamma, v_i\rangle}{\left(4-\|x_i-x_\gamma\|^2\right)^3}\\
&= 32\bigg\langle v_i,~     -\|v_i\|^2x_i + \sum_{j=1}^N\frac{\sigma}{N}(\|x_i\|^2 x_j - \langle x_i,x_j \rangle  x_i)\\
& \qquad \qquad +c_q(\|x_i\|^2 x_\gamma - \langle x_i,x_\gamma \rangle  x_i)-c_pv_i+A_i\bigg\rangle
/\left(4-\|x_i-x_\gamma\|^2\right)^2
+\frac{64\langle v_i, v_i\rangle\langle x_i-x_\gamma, v_i\rangle}{\left(4-\|x_i-x_\gamma\|^2\right)^3}
\\&=
\frac{32\sigma}{N\left(4-\|x_i-x_\gamma\|^2\right)^2}\sum_{j=1}^N \langle v_i,    x_j \rangle
-32c_q\frac{\langle v_i,  x_i- x_\gamma\rangle}{{\left(4-\|x_i-x_\gamma\|^2\right)^2}}
-32c_p
\frac{ \langle v_i,
 v_i
\rangle}{\left(4-\|x_i-x_\gamma\|^2\right)^2}\\
&\quad+\frac{32\langle v_i,A_i\rangle}{\left(4-\|x_i-x_\gamma\|^2\right)^2}
+\frac{64\langle v_i, v_i\rangle\langle x_i-x_\gamma, v_i\rangle}{\left(4-\|x_i-x_\gamma\|^2\right)^3}
\\&=
\frac{32\sigma}{N\left(4-\|x_i-x_\gamma\|^2\right)^2}\sum_{j=1}^N \langle v_i,    x_j \rangle
-2c_qX_i^2-2c_pX_i^3+\frac{32\langle v_i,A_i\rangle}{\left(4-\|x_i-x_\gamma\|^2\right)^2}
+\frac{64\langle v_i, v_i\rangle\langle x_i-x_\gamma, v_i\rangle}{\left(4-\|x_i-x_\gamma\|^2\right)^3}
.\end{align*}
In conclusion, we have
\begin{align*}
\frac{d}{dt}X_i^1&=2 X_i^2+F_i^1,\\
\frac{d}{dt}X_i^2
&=
 -c_qX^1_i-c_p
X_i^2+X_i^3+F_i^2
\\
\frac{d}{dt}X_i^3&=-2c_qX^2_i-2c_pX^3_i+F_i^3.
\end{align*}
Therefore, we have proved the following proposition.
\begin{proposition}\label{prop 4.3}
Let
\[X_i=(X_i^1,X_i^2,X_i^3)^T,\quad F_i=(F_i^1,F_i^2,F_i^3)^T,\]
where $X_i^k$, $F_i^k$, $k=1,2,3$ are functionals defined in \eqref{fnl} and \eqref{fnlf}.

Then the following holds.
\[\dot X_i=M_\infty X_i+F_i,\]
where the coefficient matrix $M_\infty$ is given by
\begin{align*}
M_\infty=
\begin{bmatrix}
0 &2 &0 \\
 -c_q & -c_p  &1\\
0 &-2c_q & -2c_p\end{bmatrix}.
\end{align*}
\end{proposition}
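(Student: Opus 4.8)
The statement is a bookkeeping identity, so the plan is to differentiate each of the three scalar functionals $X_i^1,X_i^2,X_i^3$ in \eqref{fnl} directly and match the outcome against the rows of $M_\infty$ and the components of $F_i$ in \eqref{fnlf}. Throughout I would use that $\|x_i(t)\|=1$ and $\langle x_i(t),v_i(t)\rangle=0$ for all $t\ge 0$ (which hold by the argument recalled in \eqref{perp}), together with the fact that $x_\gamma$ is \emph{time-independent}, so that, abbreviating $r_i:=\|x_i-x_\gamma\|^2$, one has $\dot r_i=2\langle v_i,x_i-x_\gamma\rangle$. From $\|x_i\|=\|x_\gamma\|=1$ one also records the elementary identities $\langle x_i,x_\gamma\rangle=1-\tfrac{r_i}{2}$, $\langle x_i-x_\gamma,x_i\rangle=\tfrac{r_i}{2}$ and $\langle v_i,x_\gamma\rangle=-\langle v_i,x_i-x_\gamma\rangle$, which enter every step.

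The first row is immediate: differentiating $X_i^1=4r_i/(4-r_i)$ gives $\dot X_i^1=16\dot r_i/(4-r_i)^2=2X_i^2$, and $F_i^1=0$. For the other two rows I would apply the quotient/product rule to $X_i^2$ and $X_i^3$ and then substitute the structural equation \eqref{TR_eq} for $\dot v_i$ (with $\|x_i\|=1$) into the inner products $\langle x_i-x_\gamma,\dot v_i\rangle$ and $\langle v_i,\dot v_i\rangle$, expanding term by term. Differentiating the denominators $(4-r_i)^{\pm 2}$ produces exactly the cubic remainders $64\langle x_i-x_\gamma,v_i\rangle^2/(4-r_i)^3$ and $64\|v_i\|^2\langle x_i-x_\gamma,v_i\rangle/(4-r_i)^3$ collected in $F_i^2$ and $F_i^3$. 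In $\langle v_i,\dot v_i\rangle$ the constraint $\langle v_i,x_i\rangle=0$ annihilates the centripetal term and the $x_i$-components of the $\sigma$- and $c_q$-terms, leaving the $\sigma$-sum $\tfrac{\sigma}{N}\sum_j\langle v_i,x_j\rangle$, the $A_i$-term, and $-c_q\langle v_i,x_i-x_\gamma\rangle-c_p\|v_i\|^2$; multiplying by $32/(4-r_i)^2$ turns the last two into $-2c_qX_i^2-2c_pX_i^3$ and the rest into the remaining part of $F_i^3$. Likewise the $-c_pv_i$ term in $\langle x_i-x_\gamma,\dot v_i\rangle$ gives $-c_pX_i^2$, and the $\|v_i\|^2$ produced by differentiating $\langle x_i-x_\gamma,v_i\rangle$ gives $X_i^3$.

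The one step that is not purely mechanical — and the reason the weights in \eqref{fnl} carry these precise powers of $4-\|x_i-x_\gamma\|^2$ — is the $c_q$ contribution to $\dot X_i^2$. Using $\langle x_i-x_\gamma,\,x_\gamma-\langle x_i,x_\gamma\rangle x_i\rangle=-r_i+\tfrac{r_i^2}{4}$ together with the factorization $16\bigl(-r_i+\tfrac{r_i^2}{4}\bigr)=-4r_i(4-r_i)$, one gets
\[
\frac{16\,c_q\bigl(-r_i+\tfrac{r_i^2}{4}\bigr)}{(4-r_i)^2}=-\frac{4c_q r_i}{4-r_i}=-c_qX_i^1,
\]
so the quartic term in $\|x_i-x_\gamma\|$ that is unavoidable with the $l^2$ functional of Proposition \ref{prop 4.1} is here absorbed \emph{exactly} into the linear term $-c_qX_i^1$, leaving no $c_q$- or $c_p$-dependent higher-order residue in $F_i^2$. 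Assembling the three identities yields $\dot X_i=M_\infty X_i+F_i$ with $M_\infty$ as stated. There is thus no analytic obstacle; the only real work is the algebraic verification, which must be carried out carefully so that denominators and the cubic remainder terms line up, and the point worth highlighting is precisely the cancellation above, which is what makes $F_i$ free of $c_q,c_p$ in its nonlinear part and hence usable for the practical rendezvous estimate.
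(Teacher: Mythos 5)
Your proposal is correct and follows essentially the same route as the paper: direct differentiation of the weighted functionals, substitution of the structural equation \eqref{TR_eq} using $\|x_i\|=1$ and $\langle x_i,v_i\rangle=0$, with the cubic remainders from the denominators collected into $F_i$ and the identity $\langle x_i-x_\gamma,\,x_\gamma-\langle x_i,x_\gamma\rangle x_i\rangle=-\|x_i-x_\gamma\|^2+\tfrac{1}{4}\|x_i-x_\gamma\|^4$ converting the $c_q$ contribution exactly into $-c_qX_i^1$. The cancellation you highlight is precisely the point the paper makes about why these weighted functionals are needed for the practical rendezvous estimate.
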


\section{Asymptotic analysis on the target tracking models: complete and practical rendezvouses }\label{sec5}
\setcounter{equation}{0}

In this section, we provide the proofs of Theorems \ref{thm1} and \ref{thm2} in Section \ref{sec1}.
Let $(q_\gamma,p_\gamma)$ be the phase of the target. We assume that the target satisfies \eqref{target} for some continuous $u_\gamma(t)\in \bbr^3$. For the given target $(q_\gamma(t),p_\gamma(t))$,  let  $\{(q_i(t),p_i(t))\}_{i=1}^N $ be the solution  to \eqref{main1}. By the argument in Section \ref{sec4}, we have the following equivalent system for $x_i(t)=S_\gamma^{-1}(t)p_i(t)$.
\begin{align}
\begin{aligned}\label{ST_eq}
\dot{x}_i&=v_i,\\
\dot{v}_i&=-\frac{\|v_i\|^2}{\|x_i\|^2}x_i + \sum_{j=1}^N\frac{\sigma_{ij}}{N}(\|x_i\|^2 x_j - \langle x_i,x_j \rangle  x_i)
+c_q(\|x_i\|^2 x_\gamma - \langle x_i,x_\gamma \rangle  x_i)
-c_pv_i+A_i,
\end{aligned}
\end{align}
where  $S_\gamma^t$ is the solution operator defined  by \eqref{eq W}-\eqref{eq W3}. For the angular velocity $w_\gamma=q_\gamma\times p_\gamma$, $A_i$ is the extra control law given by
\[A_i=S_\gamma^{-1}(t)U_i-2\langle w_\gamma,q_i\rangle S_\gamma^{-1}(t)[  q_i\times  p_i ]-S_\gamma^{-1}(t)[\dot w_\gamma(t)\times q_i]
.\]

\subsection{Complete rendezvouses} We assume that $\sigma_{ij}=\sigma>0$ and  $A_i=0$, i.e.,
\[U_i=2\langle w_\gamma,q_i\rangle   q_i\times  p_i -\dot w_\gamma(t)\times q_i.\]
We first use an energy functional method to obtain the convergence result in Theorem \ref{thm1} without convergence rate.  We now define an energy functional $\E=\E(\{(x_i,v_i)\}_{i=1}^N)$ as follows.
\[\E=\E_k+\E_c,\]
where $\E_k$ is the kinetic energy given by
\[\E_k=\frac{1}{2N}\sum_{i=1}^N\|v_i\|^2,\]
and $\E_c$ is the configuration energy given by
\[\E_c=\frac{\sigma}{4N^2}\sum_{i,j=1}^N   \|x_i-x_j\|^2+\frac{c_q}{2N}\sum_{i=1}^N\|x_\gamma-x_i\|^2.\]

This energy functional has a dissipation property. To obtain this, we take the inner product between $v_i$ and $\dot v_i$ to obtain
\begin{align*}
\frac{1}{2}\frac{d}{dt}\|v_i\|^2&=-\frac{\|v_i\|^2}{\|x_i\|^2}\langle x_i,v_i\rangle  + \sum_{j=1}^N\frac{\sigma}{N}(\|x_i\|^2\langle x_j,v_i\rangle  - \langle x_i,x_j \rangle \langle x_i,v_i\rangle )\\
&\quad\quad +c_q(\|x_i\|^2 \langle x_\gamma,v_i\rangle  - \langle x_i,x_\gamma \rangle  \langle x_i,v_i\rangle )
-c_p\langle v_i,v_i\rangle.\end{align*}
Using the orthogonality $\langle x_i,v_i\rangle=0$ and $\|x_i\|=1$ in \eqref{perp}, we have
\begin{align*}
\frac{1}{2}\frac{d}{dt}\|v_i\|^2&= \sum_{j=1}^N\frac{\sigma}{N} \langle x_j,v_i\rangle +c_q  \langle x_\gamma,v_i\rangle  -c_p\|v_i\|^2.
\end{align*}
Therefore,
\[\frac{d}{dt}\E_k= \sum_{i,j=1}^N\frac{\sigma}{N^2} \langle x_j,v_i\rangle +\frac{c_q}{N} \sum_{i=1}^N \langle x_\gamma,v_i\rangle -\frac{c_p}{N}\sum_{i=1}^N\|v_i\|^2.\]
Similarly,
\begin{align*}
\frac{d}{dt}\E_c&=\frac{\sigma}{2N^2}\sum_{i,j=1}^N  \langle  x_i-x_j, v_i-v_j\rangle
-\frac{c_q}{N}\sum_{i=1}^N\langle x_\gamma,v_i\rangle
 \\& =-\frac{\sigma}{2N^2}\sum_{i,j=1}^N  (\langle  x_i,v_j\rangle+\langle  x_j,v_i\rangle)
-\frac{c_q}{N}\sum_{i=1}^N\langle x_\gamma,v_i\rangle \\&
=
-\frac{\sigma}{N^2}\sum_{i,j=1}^N  \langle  x_j,v_i\rangle
-\frac{c_q}{N}\sum_{i=1}^N\langle x_\gamma,v_i\rangle.
\end{align*}
Therefore, we have
\[\frac{d}{dt}(\E_k+\E_c)=-\frac{c_q}{N}\sum_{i=1}^N\| v_i\|^2=-2c_q\E_k.\]

We notice that \eqref{ST_eq} is autonomous, since $x_\gamma$ is a constant vector. Moreover,  the energy functional $\E$ is zero if and only if
\[v_i=0~\mbox{for all}~i\in \{1,\ldots,N\}. \]
We can easily prove that the union of  the following two sets is the maximum invariant set of $\E$.
\[\left\{\{(x_i,v_i)\}_{i=1}^N: v_i=0, \quad x_i=x_\gamma ~\mbox{for all}~i\in \{1,\ldots,N\}\right\}\]
and
\[\bigg
\{\{(x_i,v_i)\}_{i=1}^N:v_i=0,\quad  \frac{\sigma}{N}\sum_{j=1}^N x_j+c_q x_\gamma=0  ~\mbox{for all}~i\in \{1,\ldots,N\}\bigg\}
.\]
  If we assume that $c_q>\sigma$ or $\displaystyle\E(0)<\frac{\sigma}{2}\left(1+\frac{c_q}{\sigma}\right)^2$, then $\displaystyle \frac{\sigma}{N}\sum_{j=1}^N x_j+c_q x_\gamma\ne 0$.  Thus,   by Lasalle’s invariance principle,
  \[\|v_i(t)\|\to 0 \quad \mbox{and}\quad  x_i(t)\to x_\gamma\]
  as $t\to \infty$.
Therefore, we have proved  the following proposition.
\begin{proposition}\label{prop 6.1}
 If $c_q>\sigma$ or $\displaystyle\E(0)<\frac{\sigma}{2}\left(1+\frac{c_q}{\sigma}\right)^2$, then
  \[v_i(t)\to 0\]
  and \[x_i(t)\to x_\gamma(t)\]
  as $t\to \infty$ for any initial data satisfying  $x_i(0)\ne-x_\gamma(0)$ for all $i\in \{1,\ldots,N\}$.
\end{proposition}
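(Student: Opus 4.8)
The plan is to run LaSalle's invariance principle for the autonomous structural system \eqref{ST_eq} with $A_i=0$ and $\sigma_{ij}=\sigma$, using $\E=\E_k+\E_c$ as a Lyapunov function via the dissipation identity $\frac{d}{dt}\E=-2c_q\E_k\le 0$ established above. First I would verify that orbits are precompact: \eqref{perp} keeps each $x_i(t)$ on $\mathbb{S}^2$, while $\E_k(t)\le\E(t)\le\E(0)$ gives $\|v_i(t)\|^2\le 2N\E(0)$, and since $x_\gamma=q_\gamma(0)$ is a fixed vector the system really is autonomous. LaSalle then forces the $\omega$-limit set of every orbit into the largest invariant subset of $\{\frac{d}{dt}\E=0\}=\{v_i=0\text{ for all }i\}$; invariance within $\{v\equiv 0\}$ requires $\dot v_i\equiv 0$, so the $\omega$-limit set consists of equilibria of \eqref{ST_eq}.

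Next I would identify those equilibria. With $v_i=0$ and $\|x_i\|=1$, the relation $\dot v_i=0$ collapses to $\bar x-\langle x_i,\bar x\rangle x_i=0$, where $\bar x:=\frac{\sigma}{N}\sum_j x_j+c_q x_\gamma$; that is, $\bar x$ is a scalar multiple of $x_i$ for every $i$. Hence either $\bar x=0$, or $\bar x\ne 0$ and all $x_i$ are parallel to $\bar x$, which forces $x_\gamma\parallel\bar x$ as well (since $c_q>0$) and therefore $x_i=\pm x_\gamma$ for each $i$. The two hypotheses are exactly what kill the case $\bar x=0$: if $c_q>\sigma$ then $\bar x=0$ is impossible, because it would give $\big\|\frac{\sigma}{N}\sum_j x_j\big\|=c_q$ while $\big\|\frac{\sigma}{N}\sum_j x_j\big\|\le\sigma<c_q$; and if $\E(0)<\frac{\sigma}{2}\big(1+\frac{c_q}{\sigma}\big)^2$ I would instead evaluate $\E$ on $\{v=0,\ \bar x=0\}$, where $\sum_j x_j=-\frac{Nc_q}{\sigma}x_\gamma$, and substituting $\|x_i-x_j\|^2=2-2\langle x_i,x_j\rangle$ and $\|x_\gamma-x_i\|^2=2-2\langle x_\gamma,x_i\rangle$ into $\E_c$ gives, after simplification, $\E=\E_c=\frac{(\sigma+c_q)^2}{2\sigma}=\frac{\sigma}{2}\big(1+\frac{c_q}{\sigma}\big)^2$. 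Since $\E$ is continuous, nonincreasing, and constant on the $\omega$-limit set (equal to $\lim_{t\to\infty}\E(t)\le\E(0)$), the strict inequality then excludes every point of $\{v=0,\ \bar x=0\}$ from the $\omega$-limit set. In either case the $\omega$-limit set meets only configurations with $x_i=\pm x_\gamma$, the target configuration $\{v_i=0,\ x_i=x_\gamma\ \forall i\}$ being the one of zero energy.

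The main obstacle is not the dissipation estimate (already in hand) but the final sign determination, i.e. upgrading "$x_i=\pm x_\gamma$ on the $\omega$-limit set" to "$x_i(t)\to x_\gamma$": configurations in which some $x_i=-x_\gamma$ are themselves equilibria, and LaSalle alone does not exclude them. This is where the hypothesis $x_i(0)\ne -x_\gamma$ must enter — at the very least it discards orbits frozen at an antipodal equilibrium, and more substantively one argues, using the connectedness of the $\omega$-limit set together with the energy level just computed, that an orbit starting away from the antipode cannot have such a configuration in its limit set, so that the $\omega$-limit set reduces to the single point $\{v_i=0,\ x_i=x_\gamma\}$ and hence $v_i(t)\to 0$, $x_i(t)\to x_\gamma$. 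The quantitative identity $\E|_{\{v=0,\ \bar x=0\}}=\frac{\sigma}{2}\big(1+\frac{c_q}{\sigma}\big)^2$, which matches the threshold in the hypothesis exactly, is the one computation worth isolating; everything else is the LaSalle machinery plus the dissipation estimate.
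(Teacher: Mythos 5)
Your proposal follows the paper's proof: the same energy functional $\E=\E_k+\E_c$, the same dissipation identity, and LaSalle's invariance principle applied to the autonomous structural system \eqref{ST_eq}. Two of your additions are genuine improvements in rigor over the printed argument. First, you classify \emph{all} equilibria correctly: with $v_i=0$, the condition $\dot v_i=0$ reads $\bar x-\langle x_i,\bar x\rangle x_i=0$ with $\bar x=\frac{\sigma}{N}\sum_j x_j+c_q x_\gamma$, so either $\bar x=0$ or every $x_i\in\{x_\gamma,-x_\gamma\}$; the paper lists only $\{x_i=x_\gamma\ \forall i\}$ and $\{\bar x=0\}$ and silently drops the mixed antipodal configurations. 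Second, your computation that $\E\equiv\frac{\sigma}{2}\left(1+\frac{c_q}{\sigma}\right)^2$ on $\{v=0,\ \bar x=0\}$ is correct and supplies exactly the justification the paper omits for why the energy hypothesis (or $c_q>\sigma$) removes that set from the $\omega$-limit set.

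The genuine gap is the one you flag yourself at the end, and the repair you sketch does not close it. A configuration with $k\ge1$ agents at $-x_\gamma$ and $N-k$ at $x_\gamma$ is a single equilibrium point, hence already a connected invariant set, so connectedness of the $\omega$-limit set gives nothing; and its energy is $\frac{2\sigma k(N-k)}{N^2}+\frac{2c_qk}{N}$, which never exceeds $\frac{\sigma}{2}\left(1+\frac{c_q}{\sigma}\right)^2$ (for $N=k=1$ it equals $2c_q\le\frac{(\sigma+c_q)^2}{2\sigma}$), so neither hypothesis excludes it from the $\omega$-limit set. Worse, the hypothesis $x_i(0)\ne-x_\gamma$ cannot do the job alone: already for $N=1$ the structural system reduces to a damped spherical pendulum $\dot v=-\|v\|^2x+c_q(x_\gamma-\langle x,x_\gamma\rangle x)-c_pv$, whose antipodal equilibrium is a hyperbolic saddle (unstable eigenvalue $\tfrac{1}{2}(-c_p+\sqrt{c_p^2+4c_q})>0$) with a nontrivial stable manifold consisting of initial data satisfying $x(0)\ne-x_\gamma$ that nevertheless converge to $-x_\gamma$. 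Excluding such orbits requires a genuinely dynamical argument (restricting to the complement of the stable manifolds, or an ``almost every initial datum'' formulation), which you do not supply --- but, to be fair, neither does the paper, whose proof has precisely the same lacuna.
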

Next we consider exponential decay estimates for $\|x_i-x_\gamma\|$ and $\|v_i\|$. For notational simplicity, we define the following two functionals.
\[\mathcal{D}_x(t)=\max_{1\leq i\leq N}\|x_i(t)-x_\gamma(t)\|^2\]
and
\[\mathcal{D}_v(t)=\max_{1\leq i\leq N}\|v_i(t)\|^2.\]
\begin{proposition}\label{prop 6.2}
Assume that $A_i=0$. Then for the functional $F$ defined in \eqref{F},  the following estimate holds
\[\|F\|\leq 8(\sigma+c_q)[\mathcal{D}_x(t)+\mathcal{D}_v(t)]X^1_\gamma.\]
\end{proposition}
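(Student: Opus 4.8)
The plan is to estimate the six components of $F$ one at a time. Because we assume $A_i=0$ for every $i$, the components $F_\gamma^1$, $F_\gamma^3$ and $F^1$ vanish identically (the last two since every $A$–dependent summand in \eqref{F} drops out and the remaining defining expressions are $0$), so $\|F\|$ is controlled, up to the constant from the equivalence of norms on $\bbr^6$, by $|F_\gamma^2|+|F^2|+|F^3|$. Throughout I would use the conserved constraints from \eqref{perp}, namely $\|x_i\|=1$ and $\langle x_i,v_i\rangle=0$, together with three elementary facts: $\|x_i-x_\gamma\|^2\le\mathcal{D}_x$, $\|v_i\|^2\le\mathcal{D}_v$, and the chord inequality on the sphere $\|x_i-x_j\|^2\le 2\|x_i-x_\gamma\|^2+2\|x_j-x_\gamma\|^2\le 4\mathcal{D}_x$. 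The common mechanism is that, after using $\langle x_i,v_i\rangle=0$, every summand of $F_\gamma^2$, $F^2$ and $F^3$ is a product containing at least one factor $\|x_\ell-x_\gamma\|^2$ (or $\|x_\ell-x_\gamma\|$); bounding all remaining factors by $\mathcal{D}_x$, $\mathcal{D}_v$ or by absolute constants and then averaging the surviving $\|x_\ell-x_\gamma\|^2$ over $\ell$ produces exactly $X_\gamma^1$, the index not attached to that factor contributing a combinatorial factor that cancels against the normalisation $1/N$, $1/N^2$ or $1/N^3$.

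For $F_\gamma^2$ this is immediate: each of its three terms already displays the factor $\|x_i-x_\gamma\|^2$, so estimating $\|v_i\|^2\le\mathcal{D}_v$ in the first, $\|x_i-x_j\|^2\le 4\mathcal{D}_x$ (then summing over $j$) in the second, and $\|x_i-x_\gamma\|^4\le\mathcal{D}_x\|x_i-x_\gamma\|^2$ in the third, and averaging over $i$, gives $|F_\gamma^2|\le(\tfrac12\mathcal{D}_v+\sigma\mathcal{D}_x+\tfrac{c_q}{4}\mathcal{D}_x)X_\gamma^1$. For $F^2$ one first rewrites each $\|x_i-x_k\|^2$ as $2\|x_i-x_\gamma\|^2+2\|x_k-x_\gamma\|^2$, keeps one such summand to be averaged, bounds any second squared difference by $4\mathcal{D}_x$, and re–indexes the double and triple sums; this yields $|F^2|\le C(\sigma+c_q)(\mathcal{D}_x+\mathcal{D}_v)X_\gamma^1$ with an explicit small $C$. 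Collecting these two contributions already accounts for all the terms carrying the coefficients $\sigma$ and $c_q$ in the ``position'' functionals.

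The term $F^3$ is the crux and the expected main obstacle. Its summands contain the mixed inner products $\langle x_i,v_k\rangle$, which are \emph{not} of the difference form exploited above; the key identity is $\langle x_i,v_k\rangle=\langle x_i-x_k,v_k\rangle$, valid because $\langle x_k,v_k\rangle=0$, whence $|\langle x_i,v_k\rangle|\le(\|x_i-x_\gamma\|+\|x_k-x_\gamma\|)\,\|v_k\|$. Inserting this into the three terms of $F^3$ (with $\|v_i\|^2\le\mathcal{D}_v$, $\|x_i-x_j\|^2\le 4\mathcal{D}_x$, $\|x_\gamma-x_i\|^2\le\mathcal{D}_x$ respectively) and applying Cauchy–Schwarz in the form $\frac1N\sum_i\|x_i-x_\gamma\|\,\|v_i\|\le\sqrt{X_\gamma^1}\sqrt{X_\gamma^3}$ with $X_\gamma^3\le\mathcal{D}_v$ extracts an $X_\gamma^1$–factor; the unbalanced powers of $\mathcal{D}_v$ that then appear are re–assembled into $\mathcal{D}_x+\mathcal{D}_v$ using $\mathcal{D}_x,\mathcal{D}_v\le 4$ (and, if needed, the a priori bound on $\mathcal{D}_v$ coming from the dissipation $\frac{d}{dt}\mathcal{E}=-2c_q\mathcal{E}_k\le 0$ proved earlier). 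Adding Steps 1–3 and discarding the larger numerical constants gives $\|F\|\le 8(\sigma+c_q)(\mathcal{D}_x+\mathcal{D}_v)X_\gamma^1$. The difficulty is exactly this last step: unlike in $F_\gamma^2$ and $F^2$, where the position smallness is manifest, in $F^3$ it is hidden inside $\langle x_i,v_k\rangle$ and must be exposed via orthogonality, after which the non–matching powers of $\mathcal{D}_x$ and $\mathcal{D}_v$ and the bookkeeping of which index survives the averaging in the triple sums are where constant and sign errors most easily creep in, so that part I would carry out in full.
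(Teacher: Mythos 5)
Your overall strategy is the same as the paper's: kill the $A_i$-terms, note $F_\gamma^1=F_\gamma^3=F^1=0$, bound $F_\gamma^2$ by pulling out the manifest factor $\|x_i-x_\gamma\|^2$, control $\|x_i-x_k\|^2$ by $2\|x_i-x_\gamma\|^2+2\|x_k-x_\gamma\|^2\le 4\mathcal{D}_x$, and expose the hidden smallness in $\langle x_i,v_k\rangle$ via the orthogonality identity $\langle x_i,v_k\rangle=\langle x_i-x_k,v_k\rangle$ (this is exactly the paper's inequality \eqref{eq 5.5}). Your bounds for $F_\gamma^2$ and $F^2$ match the paper's up to harmless constants.

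The one substantive deviation, and the place where your sketch does not close, is the first term of $F^3$, namely $\frac{2}{N^2}\sum_{i,k}\left(\|v_i\|^2\langle x_i,v_k\rangle+\|v_k\|^2\langle x_k,v_i\rangle\right)$. Your proposed Cauchy--Schwarz step $\frac1N\sum_i\|x_i-x_\gamma\|\|v_i\|\le\sqrt{X_\gamma^1}\sqrt{X_\gamma^3}$ produces only a factor $\sqrt{X_\gamma^1}$, and no amount of bounding $X_\gamma^3\le\mathcal{D}_v$ or invoking $\mathcal{D}_x,\mathcal{D}_v\le 4$ upgrades $\sqrt{X_\gamma^1}$ to the full factor $X_\gamma^1$ demanded by the statement; one would need an extra relation of the type $X_\gamma^3\lesssim X_\gamma^1$, which is not available. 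You should be aware that the paper's own proof has the same blemish at the same spot: it bounds $|\langle x_i,v_k\rangle|\le\mathcal{D}_x+\mathcal{D}_v$ pointwise and arrives at $|F^3|\le(8\sigma+c_q)(\mathcal{D}_x+\mathcal{D}_v)X_\gamma^1+4(\mathcal{D}_x+\mathcal{D}_v)X_\gamma^3$, a residual term proportional to $X_\gamma^3$ rather than $X_\gamma^1$, and then simply asserts the conclusion. What is actually used downstream (in the Gronwall argument following Proposition \ref{prop 4.1}) is only $\|F\|\lesssim(\sigma+c_q)(\mathcal{D}_x+\mathcal{D}_v)\|X\|$, for which a residual $X_\gamma^3$ term is harmless since $X_\gamma^3$ is a component of $X$. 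So your proposal is faithful to the paper's argument, but the step you yourself flag as the crux is not resolved by the Cauchy--Schwarz device you propose; the honest fix is to state the bound with $\|X\|$ (or with $X_\gamma^1+X_\gamma^3$) in place of $X_\gamma^1$.
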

\begin{proof}
By elementary calculation, we have
\begin{align*}
| F_\gamma^1|&=0,\\
| F_\gamma^2|&\leq \left(\frac{\mathcal{D}_v(t)}{2}
+\frac{\sigma\mathcal{D}_x(t)}{4}+\frac{c_q\mathcal{D}_x(t)}{4}
\right)X^1_\gamma,\\
 | F_\gamma^3|&=0,
\end{align*}
and
\begin{align*}
F^1&= 0,\\
F^2&= -\frac{1}{N^2}\sum_{i,k=1}^N\frac{\|v_i\|^2+\|v_k\|^2}{2}\| x_i-x_k\|^2
+\frac{\sigma}{2N^3}\sum_{i,j,k=1}^N\|x_i-x_j\|^2\|x_i-x_k\|^2
\\&\quad +\frac{c_q}{2N^2}\sum_{i,k=1}^N  \|x_\gamma -  x_i\|^2\|x_i-x_k\|^2
+\frac{1}{N^2}\sum_{i,k=1}^N\langle A_i-A_k,x_i-x_k\rangle
,\\
F^3&= \frac{2}{N^2}\sum_{i,k=1}^N\left(\|v_i\|^2\langle x_i,v_k\rangle +\|v_k\|^2\langle x_x,v_i\rangle\right)+\frac{2\sigma}{N^3}\sum_{i,j,k=1}^N\|x_i-x_j\|^2\langle x_i,v_k\rangle
\\
&\quad +\frac{c_q}{N^2}\sum_{i,k=1}^N \|x_\gamma -  x_i\|^2\langle x_i,v_k\rangle
+\frac{2}{N^2}\sum_{i,k=1}^N\langle A_i-A_k,v_i-v_k\rangle.
\end{align*}

Note that
\begin{align}\begin{aligned}\label{eq 5.4}
\| x_i-x_k\|^2&\leq \| x_i-x_\gamma+x_\gamma-x_k\|^2\\
&\leq 2\| x_i-x_\gamma\|^2+2\|x_\gamma-x_k\|^2\\
&\leq 4\mathcal{D}_x(t),
\end{aligned}\end{align}
and
\begin{align}\begin{aligned}\label{eq 5.5}
  |\langle x_i,v_k\rangle|&=|\langle x_i-x_k,v_k\rangle|\\
&\leq |\langle x_i-x_\gamma,v_k\rangle|+|\langle x_\gamma-x_k,v_k\rangle|\\
&\leq \mathcal{D}_x(t)+\mathcal{D}_v(t).
\end{aligned}\end{align}
 By \eqref{eq 5.4} and \eqref{eq 5.5},
we have
\begin{align*}
|F^2|&\leq  \frac{\mathcal{D}_v(t)}{N^2}\sum_{i,k=1}^N\| x_i-x_k\|^2
+\frac{2\sigma\mathcal{D}_x(t)}{N^2}\sum_{i,k=1}^N\|x_i-x_k\|^2
 +\frac{c_q\mathcal{D}_x(t)}{2 N^2}\sum_{i,k=1}^N \|x_i-x_k\|^2,\end{align*}
 and
 \begin{align*}
  | F^3|&\leq 4(\mathcal{D}_x(t)+\mathcal{D}_v(t))\frac{1}{N}\sum_{i=1}^N\|v_i\|^2+  \frac{2\sigma(\mathcal{D}_x(t)+\mathcal{D}_v(t))}{N^2}\sum_{i,j=1}^N\|x_i-x_j\|^2
\\
&\quad +\frac{c_q(\mathcal{D}_x(t)+\mathcal{D}_v(t))}{ N}\sum_{i=1}^N  \|x_\gamma -  x_i\|^2.\end{align*}
Similarly, we have
\begin{align*}
\frac{1}{N^2}\sum_{i,k=1}^N\| x_i-x_k\|^2&=\frac{1}{N^2}\sum_{i,k=1}^N\| x_i-x_\gamma+x_\gamma-x_k\|^2\\&\leq 4X_\gamma^1.
\end{align*}
Therefore, we obtain that
\begin{align*}
|F^1|&= 0,\\
|F^2|&\leq  \left( 4\mathcal{D}_v(t) +8\sigma\mathcal{D}_x(t)+2c_q\mathcal{D}_x(t)\right)X^1_\gamma
,\\
|F^3|&\leq  \left(8\sigma+c_q\right)(\mathcal{D}_x(t)+\mathcal{D}_v(t))X^1_\gamma+4(\mathcal{D}_x(t)+\mathcal{D}_v(t))X_\gamma^3
.
\end{align*}
The above implies the result in this lemma.

\end{proof}

We are ready to prove Theorem \ref{thm1}.  We first check that the coefficient matrix $M$ has the following six eigenvalues.
\[\left\{-c_p,~-c_p,~-c_p\pm\sqrt{-4c_q+c_p^2},~-c_p\pm\sqrt{-4c_q+c_p^2-4\sigma}\right\}.\]
Thus, their real parts are all negative. Let $D$ be the greatest real part of the above eigenvalues and we define
\[\mu:=-D>0.\]
 Then by Proposition \ref{prop 6.1}, for any $\epsilon>0$, there is  $t_0>0$ such that if $t>t_0$, then
\[0\leq \mathcal{D}_x(t)+\mathcal{D}_v(t)<\frac{\epsilon}{4\left(1+2\sigma+2c_q\right)}.\]
From Proposition \ref{prop 4.1} and  \ref{prop 6.2}, it follows that
\begin{align*}
  X(t)&=e^{A(t-t_0)}X(t_0)+\int_{t_0}^te^{A(t-s)}F(s)ds.  \end{align*}
This implies that
\begin{align*}
  \|X(t)\|&\leq e^{-\mu (t-t_0)}\|X(t_0)\|+\int_{t_0}^te^{-\mu (t-s)}\|F(s)\|ds\\
  &\leq e^{-\mu (t-t_0)}\|X(t_0)\|+\epsilon \int_{t_0}^te^{-\mu (t-s)}\|X(s)\|ds.  \end{align*}

%
%

Therefore, by the Gronwall inequality, if  $t>t_0$, then
\[\|X(t)\|\leq \|X(t_0)\|e^{-(\mu-\epsilon)(t-t_0)}.\]

\subsection{Practical rendezvouses}
In this part, we consider the  target tracking problem without acceleration information of the target. We assume that $\sigma_{ij}=\sigma>0$ and target speed and acceleration are bounded:
\[\|w_\gamma(t)\|,\|\dot w_\gamma(t)\|<C^w_\gamma,\quad t\geq 0,\]
where  $C^w_\gamma>0$ is a positive constant.
We assume that $U_i=0$. We first check that the coefficient matrix $M_\infty$ in Proposition \ref{prop 4.3} has the following eigenvalues.
\[\left\{-c_p,~-c_p\pm\sqrt{-4c_q+c_p^2}\right\}.\]
Thus, their real parts are all negative. Let $D_\infty$ be the greatest real part of the above eigenvalues and we define
\[\mu_\infty:=-D_\infty>0.\]

Let
\[X_\infty=\max_{1\leq i\leq N }\|X_i\|.\]
By Proposition \ref{prop 4.3}, for any fixed $t>0$, there is an index $i_t\in \{1,\ldots,N\}$ such that
\[\|X_{i_t}\|=X_{\infty}\]
and
\begin{align*}
\frac{d}{dt}X_{\infty}^2&=\frac{d}{dt}\|X_{i_t}\|^2\\
&=\langle X_{i_t}, M_\infty X_{i_t}\rangle +\langle X_{i_t}, F_{i_t}\rangle\\
&\leq -\mu_\infty\|X_{i_t}\|^2 +\| X_{i_t}\|\| F_{i_t}\|\\
&= -\mu_\infty X_\infty^2 +X_\infty\| F_{i_t}\|
.
\end{align*}
By direct calculation,
\begin{align*}
|F_i^1|&=0,\\
| F_i^2|&\leq
\frac{\|v_i\|^2 }{2}\frac{16\| x_i-x_\gamma\|^2}{\left(4-\|x_i-x_\gamma\|^2\right)^2}+\frac{16\sigma}{N\left(4-\|x_i-x_\gamma\|^2\right)^2}
\sum_{j=1}^N |\langle x_i-x_\gamma,    x_j - \langle x_i,x_j \rangle  x_i\rangle|
\\&\quad
+\frac{16|\langle x_i-x_\gamma,A_i\rangle|}{\left(4-\|x_i-x_\gamma\|^2\right)^2}
+\frac{64\langle x_i-x_\gamma, v_i\rangle^2}{\left(4-\|x_i-x_\gamma\|^2\right)^3}
\\  |F_i^3|&\leq \frac{32\sigma}{N\left(4-\|x_i-x_\gamma\|^2\right)^2}\sum_{j=1}^N |\langle v_i,    x_j \rangle|
+\frac{32|\langle v_i,A_i\rangle|}{\left(4-\|x_i-x_\gamma\|^2\right)^2}
+\frac{64\langle v_i, v_i\rangle|\langle x_i-x_\gamma, v_i\rangle|}{\left(4-\|x_i-x_\gamma\|^2\right)^3}.
\end{align*}
We note that
\begin{align*}
\langle x_i-x_\gamma,    x_j - \langle x_i,x_j \rangle  x_i\rangle&=\langle x_i-x_\gamma,    x_j -x_\gamma\rangle+\langle x_i-x_\gamma,    x_\gamma \rangle-\langle x_i-x_\gamma,    \langle x_i,x_j \rangle  x_i\rangle\\
&=\langle x_i-x_\gamma,    x_j -x_\gamma\rangle-\frac{1}{2}\|x_i-x_\gamma\|^2- \frac{ \langle x_i,x_j \rangle }{2}\| x_i-x_\gamma\|^2.
\end{align*}
This implies that
\[ |\langle x_i-x_\gamma,    x_j - \langle x_i,x_j \rangle  x_i\rangle|\leq 2\max_{1\leq i\leq N}\|x_i-x_\gamma\|^2 .\]
Similarly,
\[|\langle v_i,    x_j \rangle|=|\langle v_i,   x_j- x_i \rangle|\leq |\langle v_i,   x_j-x_\gamma \rangle|+|\langle v_i,   x_\gamma-x_i \rangle|\leq \|v_i\|^2+\max_{1\leq i\leq N}\|x_i-x_\gamma\|^2,\]
\[\langle x_i-x_\gamma, v_i\rangle^2\leq 4\|v_i\|^2.\]
Thus,
\begin{align*}
|F_i^1|&=0,\\
| F_i^2|&\leq
2 X_\infty+\frac{32\sigma\max_{1\leq i\leq N}\|x_i-x_\gamma\|^2}{\left(4-\|x_i-x_\gamma\|^2\right)^2}
+\frac{16|\langle x_i-x_\gamma,A_i\rangle|}{\left(4-\|x_i-x_\gamma\|^2\right)^2}
+\frac{256\|v_i\|^2}{\left(4-\|x_i-x_\gamma\|^2\right)^3},
  \\
  |F_i^3|&\leq 2\sigma X_\infty
+\frac{32\sigma\max_{1\leq i\leq N}\|x_i-x_\gamma\|^2}{\left(4-\|x_i-x_\gamma\|^2\right)^2}+\frac{32|\langle v_i,A_i\rangle|}{\left(4-\|x_i-x_\gamma\|^2\right)^2}
+\frac{256\|v_i\|^3}{\left(4-\|x_i-x_\gamma\|^2\right)^3}.
\end{align*}

By elementary calculation, we have
\begin{align*}
\left|\langle x_i-x_\gamma,A_i\rangle\right|\leq \| x_i-x_\gamma\|^2+ \frac{\|A_i\|^2}{4 }.
\end{align*}
Note that
\[\|A_i\|^2\leq 6\|w_\gamma\|^2      \| S_\gamma^{-1}(t)  p_i\|^2+3\|\dot w_\gamma\|^2.\]
Since $ p_i(t)=W_\gamma^t S_\gamma^tx_i(t)+ S_\gamma^t\dot x_i(t)$,
\[ \| S_\gamma^{-1}(t)  q_i\|^2\leq 2\|w_\gamma\|^2+2\|v_i\|^2\]
and
\[\|A_i\|^2\leq 12(C^w_\gamma)^2\|v_i\|^2+12(C^w_\gamma)^4+3(C^w_\gamma)^2.\]
Therefore, we have
\begin{align}\label{eq 6.1}
\left|\langle x_i-x_\gamma,A_i\rangle\right|\leq \| x_i-x_\gamma\|^2+ 3(C^w_\gamma)^2\|v_i\|^2
+3(C^w_\gamma)^4+ \frac{3(C^w_\gamma)^2}{4}.
\end{align}

Similarly, we have
\begin{align}\label{eq 6.2}
\left|\langle v_i,A_i \rangle\right|\leq \| v_i\|^2+ 3(C^w_\gamma)^2\|v_i\|^2
+3(C^w_\gamma)^4+ \frac{3(C^w_\gamma)^2}{4}.
\end{align}
By \eqref{eq 6.1}-\eqref{eq 6.2} and the above argument, if $\displaystyle \max_{1\leq i\leq N}\|x_i-x_\gamma\|< \frac{2\sqrt{C_1-1}}{\sqrt{C_1}}<2$, then
\begin{align*}
|F_i^1|&=0,\\
| F_i^2|&\leq (2+2\sigma C_1+5C_1+3(C^w_\gamma)^2)X_\infty
+3(C^w_\gamma)^4C_1^2+ \frac{3(C^w_\gamma)^2C_1^2}{4}
,
  \\
  |F_i^3|&\leq \left(2+2\sigma+2\sigma C_1+6(C^w_\gamma)^2\right) X_\infty
+6(C^w_\gamma)^4C_1^2+ \frac{6(C^w_\gamma)^2C_1^2}{4}+4X_\infty^{3/2}.
\end{align*}
We conclude that
\[\|F_i\|\leq \left(4+2\sigma+ 4\sigma C_1+5C_1+9(C^w_\gamma)^2\right)X_\infty+9(C^w_\gamma)^4C_1^2+ \frac{9(C^w_\gamma)^2C_1^2}{4}+4X_\infty^{3/2}.\]
Therefore, we obtain that
\begin{align}\label{eq 6.4}
\dot X_\infty\leq -\mu_\infty X_\infty+\left(4+2\sigma+ 4\sigma C_1+5C_1+9(C^w_\gamma)^2\right)X_\infty+9(C^w_\gamma)^4C_1^2+ \frac{9(C^w_\gamma)^2C_1^2}{4}+4X_\infty^{3/2}.
\end{align}
We choose $c_q$ and $c_p$ sufficiently large and take
\[X_{\infty}(0)=\frac{\sqrt{C_1-1}}{\sqrt{C_1}}.\]
Let $T>0$ be a maximal number such that on $t\in [0,T)$,
\begin{align}\label{eq 5.7}
\max_{1\leq i\leq N}\|x_i(t)-x_\gamma(t)\|< 2X_{\infty}(0),\quad \mbox{$t\in [0,T)$.}
\end{align}
By the initial condition and the continuity of the solution, there is a positive number $T>0$ satisfying \eqref{eq 5.7}. We claim that if $c_q$ and $c_p$ are sufficiently large, then $T=\infty$. We note that for a given initial data, $\sigma$, $C_1$, $C_\gamma^w$ are fixed constants. Therefore, on $t\in [0,T)$,
\begin{align}\label{eq 6.4}
\dot X_\infty\leq -\mu_\infty X_\infty+C X_\infty+C.
\end{align}
\eqref{eq 6.4} implies
\begin{align}\label{eq 6.5}
\dot X_\infty\leq -\frac{\mu}{2} X_\infty+C,
\end{align}
if $c_q$ and $c_p$ are sufficiently large.
Therefore, by the Gronwall inequality and \eqref{eq 6.5},
\[ X_\infty(t)\leq e^{-\frac{\mu}{2} t}X_\infty(0)+
e^{-\frac{\mu}{2} t}\frac{2Ce^{\frac{\mu}{2} t}-2C}{\mu}=e^{-\frac{\mu}{2} t}\left(X_\infty(0)-\frac{2C}{\mu}\right)+\frac{2C}{\mu}
.\]
If $c_q$ and $c_p$  are sufficiently large, then $\mu$ is sufficiently large and  $X_{\infty}\leq X_\infty(0)$. These imply that on $t\in [0,T)$,
\[\max_{1\leq i\leq N}\|x_i(t)-x_\gamma(t)\|\leq X_{\infty}\leq X_\infty(0)< 2X_{\infty}(0).\]
By the continuity of the solution, we obtain that
 \[T=\infty.\]
Finally, by the above, we obtain the following practical rendezvous estimate.
\[ X_\infty(t)\leq e^{-\frac{\mu}{2} t}\left(X_\infty(0)-\frac{2C}{\mu}\right)+\frac{2C}{\mu}
.\]
Thus, we complete the proof of Theorem \ref{thm2}.

\section{Simulation results}\label{sec6}
\setcounter{equation}{0}
In this section, we present several numerical simulations for the target tracking problem on the unit sphere and the flat space to verify the asymptotic complete rendezvous and practical rendezvous. We use the fourth-order Runge-Kutta method. We consider six $\alpha$-agents $\{(q_i, p_i)\}_{i=1}^6$ chasing one target  $(q_\gamma, p_\gamma)$. We assume that the control law for the target $(q_\gamma,p_\gamma)$ is given by
\[u_\gamma(t)=a(\cos t, \sin t,1),\]
where $a$ is a constant.
Throughout this section, we assume that the inter-particle bonding force parameter is given by
\[\sigma=1.\]

With the extra control law for agents
\[U_i=2\langle w_\gamma,  q_i\rangle (  q_i\times   p_i )+\dot w_\gamma(t)\times   q_i,\] the initial positions and velocities for the agents are randomly chosen in
\[(q_i(0), p_i(0))\in T\mathbb{S}^2\cap[-1,1]^3\times [-1,1]^3\] as follows:
 \phantom{-}
\begin{align*}
&q_1(0)=( \phantom{-}0.8132, \phantom{-}0.4989, -0.2993), &&q_2(0)=( \phantom{-}0.7198, \phantom{-}0.4908, \phantom{-}0.4908),\\
&q_3(0)=(-0.6758, -0.6991, \phantom{-}0.2330), &&q_4(0)=(-0.7878, \phantom{-}0.5627, -0.2501 ), \\
&q_5(0)=( -0.5440, -0.7504, \phantom{-}0.3752),&& q_6(0)=(-0.8599,  -0.3608, \phantom{-}0.3608),
\end{align*}
and
\begin{align*}
&p_1(0)=(\phantom{-}0.1028, -0.1884, -0.0347), &&p_2(0)=( -0.1168, \phantom{-}0.5118, -0.3405 ),\\
&p_3(0)=(-0.0821, \phantom{-}0.0857, \phantom{-}0.0191), &&p_4(0)=( -0.1454, -0.1506, \phantom{-}0.1189 ), \\
&p_5(0)=( \phantom{-}0.2220, -0.1040, \phantom{-}0.1137 ),&& p_6(0)=(-0.0003, \phantom{-}0.3768, \phantom{-}0.3759).
\end{align*}
The initial data for the target is
\[q_\gamma(0)=(-0.6451, 0.6605, -0.3840) \quad\text{and}\quad p_\gamma(0)=(0.1761, 0.3646, 0.3311).\]

\begin{figure}[!ht]
\begin{minipage}{0.23\textwidth}
\centering
\includegraphics[width=\textwidth]{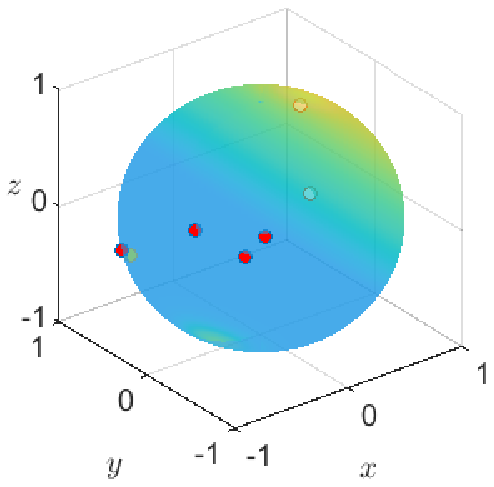}\\
(A)  $t=0$
\end{minipage}
\begin{minipage}{0.23\textwidth}
\centering
\includegraphics[width=\textwidth]{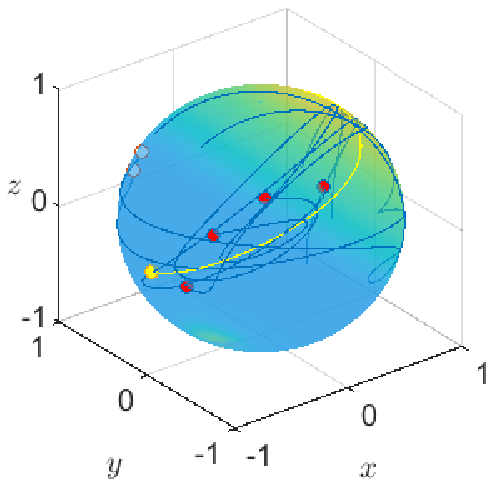}\\
(B) $t=5$
\end{minipage}
\begin{minipage}{0.23\textwidth}
\centering
\includegraphics[width=\textwidth]{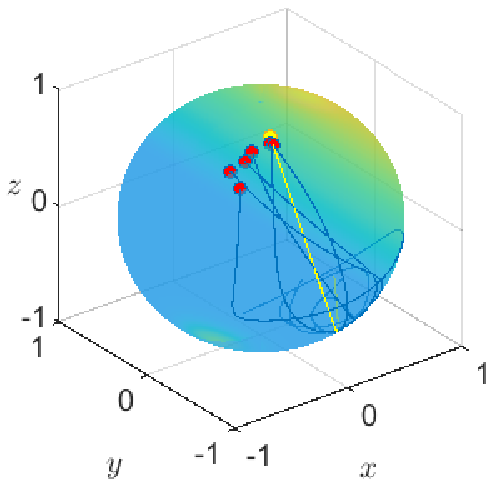}\\
(C) $t=25$
\end{minipage}
\begin{minipage}{0.23\textwidth}
\centering
\includegraphics[width=\textwidth]{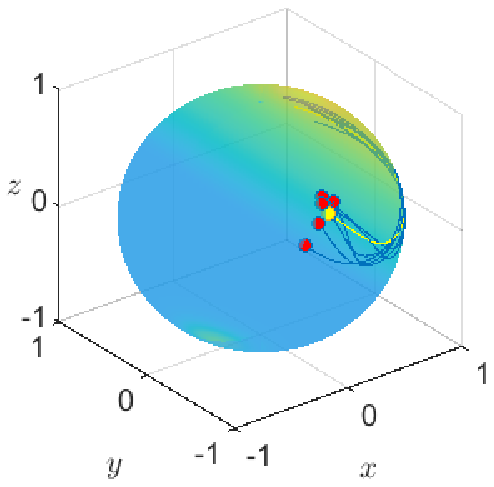}\\
(D) $t=40$
\end{minipage}
\begin{minipage}{0.23\textwidth}
\centering
\includegraphics[width=\textwidth]{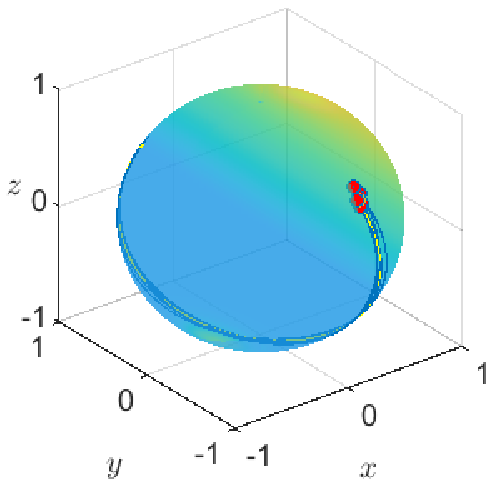}\\
(E) $t=55$
\end{minipage}
\begin{minipage}{0.23\textwidth}
\centering
\includegraphics[width=\textwidth]{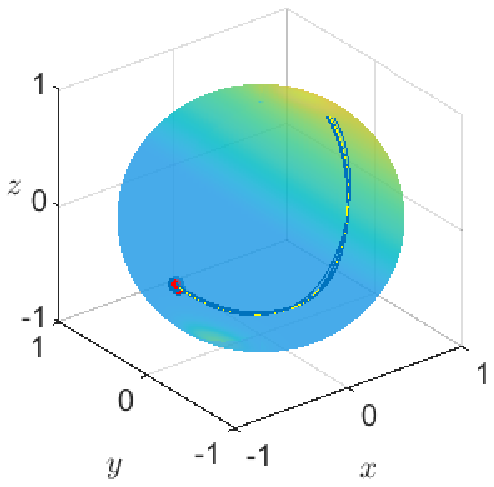}\\
(F) $t=70$
\end{minipage}
\begin{minipage}{0.23\textwidth}
\centering
\includegraphics[width=\textwidth]{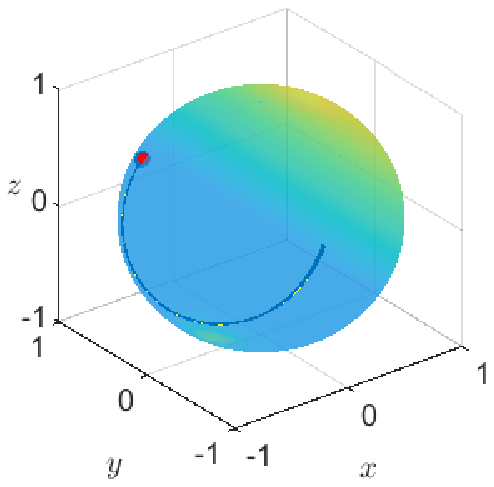}\\
(G) $t=100$
\end{minipage}
\begin{minipage}{0.23\textwidth}
\centering
\includegraphics[width=\textwidth]{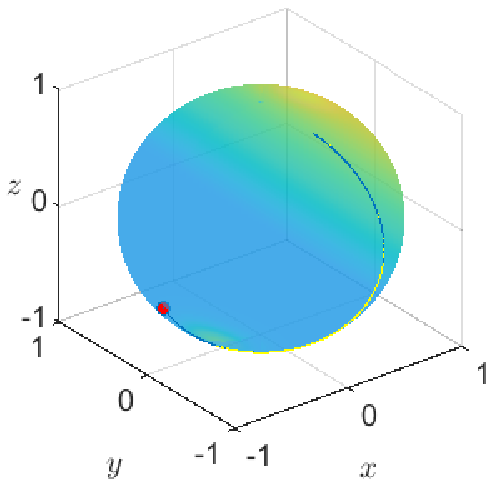}\\
(H) $t=200$
\end{minipage}
\caption{The time evolution of \eqref{main1} with extra control law \eqref{eq 6.0}}
\label{fig1}
\end{figure}

Note that all the initial positions and velocities   satisfy the admissible conditions in \eqref{initial}.
Since $\omega_\gamma=q_\gamma\times p_\gamma$, we can check that
\begin{align}
\begin{aligned}\label{eq 6.0}
U_i&=2\langle \omega_\gamma, q_i\rangle (q_i\times p_i)+\dot{\omega}_\gamma(t)\times q_i\\
&=2\langle \omega_\gamma, q_i\rangle (q_i\times p_i)+(\dot{q}_\gamma\times p_\gamma+q_\gamma\times\dot{p}_\gamma)\times q_i\\
&=2\langle \omega_\gamma, q_i\rangle (q_i\times p_i)+\left(q_\gamma\times \left[-\frac{\|p_\gamma\|^2}{\|q_\gamma\|^2}q_\gamma+\|q_\gamma\|^2u_\gamma-\langle u_\gamma ,q_\gamma\rangle q_\gamma\right]\right)\times q_i\\
&=2\langle \omega_\gamma, q_i\rangle (q_i\times p_i)+(q_\gamma\times\|q_\gamma\|^2u_\gamma)\times q_i.
\end{aligned}\end{align}
We fix
\[\sigma=1,~ c_q=5, ~c_p=0.1\quad \mbox{ and} \quad  a=0.5.\]
For this case,  the time evolution of \eqref{main1} is given in Figure \ref{fig1}. The red points and blue lines stand for the position $q_i(t)$ at $t=t_0$ and trajectories for the time interval $[t_0-3,t_0]$, respectively.  The yellow one is for the target agent $q_\gamma(t)$.
In addition, we can check that the asymptotic complete rendezvous occurs as we proved in Theorem \ref{thm1}. See Figure \ref{fig2}.
Here, the exponential function is $2e^{(-c_p+0.05)(t-40)}$.


\begin{figure}[!ht]
\begin{minipage}{0.3\textwidth}
\centering
\includegraphics[width=\textwidth]{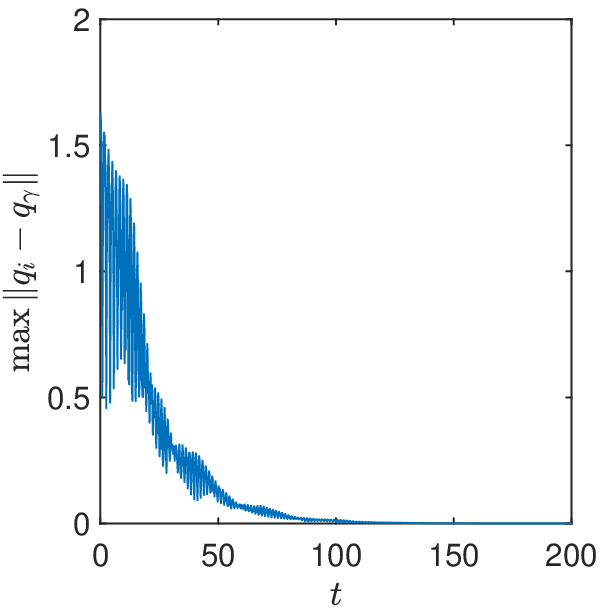}\\
(A)  $\max\limits_{1\le i\le 6}\|q_i(t)-q_\gamma\|$
\end{minipage}
\begin{minipage}{0.3\textwidth}
\centering
\includegraphics[width=\textwidth]{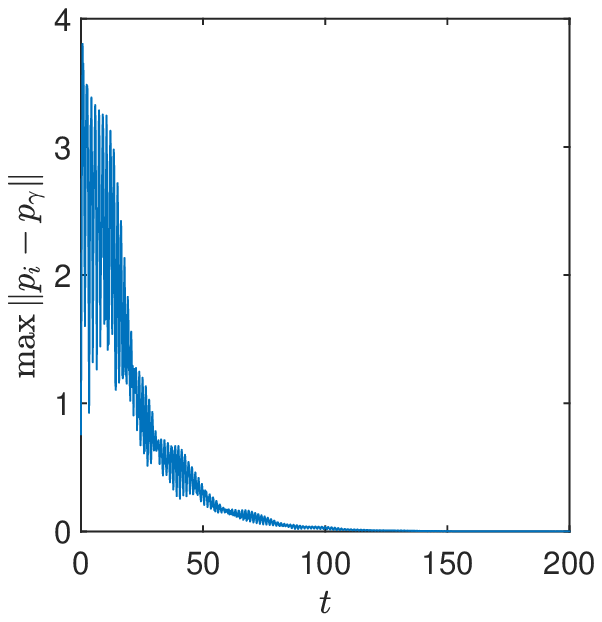}\\
(B) $\max\limits_{1\le i\le 6}\|p_i(t)-p_\gamma\|$
\end{minipage}
\begin{minipage}{0.3\textwidth}
\centering
\includegraphics[width=\textwidth]{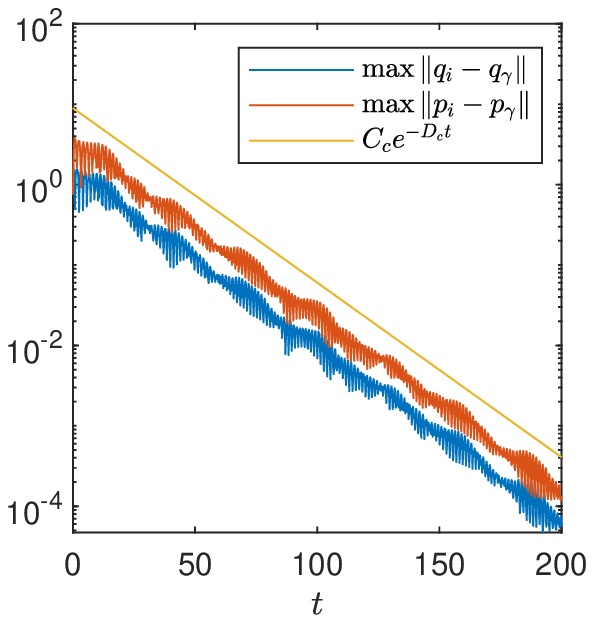}\\
(C) Logplot of (A) and (B)
\vspace{0.6em}
\end{minipage}
\caption{The asymptotic complete rendezvous}
\label{fig2}
\end{figure}

For the zero extra control law, i.e. $U_i=0$, we fix the parameters such that
\[\sigma=1, ~c_q=4,~ c_p=4,~ a=0.5.\]
The initial data of agents are randomly chosen but near the target as follows:
\begin{align*}
&q_1(0)=( -0.8147, -0.5366,  \phantom{-}0.2193), &&q_2(0)=( -0.4575, -0.8843,  \phantom{-}0.0922 ),\\
&q_3(0)=( -0.4335, -0.8173,  \phantom{-}0.3794), &&q_4(0)=( -0.8645, -0.2373,  \phantom{-}0.4429), \\
&q_5(0)=( -0.4420,  -0.7998, \phantom{-}0.4060),&& q_6(0)=( -0.4312, -0.6004, \phantom{-}0.6734  ),
\end{align*}
and
\begin{align*}
&p_1(0)=( 0.0228, -0.0750,  -0.0987), &&p_2(0)=(0.2519,-0.1263,   \phantom{-}0.0383 ),\\
&p_3(0)=(  0.0200, \phantom{-}0.0169,  \phantom{-}0.0594), &&p_4(0)=( 0.0388, -0.1447,   -0.0017 ), \\
&p_5(0)=( 0.0365,  \phantom{-}0.1109,   \phantom{-} 0.2583 ),&& p_6(0)=( 0.0081,  \phantom{-} 0.0050,  \phantom{-}0.0097  ).
\end{align*}
\begin{figure}[!ht]
\begin{minipage}{0.23\textwidth}
\centering
\includegraphics[width=\textwidth]{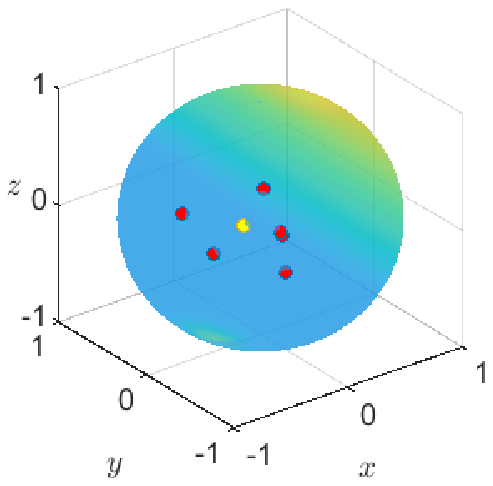}\\
(A)  $t=0$
\end{minipage}
\begin{minipage}{0.23\textwidth}
\centering
\includegraphics[width=\textwidth]{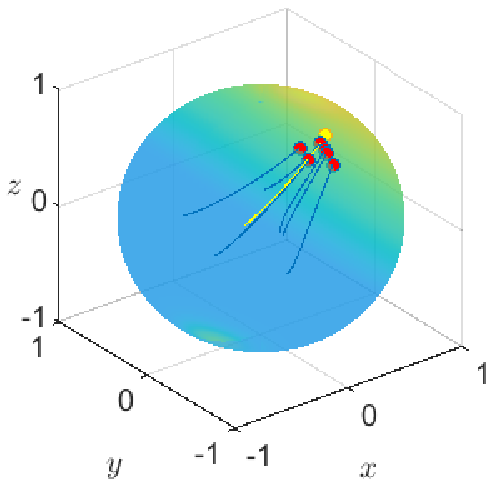}\\
(B) $t=1$
\end{minipage}
\begin{minipage}{0.23\textwidth}
\centering
\includegraphics[width=\textwidth]{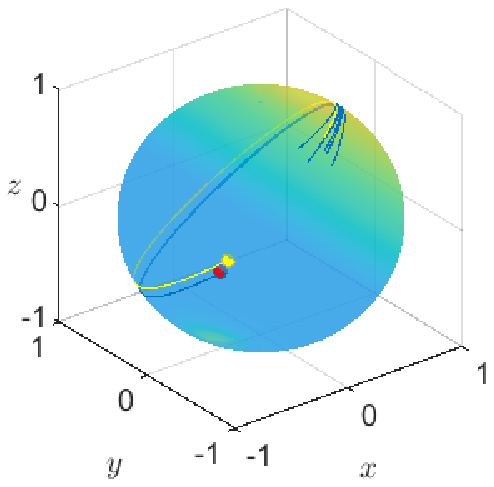}\\
(C) $t=4$
\end{minipage}
\begin{minipage}{0.23\textwidth}
\centering
\includegraphics[width=\textwidth]{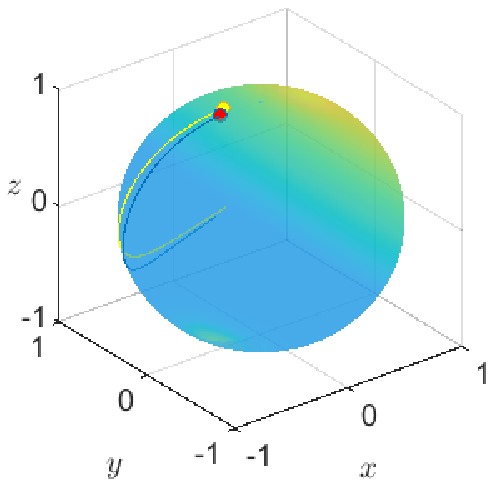}\\
(D) $t=9$
\end{minipage}
\begin{minipage}{0.23\textwidth}
\centering
\includegraphics[width=\textwidth]{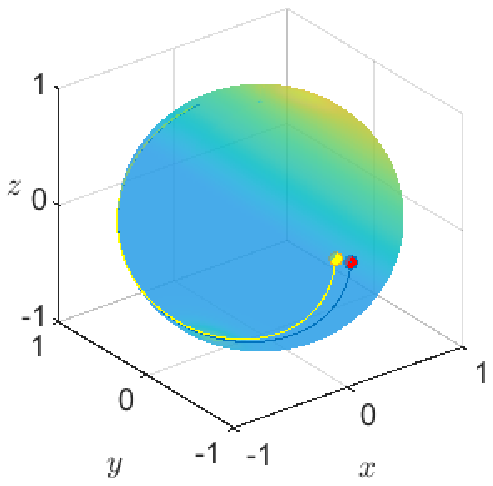}\\
(E) $t=35$
\end{minipage}
\begin{minipage}{0.23\textwidth}
\centering
\includegraphics[width=\textwidth]{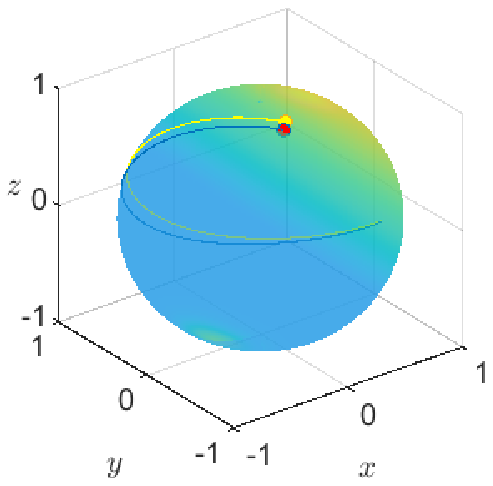}\\
(F) $t=90$
\end{minipage}
\begin{minipage}{0.23\textwidth}
\centering
\includegraphics[width=\textwidth]{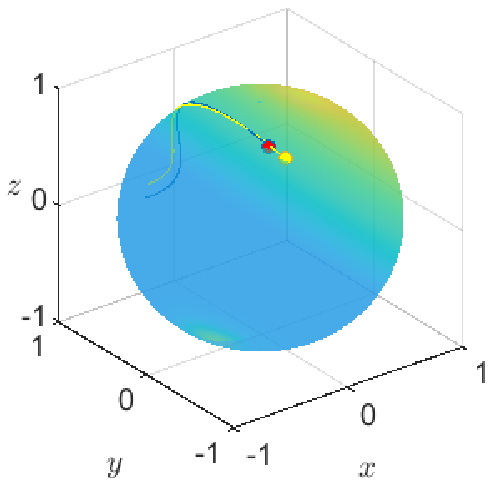}\\
(G) $t=150$
\end{minipage}
\begin{minipage}{0.23\textwidth}
\centering
\includegraphics[width=\textwidth]{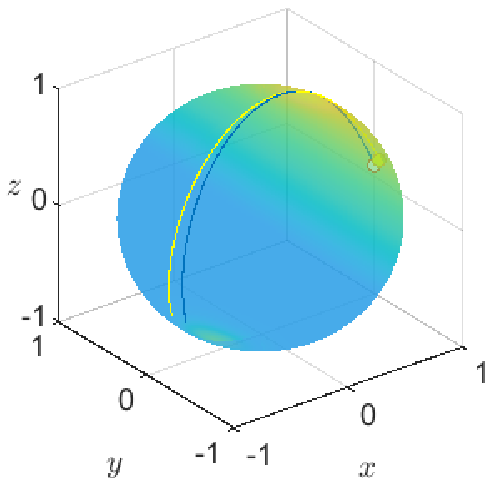}\\
(H) $t=200$
\end{minipage}
\caption{The time evolution of \eqref{main1} with control law}
\label{fig3}
\end{figure}
The initial data for the target is given by
\[q_\gamma(0)=(-0.6324, -0.6324, 0.4472) \quad\text{and}\quad p_\gamma(0)=(0.4712,-0.1742, 0.4199) .\]
Figure \ref{fig3} shows the time evolution of \eqref{main1} without extra control law.

We can see that the maximum distance
\[\max\limits_{1\le i\le 6}\|q_i(t)-q_\gamma(t)\|\] between agents and the target  is bounded by $2/\sqrt{c_p}$.
See Figure \ref{fig4}(A).
Let
\[d(t)=\max\limits_{1\le i\le 6}\|q_i(t)-q_\gamma(t)\|.\]
Figure \ref{fig4}(B) displays $d(t)$ at $t=100$ with respect to $c_p$. As $c_p$ increases,  the maximum distance between agents and target decreases.
Therefore, we observe that the asymptotic practical rendezvous occurs.

\begin{figure}[!ht]
\begin{minipage}{0.3\textwidth}
\centering
\includegraphics[width=\textwidth]{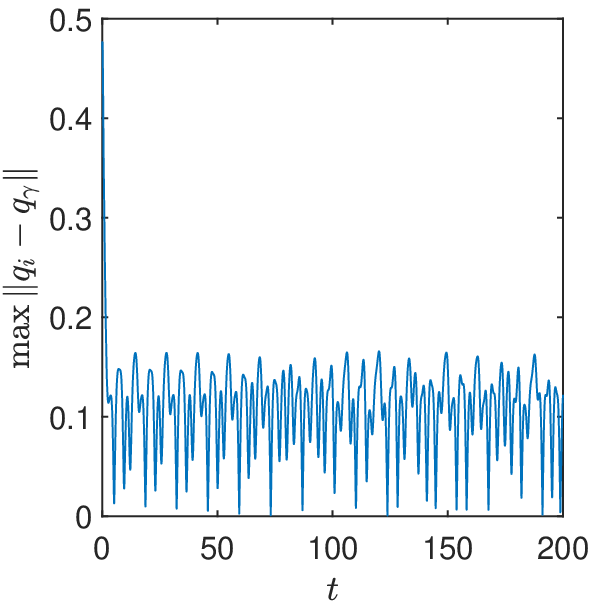}
(A) $\max\limits_{1\le i\le 6}\|q_i(t)-q_\gamma(t)\|$
\end{minipage}
\begin{minipage}{0.3\textwidth}
\centering
\includegraphics[width=\textwidth]{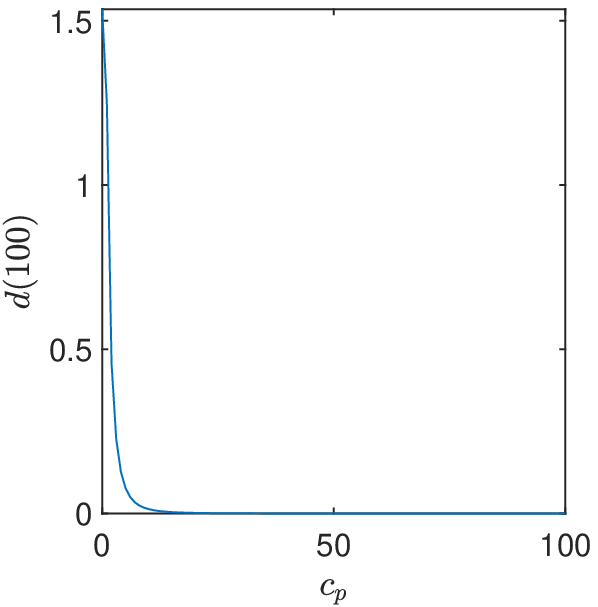}
(B)  $d(t)$ at $t=100$
\end{minipage}
\caption{The asymptotic practical rendezvous }
\label{fig4}
\end{figure}

With the extra control law, we observed the asymptotic complete rendezvous in Figure \ref{fig1} and Figure \ref{fig2}. However, if we choose the parameter $c_p$ as zero, then the agents are not able to track the target. See Figure \ref{fig9}. Here, other parameters and initial data are the same as the case in Figure \ref{fig1}. In the absence of the velocity alignment term, the agents easily escape the sphere due to the  accumulation of errors. To overcome this, as in \cite{C-K-S2}, we add the following feedback term $f_i^0$ on the second equation of \eqref{main1}.
\[f_i^0=-k_0\Big(q_i-\frac{q_i}{\|q_i\|}\Big),\]
where $k_0=10^4$.
From this, we conclude that the velocity alignment operator is crucial in this target tracking algorithm.

\begin{figure}[!ht]
\begin{minipage}{0.23\textwidth}
\centering
\includegraphics[width=\textwidth]{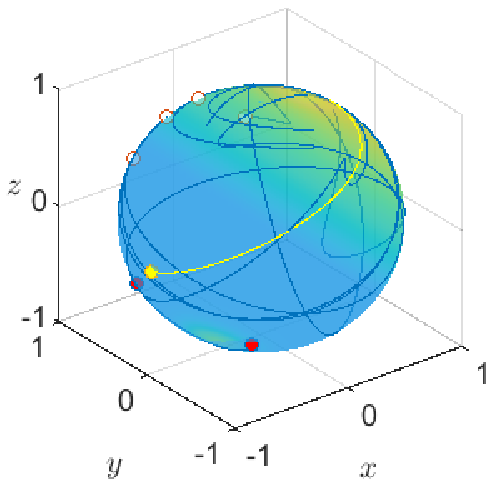}\\
(A)  $t=5$
\end{minipage}
\begin{minipage}{0.23\textwidth}
\centering
\includegraphics[width=\textwidth]{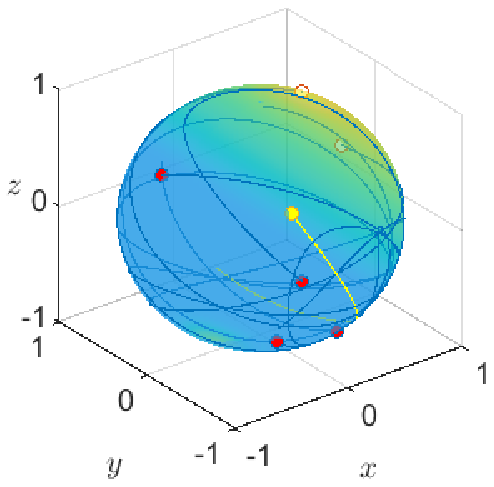}\\
(B) $t=55$
\end{minipage}
\begin{minipage}{0.23\textwidth}
\centering
\includegraphics[width=\textwidth]{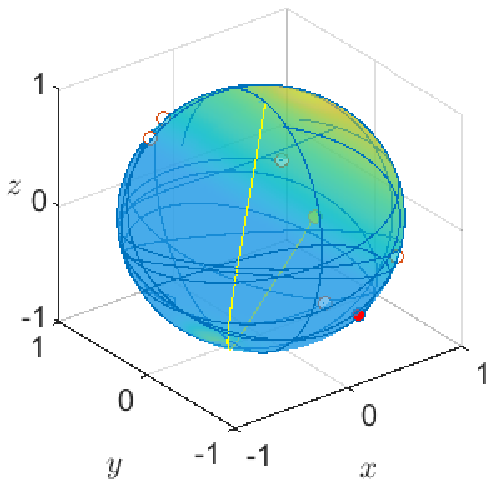}\\
(C) $t=100$
\end{minipage}
\begin{minipage}{0.23\textwidth}
\centering
\includegraphics[width=\textwidth]{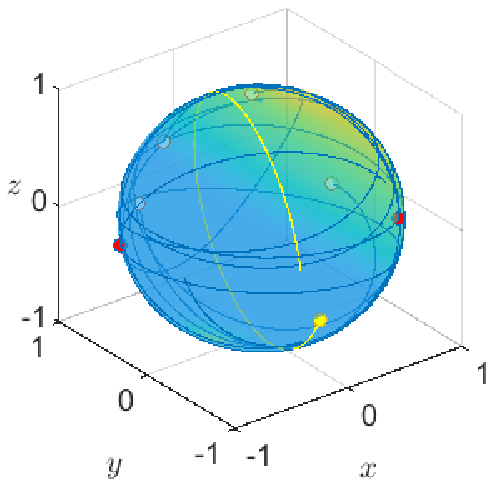}\\
(D) $t=200$
\end{minipage}
\caption{The time evolution of \eqref{main1} with extra control law \eqref{eq 6.0} and $c_p=0$}
\label{fig9}
\end{figure}

As we mentioned in Subsection \ref{sec 2.3}, the flocking term is negligible for the target tracking problem \eqref{main1}. With the same parameters of Figure \ref{fig1} and Figure \ref{fig3}, the numerical results of \eqref{main1} including the rotational flocking term \[\sum_{j=1}^N \frac{\psi_{ij}}{N}(R_{q_j\to q_i}(p_j)-p_i),\] where $\psi_{ij}=1$ is given in Figure \ref{fig5}. It is confirmed that the flocking term does not affect the results. See also Figure \ref{fig6}.

\begin{figure}[!ht]
\begin{minipage}{0.3\textwidth}
\centering
\includegraphics[width=\textwidth]{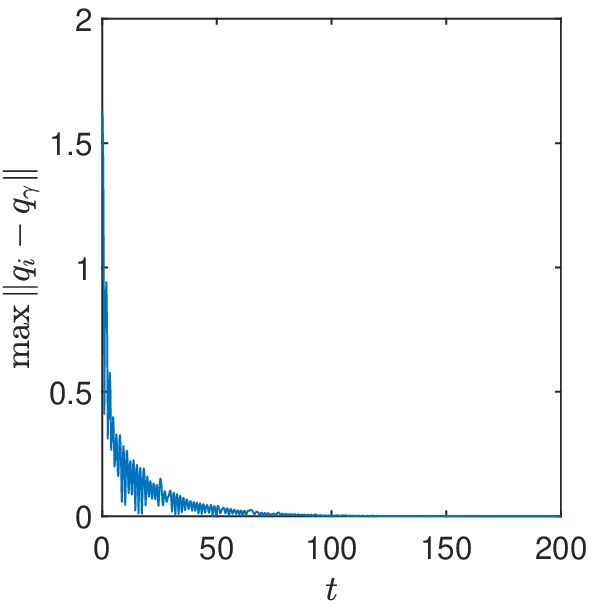}
\end{minipage}
\begin{minipage}{0.3\textwidth}
\centering
\includegraphics[width=\textwidth]{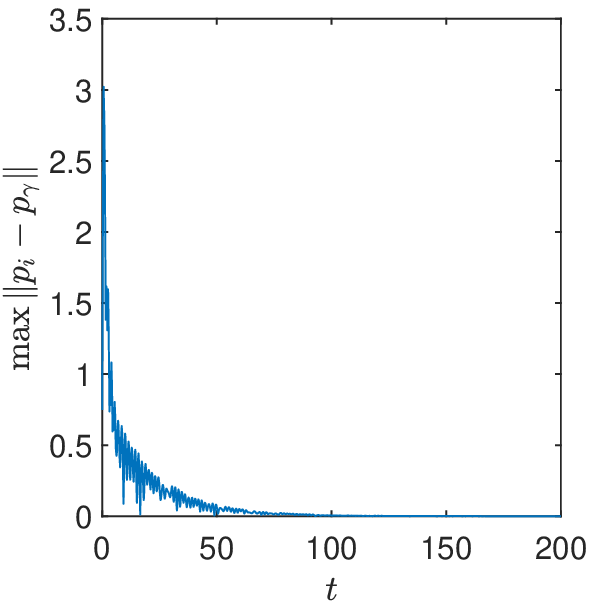}
\end{minipage}
\begin{minipage}{0.3\textwidth}
\centering
\includegraphics[width=\textwidth]{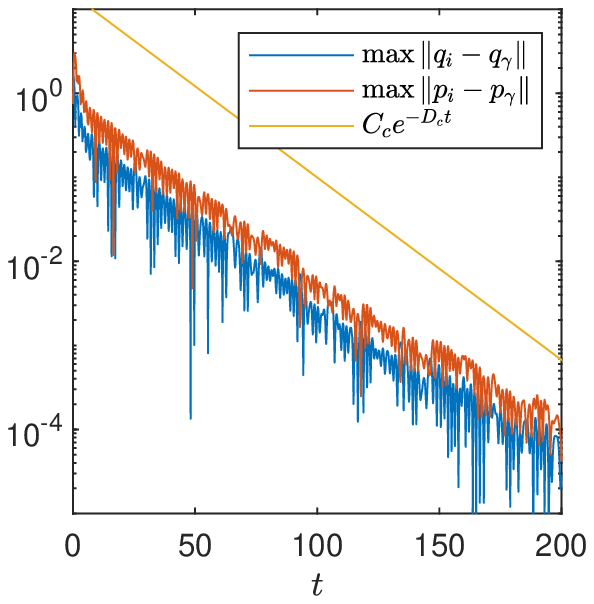}
\end{minipage}
\caption{The numerical results with flocking term and the same parameters with Figure \ref{fig2} }
\label{fig5}
\end{figure}
\begin{figure}[!ht]
\begin{minipage}{0.3\textwidth}
\centering
\includegraphics[width=\textwidth]{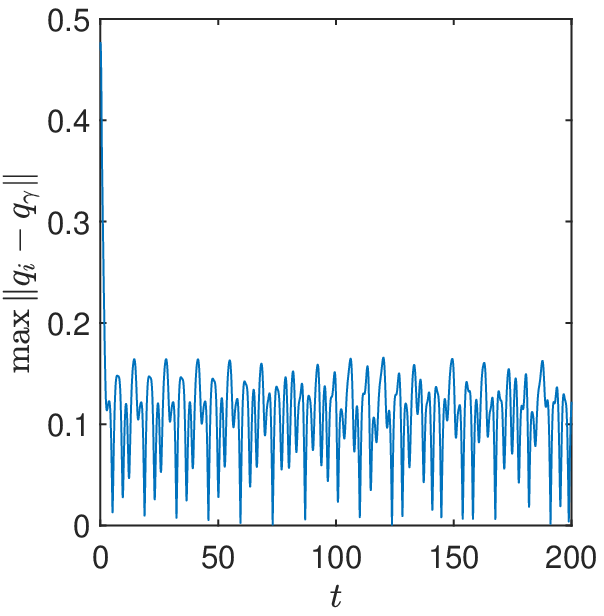}
\end{minipage}
\begin{minipage}{0.3\textwidth}
\centering
\includegraphics[width=\textwidth]{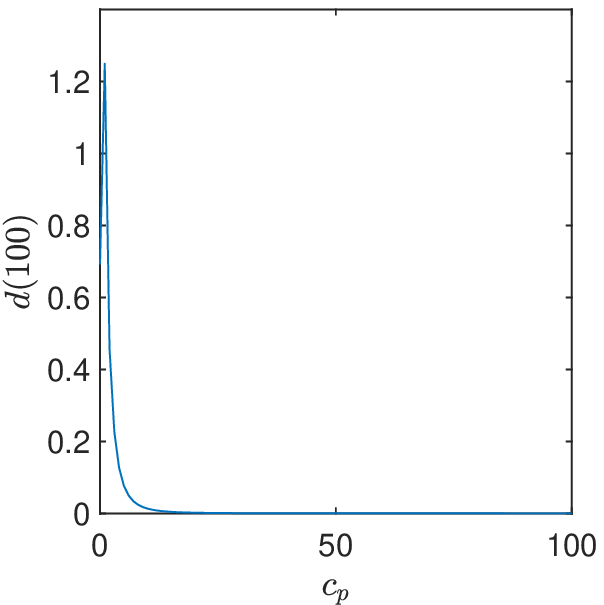}
\end{minipage}
\caption{The numerical results with flocking term and the same parameters with Figure \ref{fig4} }
\label{fig6}
\end{figure}

Finally, we compare the target tracking problems on a sphere and flat space numerically. To compare the two cases, we impose the periodic boundary for the flat space and fix parameters such as $\sigma=1$, $c_q=5$, and $c_p=0.1$.
Let
\[u_\gamma=(a\cos t, a\sin t, a),\]
where $a=0.5$ and $u_i=u_\gamma$.
Then we can observe that the complete rendezvous occurs. See Figure \ref{fig7}.
If $u_i=0$, then we observe the practical rendezvous. See Figure \ref{fig8}.
\begin{figure}[!ht]
\begin{minipage}{0.23\textwidth}
\centering
\includegraphics[width=\textwidth]{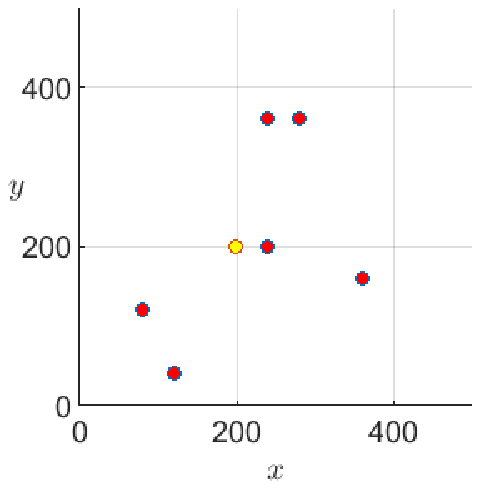}\\
(A)  $t=0$
\end{minipage}
\begin{minipage}{0.23\textwidth}
\centering
\includegraphics[width=\textwidth]{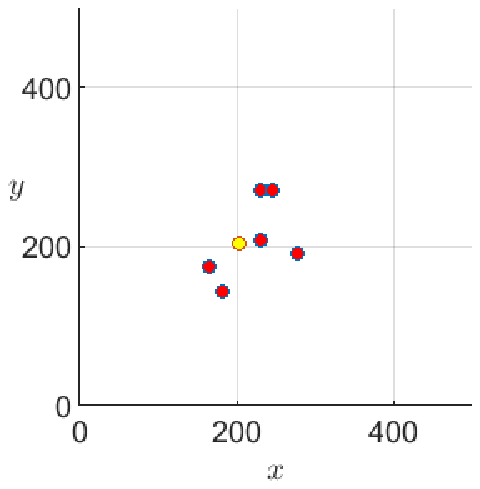}\\
(B) $t=5$
\end{minipage}
\begin{minipage}{0.23\textwidth}
\centering
\includegraphics[width=\textwidth]{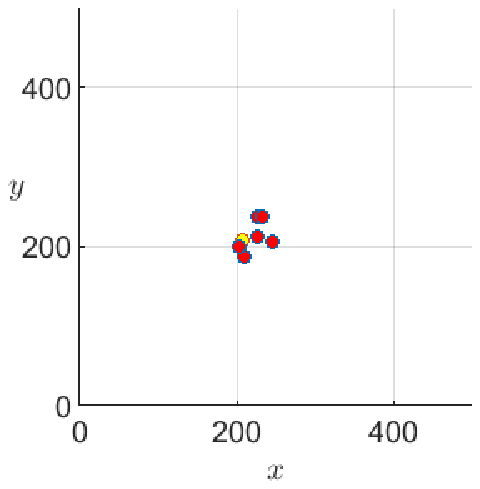}\\
(C) $t=10$
\end{minipage}
\begin{minipage}{0.23\textwidth}
\centering
\includegraphics[width=\textwidth]{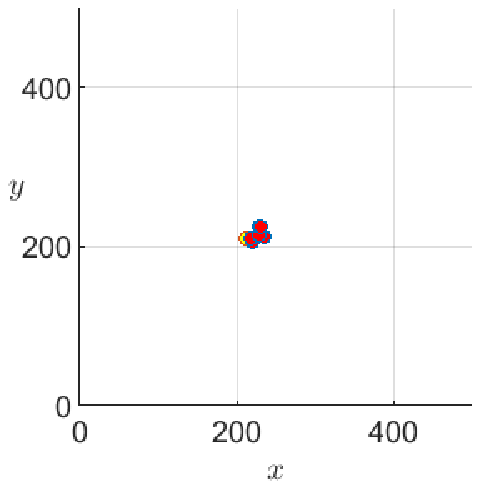}\\
(D) $t=15$
\end{minipage}
\begin{minipage}{0.23\textwidth}
\centering
\includegraphics[width=\textwidth]{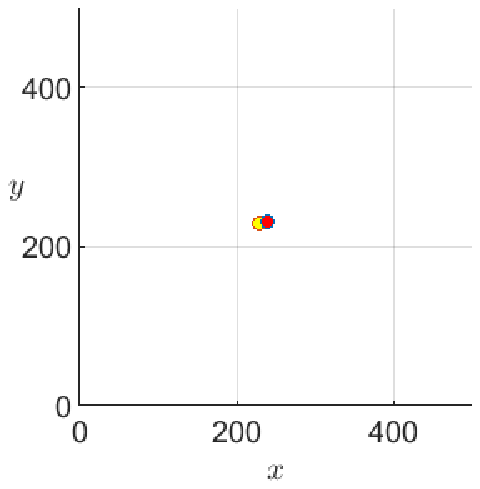}\\
(E) $t=40$
\end{minipage}
\begin{minipage}{0.23\textwidth}
\centering
\includegraphics[width=\textwidth]{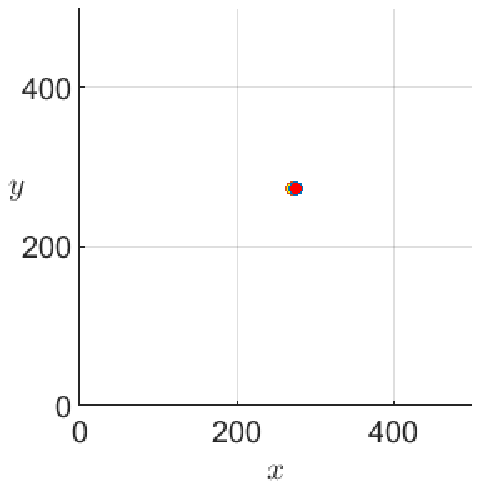}\\
(F) $t=100$
\end{minipage}
\begin{minipage}{0.23\textwidth}
\centering
\includegraphics[width=\textwidth]{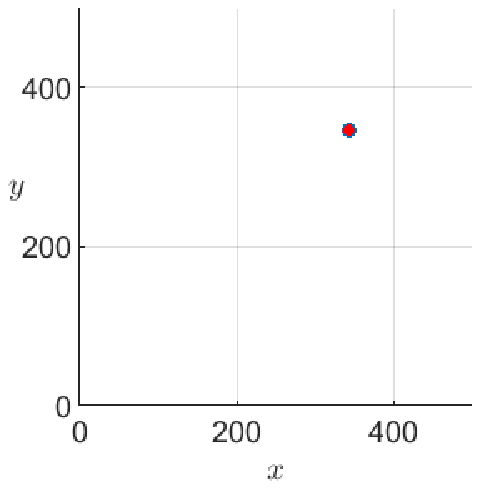}\\
(G) $t=200$
\end{minipage}
\begin{minipage}{0.23\textwidth}
\centering
\includegraphics[width=\textwidth]{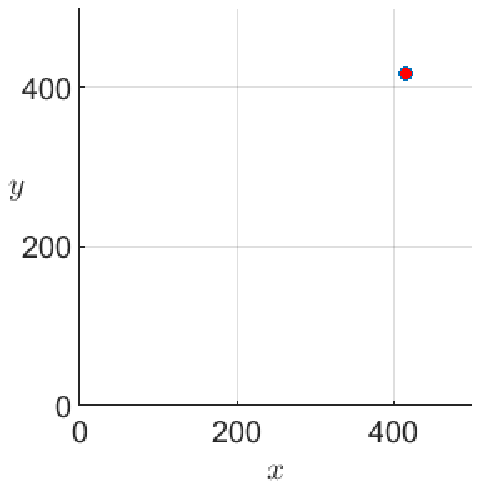}\\
(H) $t=300$
\end{minipage}
\caption{The snapshops of complete rendezvous on flat space}
\label{fig7}
\end{figure}
\begin{figure}[!ht]
\begin{minipage}{0.23\textwidth}
\centering
\includegraphics[width=\textwidth]{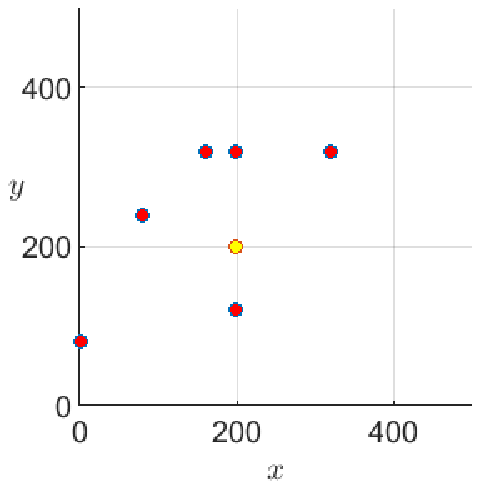}\\
(A)  $t=0$
\end{minipage}
\begin{minipage}{0.23\textwidth}
\centering
\includegraphics[width=\textwidth]{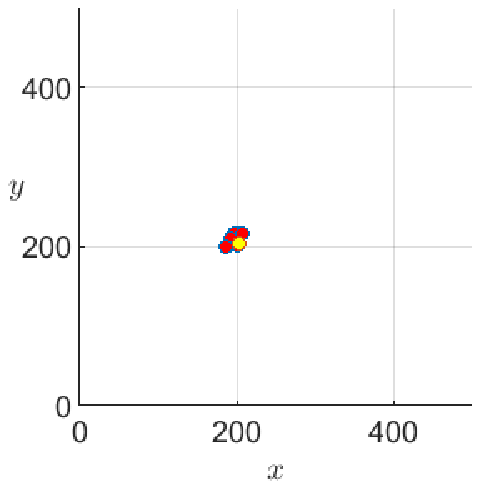}\\
(B) $t=5$
\end{minipage}
\begin{minipage}{0.23\textwidth}
\centering
\includegraphics[width=\textwidth]{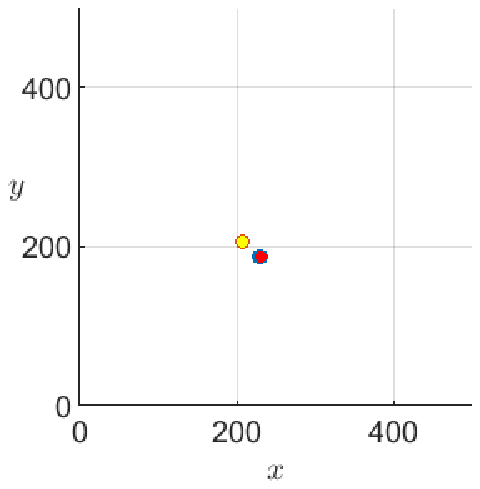}\\
(C) $t=10$
\end{minipage}
\begin{minipage}{0.23\textwidth}
\centering
\includegraphics[width=\textwidth]{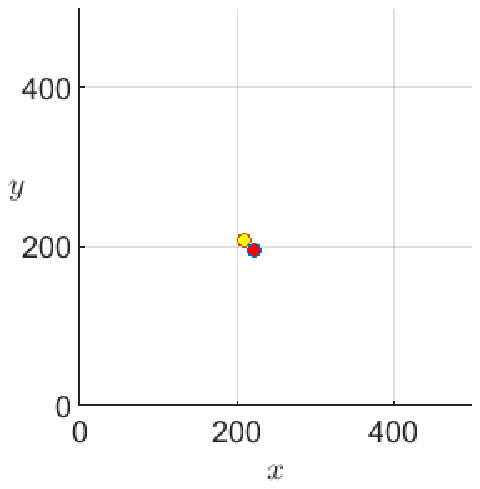}\\
(D) $t=15$
\end{minipage}
\begin{minipage}{0.23\textwidth}
\centering
\includegraphics[width=\textwidth]{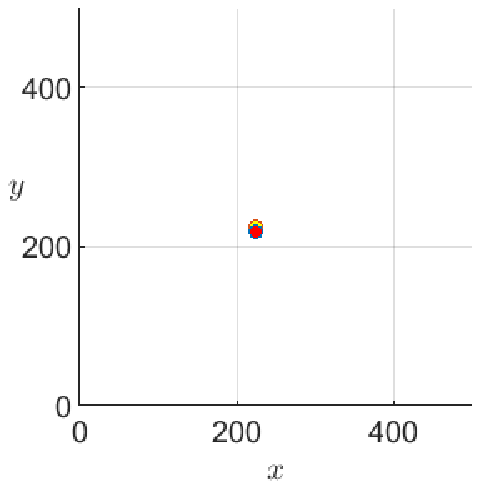}\\
(E) $t=40$
\end{minipage}
\begin{minipage}{0.23\textwidth}
\centering
\includegraphics[width=\textwidth]{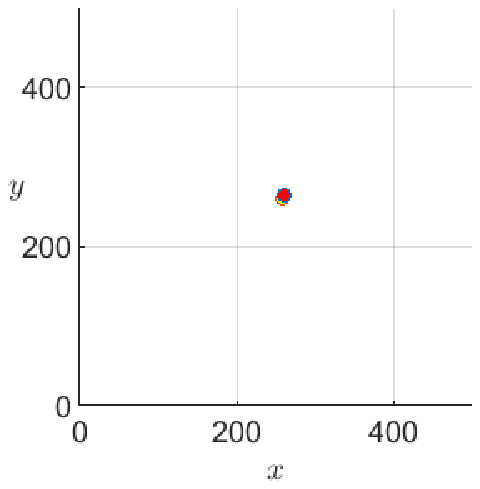}\\
(F) $t=100$
\end{minipage}
\begin{minipage}{0.23\textwidth}
\centering
\includegraphics[width=\textwidth]{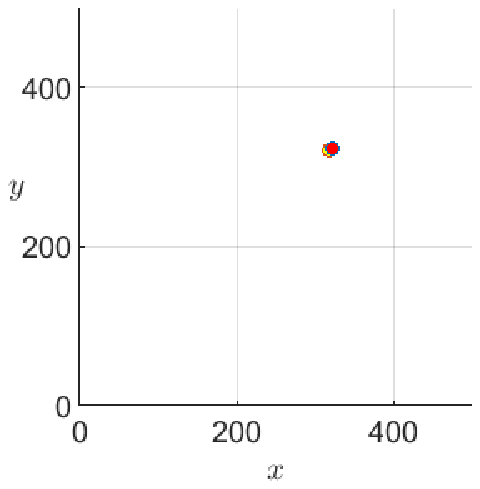}\\
(G) $t=200$
\end{minipage}
\begin{minipage}{0.23\textwidth}
\centering
\includegraphics[width=\textwidth]{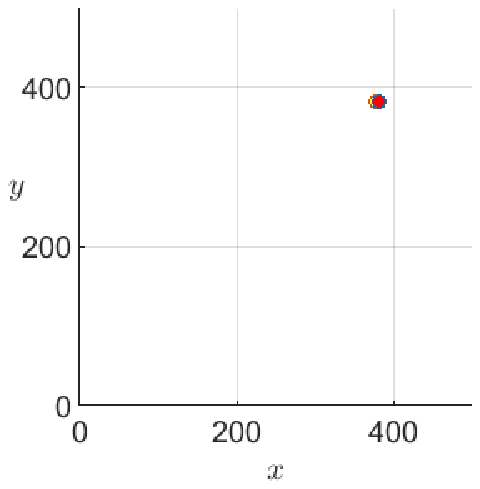}\\
(H) $t=300$
\end{minipage}
\caption{The snapshops of practical rendezvous on flat space}
\label{fig8}
\end{figure}

\section{Conclusion}\setcounter{equation}{0}\label{sec7}
In this paper, we proposed a novel model for target tracking on spherical geometry.  With the target's position, velocity, and acceleration, if the initial energy of agents is small or the bonding force between the target and each agent is larger than the one between agents, the complete rendezvous occurs. When only the information of position and velocity is known and the target's angular velocity and its time derivative are bounded, the practical rendezvous is obtained for relatively large intra-bonding forces.  The target tracking problems on $\mathbb{S}^2$ with time delay, white noises from the observation, and measurement are also interesting topics. These issues will be discussed in our future researches.

\appendix
\section{Properties of the admissible rotation operator}
In this part, we consider admissible rotation operators on a sphere and their properties.  The rotation operator appears naturally for defining the flocking on a sphere  \cite{C-K-S}.  Let $R_{\cdot\rightarrow \cdot}$ be Rodrigues' rotation operator given by
\[R_{x_k \rightarrow x_i} (v_k)=R(x_k,x_i)\cdot v_k\]
and for $x_k\ne x_i$,
\begin{align*}
\begin{aligned}
R(x_k,x_i):=
\langle x_k,x_i\rangle  I + x_i x_k^T - x_k x_i^T + (1-  \langle x_k,x_i\rangle) \left( \frac{x_k \times x_i}{|x_k \times x_i|} \right) \left( \frac{x_k \times x_i}{|x_k \times x_i|} \right)^T.
\end{aligned}
\end{align*}
Here, $x_k$, $x_i$ and $v_j$ are three dimensional column vectors. The rotation operator $\ro$ has many good properties we desired or needed to be physically established and we can construct a flocking model by replacing the velocity difference  term $v_i-v_j$ in the flat space to $ \rji v_j(t) - v_i(t)$. See \cite{C-K-S} for the details. However, there are some inconvenient points  due to the presence of singularity on $\ro$. Therefore, we can naturally ask whether such alternatives can be found.

The idea to find the alternative is as follows. First, classify the properties that the rotation operators must satisfy, and find all the operators that satisfy the properties. Next, we will choose one of those operators that meets our needs. Our option will be the simplest of the possible operators. This form has various advantages. It is convenient to calculate, and it shares most of the good properties of the rotation operator $\ro$ previously defined.  By removing the singularity, we easily show the global-in-time existence and uniqueness of the new model in \eqref{main1}. See  \cite{C-K-S} for the existence and uniqueness of the model with $\ro$.

 To construct a unit sphere model with the Newtonian equation, we need a modification of $v_j-v_i$ terms, which is the first motivation of the operators $\rji$ in \cite{C-K-S}. As we compute the velocity difference between $v_i$ and $v_j$ at the point $x_i$, we should transform $v_j$ into a tangential vector of the sphere at $x_i$. We note that  the typical ansatz for the flocking motion on a sphere is circle motions. In order to include circle motions along one great circle, the operator should coincide with a rotation operator in two dimensions, a $(x_i,x_j)$-plane. In other words, an admissible rotation operator $M$ from $z_1$ to $z_2$ can be a $3 \times 3$ matrix such that
\begin{subequations}
\label{eqn:adm}
\begin{align}
\label{eqn:adma}
&Mz_1 = z_2, \qquad M z_2 = 2\langle z_1, z_2\rangle  z_2 - z_1,\\
\label{eqn:admb}
&\langle Mv , z_2 \rangle = 0 \hbox{ for any } z_1, z_2 \in \D \hbox{ and } v \in T_{z_1} \D.
\end{align}
\end{subequations}
In the next proposition, we can prove that  the  admissible choices in \eqref{eqn:adm} for the rotation operator are equivalent to the following set.
\begin{align}
\label{eqn:ads}
\A_{z_1 \rightarrow z_2} := \left\{ \hR_{z_1 \rightarrow z_2} + a (z_1 \times z_2) (z_1 \times z_2)^T + b(z_1 - \langle z_1, z_2\rangle  z_2 ) (z_1 \times z_2)^T : a,b \in \R\right\},
\end{align}
where $\hR_{z_1 \rightarrow z_2}$ is the operator defined in \eqref{eqn:hrot}.

\begin{proposition}
Suppose that unit vectors $z_1$ and $z_2$ are linearly independent. Then, a $3\times 3$ matrix $M$ satisfies \eqref{eqn:adm} if and only if $M \in \A_{z_1 \rightarrow z_2}$.
\end{proposition}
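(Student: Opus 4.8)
The plan is to reduce everything to the action of $M$ on the basis $\{z_1,\,z_2,\,z_1\times z_2\}$ of $\R^3$, which is a legitimate basis precisely because $z_1$ and $z_2$ are linearly independent. A $3\times 3$ matrix is uniquely determined by its values on these three vectors, so the proposition amounts to (i) identifying which triples $\bigl(Mz_1,\,Mz_2,\,M(z_1\times z_2)\bigr)$ are compatible with \eqref{eqn:adm}, and (ii) checking that these are exactly the triples realised by the two-parameter family $\A_{z_1\rightarrow z_2}$. Throughout I will use $\|z_1\|=\|z_2\|=1$, the orthogonality of $z_1\times z_2$ to both $z_1$ and $z_2$, and $\|z_1\times z_2\|^2=1-\langle z_1,z_2\rangle^2$.

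First I would dispatch the easy inclusion $\A_{z_1\rightarrow z_2}\subseteq\{M:\eqref{eqn:adm}\text{ holds}\}$. A direct computation gives $\hR_{z_1\rightarrow z_2}z_1=z_2$ and $\hR_{z_1\rightarrow z_2}z_2=2\langle z_1,z_2\rangle z_2-z_1$, so $\hR_{z_1\rightarrow z_2}$ satisfies \eqref{eqn:adma}. Each correction term in \eqref{eqn:ads} is a rank-one matrix whose row vector is $(z_1\times z_2)^T$, and since $(z_1\times z_2)^Tz_1=(z_1\times z_2)^Tz_2=0$ these terms annihilate $z_1$ and $z_2$ and hence do not disturb \eqref{eqn:adma}. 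For \eqref{eqn:admb}, for $v\perp z_1$ one has $\hR_{z_1\rightarrow z_2}v=\langle z_1,z_2\rangle v-\langle z_2,v\rangle z_1$, which is orthogonal to $z_2$ (the two terms cancel after pairing with $z_2$), while the correction terms send $v$ into $\mathrm{span}\{z_1\times z_2,\ z_1-\langle z_1,z_2\rangle z_2\}$, both of whose spanning vectors are orthogonal to $z_2$. Hence every element of $\A_{z_1\rightarrow z_2}$ is admissible.

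For the reverse inclusion, let $M$ satisfy \eqref{eqn:adm}. Condition \eqref{eqn:adma} fixes $Mz_1$ and $Mz_2$. The tangent space $z_1^\perp$ is spanned by $z_1\times z_2$ and $w:=z_2-\langle z_1,z_2\rangle z_1$; applying \eqref{eqn:admb} to $v=w$ and using \eqref{eqn:adma} gives $\langle Mw,z_2\rangle=\langle Mz_2,z_2\rangle-\langle z_1,z_2\rangle\langle Mz_1,z_2\rangle=\langle z_1,z_2\rangle-\langle z_1,z_2\rangle=0$, so this instance of \eqref{eqn:admb} is vacuous, and the only genuine constraint from \eqref{eqn:admb} is $\langle M(z_1\times z_2),z_2\rangle=0$, i.e. $M(z_1\times z_2)\in z_2^\perp=\mathrm{span}\{z_1\times z_2,\ z_1-\langle z_1,z_2\rangle z_2\}$. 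Write $M(z_1\times z_2)=\alpha(z_1\times z_2)+\beta(z_1-\langle z_1,z_2\rangle z_2)$. On the other hand, from $\hR_{z_1\rightarrow z_2}(z_1\times z_2)=\langle z_1,z_2\rangle(z_1\times z_2)$ one computes that the element of $\A_{z_1\rightarrow z_2}$ with parameters $a,b$ sends $z_1\times z_2$ to $[\langle z_1,z_2\rangle+a(1-\langle z_1,z_2\rangle^2)](z_1\times z_2)+b(1-\langle z_1,z_2\rangle^2)(z_1-\langle z_1,z_2\rangle z_2)$. Since linear independence forces $|\langle z_1,z_2\rangle|<1$, hence $1-\langle z_1,z_2\rangle^2\neq0$, the assignments $\alpha=\langle z_1,z_2\rangle+a(1-\langle z_1,z_2\rangle^2)$, $\beta=b(1-\langle z_1,z_2\rangle^2)$ define a bijection of $\R^2$, so one can solve for $a,b$ making this element of $\A_{z_1\rightarrow z_2}$ agree with $M$ on all of $z_1,z_2,z_1\times z_2$; therefore $M\in\A_{z_1\rightarrow z_2}$.

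The substantive verifications are the products $\hR_{z_1\rightarrow z_2}z_1$, $\hR_{z_1\rightarrow z_2}z_2$, $\hR_{z_1\rightarrow z_2}(z_1\times z_2)$ and the orthogonality relations among $z_1,z_2,z_1\times z_2$, all routine. I expect the one point needing care to be the bookkeeping in the reverse direction: recognising that \eqref{eqn:admb} collapses to the single scalar condition $\langle M(z_1\times z_2),z_2\rangle=0$, so that the admissible matrices form an affine family of dimension exactly two, matching the two free parameters $a,b$ in \eqref{eqn:ads}; without this observation one might fear the admissible set is larger than $\A_{z_1\rightarrow z_2}$.
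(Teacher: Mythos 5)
Your proposal is correct and follows essentially the same route as the paper: verify that $\hR_{z_1\rightarrow z_2}$ and the rank-one corrections satisfy \eqref{eqn:adm}, then for the converse use that \eqref{eqn:adma} pins down $M$ on $z_1,z_2$ while \eqref{eqn:admb} confines $M(z_1\times z_2)$ to $\mathrm{span}\{z_1\times z_2,\ z_1-\langle z_1,z_2\rangle z_2\}$, whose two degrees of freedom are matched by $a,b$ since $\|z_1\times z_2\|^2=1-\langle z_1,z_2\rangle^2\neq 0$. Your explicit remark that the instance of \eqref{eqn:admb} on $z_2-\langle z_1,z_2\rangle z_1$ is already implied by \eqref{eqn:adma} is a nice clarification that the paper leaves implicit.
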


\begin{proof}
As two vectors $z_1$ and $z_2$ are perpendicular to $\zc$, operator $\hR_{z_1 \rightarrow z_2}$ satisfies \eqref{eqn:adm} from the direct computation. Note that $\langle z_1 \times z_2, z_i \rangle = 0$ for $i =1,2$. From this motivation, we naturally define
\begin{align}
\label{eqn:1adm}
M := \hR_{z_1 \rightarrow z_2} + a (z_1 \times z_2) (z_1 \times z_2)^T + b(z_1 - \langle z_1, z_2\rangle  z_2 ) (z_1 \times z_2)^T
\end{align}
for any $a, b \in \R$. Then $M$ satisfies \eqref{eqn:adma}. Also, as $z_2$ is perpendicular to both $z_1 \times z_2$ and $(z_1 - \langle z_1,  z_2\rangle  z_2 )$, we conclude \eqref{eqn:admb}.

Conversely, choose any $3\times 3$ matrix $M'$ satisfying \eqref{eqn:adm}. As $z_1$ and $z_2$ are linearly independent, $\{z_2,~ z_1 - \langle z_1, z_2\rangle  z_2, ~\zc\}$ are a basis of $\R^3$. Therefore, there are $a, b,c  \in \R$ such that
\begin{align}
\label{eqn:adm21}
M' \frac{\zc}{\| \zc \|^2} = a  (z_1 \times z_2) + b (z_1 - \langle z_1,  z_2\rangle  z_2 ) + c z_2.
\end{align}
From \eqref{eqn:admb} and $\zc \in T_{z_1} \D$, it follows that $c=0$. Therefore, we conclude that
\begin{align*}
M \zc = M' \zc
\end{align*}
for $M$ given in \eqref{eqn:1adm}. On the other hand, \eqref{eqn:adma} show that
\begin{align}
\label{eqn:adm22}
M(z_2) = M'(z_2)\quad \hbox{ and }\quad M (z_1 - \langle z_1, z_2\rangle  z_2) = M' (z_1 - \langle z_1, z_2\rangle  z_2).
\end{align}
From \eqref{eqn:adm21} and \eqref{eqn:adm22}, we obtain that $M = M'$.
\end{proof}

The set $\A_{z_1 \rightarrow z_2}$  includes the rotation operators $\rot$ and $\hR_{z_1 \rightarrow z_2}$ given in \cite{C-K-S} and \eqref{eqn:hrot}, respectively. Here, if we take the following values in \eqref{eqn:1adm}:
\[a = \frac{1 - \langle z_1, z_2\rangle }{\|\zc\|^2}\quad \mbox{and} \quad b=0,\]
then the matrix coincides with $\rot$, which preserves the modulus of each vectors. See Lemma~2.3 in \cite{C-K-S}. Among several choices in the admissible set in \eqref{eqn:ads}, $\hR_{z_1 \rightarrow z_2}$ can be regarded as the simplest choice such that $a=b=0$ in \eqref{eqn:ads}. Moreover, there is no singularity compared to the previous rotation operator $\ro$. In addition to this simplicity, the rotation operator $\hR_{z_1 \rightarrow z_2}$ also share the following desired transport properties.

\begin{lemma}
\label{lem:hrot}
For $z_1, z_2 \in \D$,  $\hR_{z_1 \rightarrow z_2}$ given in \eqref{eqn:hrot} satisfies \eqref{eqn:adm}. Furthermore, we have
\begin{align}
\label{eqn:2hrot}
\hR_{z_1 \rightarrow z_2}^T = \hR_{z_2\rightarrow z_1}
\end{align}
and
\begin{align*}
\hR_{z_1 \rightarrow z_2}^T \hR_{z_1 \rightarrow z_2} (z_1 ) = z_1, \quad  \hR_{z_1 \rightarrow z_2}^T \hR_{z_1 \rightarrow z_2} (z_2) = z_2.
\end{align*}
\end{lemma}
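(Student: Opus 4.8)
The plan is to prove every assertion by direct computation from the definition $\hR_{z_1\rightarrow z_2} = \langle z_1,z_2\rangle I + z_2 z_1^T - z_1 z_2^T$, using repeatedly the elementary identity $(ab^T)w = \langle b,w\rangle\, a$ for column vectors $a,b,w$ together with the normalization $\|z_1\| = \|z_2\| = 1$. I note in advance that none of the steps uses linear independence of $z_1$ and $z_2$, so the lemma holds for all $z_1,z_2\in\D$, unlike the preceding proposition; the only mild care needed is bookkeeping of the rank-one terms $z_2 z_1^T$ and $z_1 z_2^T$.

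First I would verify \eqref{eqn:adm}. Applying $\hR_{z_1\rightarrow z_2}$ to $z_1$ gives $\langle z_1,z_2\rangle z_1 + \|z_1\|^2 z_2 - \langle z_1,z_2\rangle z_1 = z_2$, and applying it to $z_2$ gives $\langle z_1,z_2\rangle z_2 + \langle z_1,z_2\rangle z_2 - \|z_2\|^2 z_1 = 2\langle z_1,z_2\rangle z_2 - z_1$, which is \eqref{eqn:adma}. For \eqref{eqn:admb}, let $v\in T_{z_1}\D$, i.e. $\langle v,z_1\rangle = 0$; then $\hR_{z_1\rightarrow z_2}v = \langle z_1,z_2\rangle v - \langle z_2,v\rangle z_1$, so $\langle \hR_{z_1\rightarrow z_2}v, z_2\rangle = \langle z_1,z_2\rangle\langle v,z_2\rangle - \langle z_2,v\rangle\langle z_1,z_2\rangle = 0$. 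Next, since $(z_2 z_1^T)^T = z_1 z_2^T$, the transpose is $\hR_{z_1\rightarrow z_2}^T = \langle z_1,z_2\rangle I + z_1 z_2^T - z_2 z_1^T$, which is precisely $\hR_{z_2\rightarrow z_1}$ because $\langle z_1,z_2\rangle = \langle z_2,z_1\rangle$; this proves \eqref{eqn:2hrot}.

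Finally, the two ``near-isometry'' identities follow by chaining the facts just proved, the key observation being that $\hR_{z_2\rightarrow z_1}$ also satisfies \eqref{eqn:adm} by the symmetry of the definition in its two arguments, so \eqref{eqn:adma} may be invoked with the roles of $z_1$ and $z_2$ swapped. Using \eqref{eqn:2hrot} and then \eqref{eqn:adma} for $\hR_{z_2\rightarrow z_1}$, I get $\hR_{z_1\rightarrow z_2}^T\hR_{z_1\rightarrow z_2}(z_1) = \hR_{z_2\rightarrow z_1}(z_2) = z_1$; and likewise $\hR_{z_1\rightarrow z_2}^T\hR_{z_1\rightarrow z_2}(z_2) = \hR_{z_2\rightarrow z_1}\big(2\langle z_1,z_2\rangle z_2 - z_1\big) = 2\langle z_1,z_2\rangle z_1 - \big(2\langle z_1,z_2\rangle z_1 - z_2\big) = z_2$, where the last step again uses both relations in \eqref{eqn:adma} for $\hR_{z_2\rightarrow z_1}$. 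There is no genuine obstacle in this lemma; it is entirely a matter of organizing these short computations.
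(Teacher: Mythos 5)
Your proposal is correct and follows essentially the same route as the paper: direct computation of \eqref{eqn:adm} from the rank-one structure of $\hR_{z_1\rightarrow z_2}$, transposition term by term to get \eqref{eqn:2hrot}, and then chaining \eqref{eqn:adma} (with the roles of $z_1,z_2$ swapped) to obtain the two composition identities. You simply write out the direct computations that the paper leaves implicit; there is no substantive difference.
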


\begin{proof}
As two vectors $z_1$ and $z_2$ are perpendicular to $\zc$, the properties in  \eqref{eqn:adm} follow from the direct computation. Also, since the transpose is the linear operator, we have
\begin{align*}
\hR_{z_1 \rightarrow z_2}^T &= \langle z_1, z_2\rangle  I - z_2 z_1^T + z_1 z_2^T,
\end{align*}
and we conclude \eqref{eqn:2hrot}. From \eqref{eqn:adm} and \eqref{eqn:2hrot}, it holds that
\begin{align*}
\hR_{z_1 \rightarrow z_2}^T \hR_{z_1 \rightarrow z_2} (z_1 ) = \hR_{z_1 \rightarrow z_2}^T (z_2) = z_1
\end{align*}
and
\begin{align*}
\hR_{z_1 \rightarrow z_2}^T \hR_{z_1 \rightarrow z_2} (z_2) = \hR_{z_1 \rightarrow z_2}^T(2  \langle z_1, z_2\rangle  z_2 - z_1) = 2\langle z_1, z_2\rangle  z_1 - (2\langle z_1, z_2\rangle z_1 - z_2) = z_2.
\end{align*}
\end{proof}

While the two operators $\rot$ and $\hR_{z_1 \rightarrow z_2}$ coincide on the $(z_1,z_2)$-plane from Lemma~\ref{lem:hrot}, the following lemma gives us one difference between the two operators. We can show that $\hro$ gives a map between two tangent spaces although the operator is not a bijection if $\langle z_1, z_2\rangle  = 0$.

\begin{lemma}
$\hR_{z_1 \rightarrow z_2} |_{T_{z_1} \D}$ is a map from $T_{z_1} \D$ to $T_{z_2} \D$. Furthermore, if $\langle z_1, z_2\rangle  \neq 0$, then $\hR_{z_1 \rightarrow z_2} |_{T_{z_1} \D}$ is a bijection from $T_{z_1} \D$ to $T_{z_2} \D$.
\end{lemma}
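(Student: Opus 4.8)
The plan is to establish the two assertions separately: the first is immediate from properties already recorded for $\hR_{z_1 \rightarrow z_2}$, while the second reduces to inspecting the restriction of $\hR_{z_1 \rightarrow z_2}$ relative to a convenient pair of orthogonal bases of the two tangent planes. For the first claim I would simply recall from Lemma~\ref{lem:hrot} that $\hR_{z_1 \rightarrow z_2}$ satisfies the admissibility conditions \eqref{eqn:adm}; in particular \eqref{eqn:admb} asserts exactly that $\langle \hR_{z_1 \rightarrow z_2} v, z_2\rangle = 0$ for every $v \in T_{z_1}\D$, so that $\hR_{z_1 \rightarrow z_2}(T_{z_1}\D) \subseteq T_{z_2}\D$. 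If a self-contained check is preferred, expanding $\langle \hR_{z_1 \rightarrow z_2}v, z_2\rangle$ and using $\|z_2\|^2 = 1$ collapses the expression to $\langle z_1, v\rangle$, which vanishes precisely because $v \in T_{z_1}\D$.

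For the bijectivity I would first record that, since $\D \subset \R^3$, both $T_{z_1}\D$ and $T_{z_2}\D$ are two-dimensional, so it suffices to verify injectivity. The case $z_1 = \pm z_2$ is handled by a one-line substitution into \eqref{eqn:hrot}, which gives $\hR_{z_1 \rightarrow z_2} = \langle z_1, z_2\rangle I$; this is a bijection of $T_{z_1}\D = T_{z_2}\D$, and here $\langle z_1, z_2\rangle = \pm 1 \neq 0$. Assume now that $z_1$ and $z_2$ are linearly independent and set
\[
n := z_1 \times z_2, \qquad u_1 := z_2 - \langle z_1, z_2\rangle z_1, \qquad u_2 := z_1 - \langle z_1, z_2\rangle z_2 .
\]
I would check that $\{n, u_1\}$ and $\{n, u_2\}$ are orthogonal bases of $T_{z_1}\D$ and $T_{z_2}\D$ respectively: $n$ is nonzero and perpendicular to both $z_1$ and $z_2$, the vectors $u_1, u_2$ are nonzero by linear independence with $u_1 \perp z_1$ and $u_2 \perp z_2$, and $\langle n, u_1\rangle = \langle n, u_2\rangle = 0$. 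Then, using $z_1^T n = z_2^T n = 0$ together with the identities $\hR_{z_1 \rightarrow z_2} z_1 = z_2$ and $\hR_{z_1 \rightarrow z_2} z_2 = 2\langle z_1, z_2\rangle z_2 - z_1$ from \eqref{eqn:adma}, one obtains
\[
\hR_{z_1 \rightarrow z_2}(n) = \langle z_1, z_2\rangle\, n, \qquad \hR_{z_1 \rightarrow z_2}(u_1) = \hR_{z_1 \rightarrow z_2}(z_2) - \langle z_1, z_2\rangle\, \hR_{z_1 \rightarrow z_2}(z_1) = -\,u_2 .
\]
Hence the matrix of $\hR_{z_1 \rightarrow z_2}|_{T_{z_1}\D}$ in these bases is $\mathrm{diag}(\langle z_1, z_2\rangle, -1)$, with determinant $-\langle z_1, z_2\rangle$, which is nonzero exactly when $\langle z_1, z_2\rangle \neq 0$; in that case the restriction is an isomorphism onto $T_{z_2}\D$, giving the claimed bijectivity.

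The one point that deserves a moment of care---rather than a genuine obstacle---is verifying that $\{n, u_1\}$ and $\{n, u_2\}$ are indeed bases; once one observes that each is an orthogonal pair of nonzero vectors lying in a two-dimensional space, this is a pure dimension count, and everything else is just bookkeeping from substituting \eqref{eqn:adma} into the definitions. As a byproduct, the same computation shows that when $\langle z_1, z_2\rangle = 0$ the vector $n$ lies in the kernel of $\hR_{z_1 \rightarrow z_2}|_{T_{z_1}\D}$, so the hypothesis $\langle z_1, z_2\rangle \neq 0$ is sharp.
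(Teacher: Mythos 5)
Your proof is correct and follows essentially the same route as the paper: the first claim via $\langle \hR_{z_1\rightarrow z_2}v, z_2\rangle = \langle v, z_1\rangle$, and the second via the observation that $z_1\times z_2$ is an eigenvector with eigenvalue $\langle z_1, z_2\rangle$ together with the admissibility identities \eqref{eqn:adma}. Your version is slightly more explicit than the paper's (which just says ``combining this with \eqref{eqn:adma}, we conclude surjectivity''), since you exhibit the orthogonal bases $\{n,u_1\}$, $\{n,u_2\}$ and compute the determinant $-\langle z_1,z_2\rangle$ outright; this also yields the sharpness remark about the kernel when $\langle z_1,z_2\rangle=0$.
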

\begin{proof}
As $\D$ is a unit sphere, $v \in T_{y} \D$~ if and only if~ $\langle v, y\rangle  = 0$ for any $y \in \R^3$.
Thus, we have
\begin{align}
\label{eqn:htan12}
\langle v, z_1\rangle  = 0\quad \hbox{ for any vector } v \in T_{z_1} \D.
\end{align}
From \eqref{eqn:adma} and \eqref{eqn:2hrot}, it holds that for any $v \in \R^3$,
\begin{align}
\label{eqn:htan11}
(\hR_{z_1 \rightarrow z_2} v) \cdot z_2 = v^T \hR_{z_1 \rightarrow z_2}^T z_2 = v^T \hR_{z_1 \rightarrow z_2}r z_2 = v^T z_1=\langle v, z_1\rangle.
\end{align}
By \eqref{eqn:htan12} and \eqref{eqn:htan11}, we conclude that
\begin{align*}
(\hR_{z_1 \rightarrow z_2} v) \cdot z_2 = 0 \hbox{ and thus } \hR_{z_1 \rightarrow z_2} v \in T_{z_2} \D \hbox{ for any vector } v \in T_{z_1} \D.
\end{align*}

We now assume that $\langle z_1, z_2\rangle  \neq 0$ and show that $\hR_{z_1 \rightarrow z_2} |_{T_{z_1} \D}$ is bijective between two tangent spaces. First, if $z_1 = z_2$ or $z_1 = -z_2$, we get $\hR_{z_1 \rightarrow z_2} = I$ and $\hR_{z_1 \rightarrow z_2} = -I$. If not, $z_1$ and $z_2$ are linearly independent. From the assumption,
$\hR_{z_1 \rightarrow z_2}(z_1 \times z_2) = \langle z_1, z_2\rangle  (z_1 \times z_2)$ is a nonzero vector. Combining this with \eqref{eqn:adma}, we conclude that $\hR_{z_1 \rightarrow z_2} |_{T_{z_1} \D}$ is surjective in $T_{z_2} \D$ and thus the determinant of $\hR_{z_1 \rightarrow z_2}$ is nonzero. As the inverse function of $\hR_{z_1 \rightarrow z_2}$ exists, we conclude that this lemma holds.
\end{proof}

\bibliographystyle{amsplain}

\end{document}